\newtheorem{theorem}{Theorem}[section]
\newtheorem{lemma}[theorem]{Lemma}
\newtheorem{proposition}[theorem]{Proposition}
\newtheorem{claim}[theorem]{Claim}
\newtheorem{subclaim}[theorem]{Subclaim}
\newtheorem{corollary}[theorem]{Corollary}
\newtheorem{question}[theorem]{Question}
\newtheorem{fact}{Fact}
\theoremstyle{definition}
\newtheorem{definition}[theorem]{Definition}
\newcommand\ZZ{\mathbb{Z}}
\begin{document}
\title{Nonvanishing higher derived limits without $w\diamondsuit_{\omega_1}$}
\author{Nathaniel Bannister}
\address{Carnegie Mellon University}
\begin{abstract}
    We prove a common refinement of theorems of Bergfalk and of Casarosa and Lambie-Hanson, showing that under certain hypotheses, the higher derived limits of a certain inverse system of abelian groups $\mathbf{A}$ do not vanish. 
    The refined theorem has a number of interesting corollaries, including the nonvanishing of the second derived limit of $\mathbf{A}$ in many of the common models of set theory of the reals and in the Mitchell model. 
    In particular, we disprove a conjecture of Bergfalk, Hru\v{s}\'ak, and Lambie-Hanson that higher derived limits of $\mathbf{A}$ vanish in the Miller model. 
\end{abstract}
\maketitle

\section{Introduction}
The past decade has seen considerable development in the set-theoretic analysis of the higher derived functors of the inverse limit functor. 
Derived limits seek to measure and correct the failures of the inverse limit functor to preserve exact sequences. 
Originating from the theory of \emph{strong homology}, the derived functors of the inverse system $\mathbf{A}$ and its relatives are of particular interest. 
$\mathbf{A}$ is indexed over $\omega^\omega$ (the set of all functions from the natural numbers to the natural numbers) with respect to the ordering of coordinatewise domination; for each index $x$ the group $\mathbf{A}_x$ is defined by
\[\mathbf{A}_x=\bigoplus_{i<\omega}\ZZ^{x(i)}\]
and if $x\leq y$ the map from $\mathbf{A}_y$ to $\mathbf{A}_x$ is the canonical projection map.
We often view $\mathbf{A}_x$ as the group of finitely supported functions from the area under the graph of $x$ into $\mathbb{Z}$. 
$\mathbf{A}$ was first defined and used by Marde\v{s}i\'c and Prasolov in \cite{SHINA} where they showed that if strong homology is \emph{additive} in the sense that it maps disjoint unions to direct sums, then $\lim^n\mathbf{A}=0$ for all $0<n<\omega$. 
More recently, Clausen and Sholze have shown that the vanishing of derived limits of $\mathbf{A}$ has consequences for the vanishing of $\operatorname{Ext}$ groups in their theory of condensed mathematics (see, for instance, \cite[Lecture 4]{Stacks}). 

Many of the early investigations in the late 80s and early 90s into the derived limits of $\mathbf{A}$ focused solely on the \emph{first} derived limit $\lim^1\mathbf{A}$, beginning with \cite{SHINA} where Marde\v{s}i\'c and Prasolov show that the continuum hypothesis implies $\lim^1\mathbf{A}\neq0$. 
In \cite{SHDLST}, Bergfalk begins the investigation into the consistent (non) vanishing of $\lim^n\mathbf{A}$ for $n\geq 2$ by showing that the Proper Forcing Axiom implies that $\lim^2\mathbf{A}\neq0$. 
More precisely, he shows that if $\mathfrak{b}=\mathfrak{d}=\aleph_2$ and $\diamondsuit(S^2_1)$ then $\lim^2\mathbf{A}\neq0$.
In the decade since, a number of results on the vanishing and nonvanishing of the higher derived limits of $\mathbf{A}$ have appeared, including:
\begin{itemize}
    \item In \cite{SVHDL}, Bergfalk and Lambie-Hanson show that consistently from a weakly compact cardinal, $\lim^n\mathbf{A}=0$ for all $n>0$.
    \item In \cite{SVHDLWLC}, Bergfalk, Lambie-Hanson, and Hru\v{s}\'ak remove the large cardinal hypothesis of \cite{SVHDL}.
    \item In \cite{NVHDL}, Veli\v{c}kovi\'c and Vignati prove that for each $1\leq n<\omega$, it is consistent that $\lim^n\mathbf{A}\neq0$. 
    More precisely, they show that if $\mathfrak{b}=\mathfrak{d}=\aleph_n$ and $w\diamondsuit(S^{k+1}_k)$ holds for all $k<n$ then $\lim^n\mathbf{A}\neq0$. 
    \item In \cite{NVHDLWS}, Casarosa refines the Veli\v{c}kovi\'c and Vignati result by showing that with the same weak diamond hypotheses, a weakening of $\mathfrak{b}=\mathfrak{d}$ suffices. 
    \item In \cite{SNVHDL}, Casarosa and Lambie-Hanson remove the hypothesis of $\mathfrak{b}=\mathfrak{d}$ entirely to show that if $\mathfrak{d}=\aleph_2$ and $w\diamondsuit(S^{k+1}_k)$ holds for all $k<n$ then $\lim^n\mathbf{A}\neq0$. 
    They then obtain nonvanishing of derived limits from square sequences to prove that consistently, $\lim^n\mathbf{A}\neq$ for all $2\leq n<\omega$. 
\end{itemize}
The goal of this paper is to continue this line of research. 
The motivating question is the following:
\begin{question}
    What is the least value of $2^{\aleph_0}$ compatible with the assertion that $\lim^n\mathbf{A}=0$ for all $n>0$?
\end{question}
This question remains open. The smallest known upper bound is $\aleph_{\omega+1}$, obtained by Bergfalk, Hru\v{s}\'ak, and Lambie-Hanson in \cite{SVHDLWLC} and the largest known lower bound is $\aleph_2$ as the Marde\v{s}i\'c and Prasolov show in \cite{SHINA} that the continuum hypothesis implies $\lim^1\mathbf{A}\neq0$. 
Dow, Simon, and Vaughan show in \cite{SHPFA} that actually a weakening of $\mathfrak{d}=\aleph_1$ suffices. 

We show that the second derived limit of $\mathbf{A}$ does not vanish in several proposed candidate models, including the Miller model (see \cite[Question 7.4]{SVHDLWLC}) and the Mitchell model (see \cite[Question 7.7]{ADLCM})
To this end, we prove a common generalization of \cite[Theorem 4.1]{SHDLST} and \cite[Theorem A(2)]{SNVHDL}; note the ``in particular'' conclusion holds since $\diamondsuit(S^2_1)$ implies $\mathfrak{d}\leq 2^{\aleph_0}\leq\aleph_2$:
\begin{theorem} \label{intro_main}
    If $\mathfrak{d}=\aleph_n$ and $w\diamondsuit(S^{k+1}_k)$ holds for $1\leq k<n$ then $\lim^n\mathbf{A}\neq0$. 
    In particular, $\diamondsuit(S^2_1)$ implies either $\lim^1\mathbf{A}\neq0$ or $\lim^2\mathbf{A}\neq0$. 
\end{theorem}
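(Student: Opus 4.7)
The plan is to combine the Veli\v{c}kovi\'c--Vignati construction of nontrivial coherent $n$-cocycles from \cite{NVHDL} with the technique of Casarosa and Lambie-Hanson \cite{SNVHDL} for dispensing with the hypothesis $\mathfrak{b}=\mathfrak{d}$. Using $\mathfrak{d}=\aleph_n$, fix a $\leq^*$-cofinal sequence $\vec{x}=\langle x_\alpha : \alpha<\omega_n\rangle$ in $\omega^\omega$. It suffices to construct a family $\Phi=\langle \phi_{\vec\alpha} : \vec\alpha\in [\omega_n]^n\rangle$ of finitely supported integer-valued functions, with $\phi_{\vec\alpha}$ supported under the pointwise minimum of $\{x_{\alpha_i} : i<n\}$, satisfying the alternating-sum cocycle condition on $(n+1)$-tuples while admitting no $(n-1)$-cochain trivialization modulo finite support.

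I would build $\Phi$ by recursion on the largest coordinate $\alpha<\omega_n$. At each limit ordinal $\alpha$ of cofinality $\aleph_k$ for some $1\leq k<n$, the weak diamond $w\diamondsuit(S^{k+1}_k)$, given a suitable coding of the partial construction, predicts a potential $(n-1)$-cochain $\Psi\upharpoonright\alpha$. When $\alpha\in S^{k+1}_k$ and the diamond fires on the prediction, extend $\Phi$ so that $\phi_{\vec\alpha}$ disagrees in an unbounded way with $(\delta\Psi)_{\vec\alpha}$ for some $\vec\alpha$ ending at $\alpha$, thereby defeating $\Psi$ as a would-be trivialization. Since any alleged trivializer $\Psi$ of $\Phi$ must be caught stationarily often at each cofinality level below $\aleph_n$, running the diagonalization against $w\diamondsuit(S^{k+1}_k)$ for each $1\leq k<n$ simultaneously produces a nontrivial coherent $\Phi$.

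The main obstacle is maintaining the alternating-sum cocycle condition throughout the construction without the benefit of a scale. When $\mathfrak{b}<\mathfrak{d}$, the family $\vec{x}$ need not be dominating at every level: a long initial segment $\langle x_\xi : \xi<\beta\rangle$ may fail to be $\leq^*$-bounded by any $x_\gamma$. The Casarosa--Lambie-Hanson innovation for $n=2$ is to choose the support of each $\phi_{\alpha\beta}$ using only finitely many coordinates already in play, rather than relying on a dominating function. Adapting this idea to the $n$-dimensional setting, so that the finitely many coordinates fixed at any stage remain compatible with both the coherence and diagonalization demands imposed by $w\diamondsuit(S^{k+1}_k)$ for every $1\leq k<n$, is the key technical step.

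For the ``in particular'' clause: $\diamondsuit(S^2_1)$ implies $2^{\aleph_0}\leq\aleph_2$, so $\mathfrak{d}\in\{\aleph_1,\aleph_2\}$. If $\mathfrak{d}=\aleph_1$ then $\lim^1\mathbf{A}\neq 0$ by the Marde\v{s}i\'c--Prasolov argument \cite{SHINA} (or its refinement by Dow--Simon--Vaughan \cite{SHPFA}); if $\mathfrak{d}=\aleph_2$ then applying the main result with $n=2$ (using $\diamondsuit\Rightarrow w\diamondsuit$) gives $\lim^2\mathbf{A}\neq 0$.
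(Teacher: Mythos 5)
Your high-level plan (a recursive construction plus weak-diamond diagonalization, with attention to the failure of $\mathfrak{b}=\mathfrak{d}$) points in the right direction, but there are genuine gaps. First, the hypotheses $w\diamondsuit(S^{k+1}_k)$ for $k+1<n$ cannot ``fire'' at limit ordinals of $\omega_n$ of cofinality $\aleph_k$ as you describe: $w\diamondsuit(S^{k+1}_k)$ is a statement about colorings of $2^{<\omega_{k+1}}$ and stationary subsets of $\omega_{k+1}$, and a partial construction of length $\alpha<\omega_n$ with $\alpha\geq\omega_{k+1}$ is too large to be coded as an element of $2^{<\omega_{k+1}}$. The paper instead consumes the weak diamonds in a nested induction on dimension (Lemma \ref{last} via Lemma \ref{Jeff}): only $w\diamondsuit(S^n_{n-1})$ acts at the top level, while the lower ones are used to produce, for each intermediate model of size $\aleph_{n-1}$, an $(n-1)$-coherent family nontrivial \emph{below} $y\wedge g$ for a suitably undominated $y$. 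Second, your diagonalization step is not well defined. A weak diamond yields only a single bit at each guessing stage, so one must prepare \emph{two} candidate coherent extensions (in the paper, $d\Psi$ versus $d[\Psi+\Theta]$) such that no partial trivialization extends to a trivialization of both; making $\phi_{\vec\alpha}$ ``disagree unboundedly with $(\delta\Psi)_{\vec\alpha}$'' does not achieve this, and indeed $(\delta\Psi)_{\vec\alpha}$ is not even determined by the prediction, since for $\vec\alpha$ ending at $\alpha$ all but one of its terms involve $\Psi$ at tuples containing the new index, where a would-be trivialization of the final family is unconstrained. The ingredient that makes the two-extension trick work --- and which your sketch never identifies --- is a lower-dimensional coherent family $\Theta$ nontrivial below $y\wedge g$, obtained at the base case from Todor\v{c}evi\'c's theorem on strictly $\subset^*$-increasing $\omega_1$-chains (Lemma \ref{nontriv_below}) and inductively via the relativized notion of triviality below $g$ (Definition \ref{triv_below_def}).

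Moreover, you correctly flag the $\mathfrak{b}<\mathfrak{d}$ issue but explicitly defer its resolution as ``the key technical step,'' which is exactly where the work lies: the paper resolves it not by controlling supports with finitely many coordinates, but by relativizing nontriviality to the cone below $g$ and proving (Lemma \ref{non}, following \cite[Proposition 4.3]{SNVHDL}) that $\{y\wedge g\mid y\in M\cap\omega^\omega\}$ has $<^*$-cofinality $|M|$ whenever $g$ is weakly increasing and undominated over $M$; this is what allows the induction to run without a scale. Your treatment of the ``in particular'' clause ($\diamondsuit(S^2_1)$ gives $\mathfrak{d}\leq 2^{\aleph_0}\leq\aleph_2$, with the case $\mathfrak{d}=\aleph_1$ handled by Dow--Simon--Vaughan and the case $\mathfrak{d}=\aleph_2$ by the $n=2$ instance, since $\diamondsuit$ implies $w\diamondsuit$) is correct and matches the paper.
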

We then complete the analysis of derived limits in the Mitchell model to show that the nonvanishing derived limits in the Mitchell model coincide with the ones which do not vanish under the Proper Forcing Axiom. 
\begin{theorem} \label{mit_thm_intro}
    In the Mitchell model, $\lim^n\mathbf{A}\neq0$ if and only if $n=0,2$
\end{theorem}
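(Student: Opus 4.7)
The theorem amounts to four independent assertions about the Mitchell model. The $n=0$ case is a ZFC fact: a coherent family for $\mathbf{A}$ corresponds to a function $F\colon\omega\times\omega\to\ZZ$ whose restriction to the region below every $x\in\R$ is finitely supported, which forces $F$ itself to be finitely supported, so $\lim^0\mathbf{A}\cong\bigoplus_{\omega\times\omega}\ZZ\neq 0$. The three remaining claims are model-specific. The vanishing of $\lim^n\mathbf{A}$ for $n\geq 3$ reduces to the cofinality bound $\mathfrak{d}=\aleph_2$ in the Mitchell model via Goblot's vanishing theorem for $n\geq 4$ and the sharper vanishing framework developed in \cite{ADLCM} for $n=3$. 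The vanishing of $\lim^1\mathbf{A}$ follows from the standard trivialization argument for $\R$-indexed coherent families, a step already carried out for the Mitchell extension in \cite{ADLCM} using the generics provided by the Cohen factor of the forcing.

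The principal new content is the nonvanishing of $\lim^2\mathbf{A}$, which I would obtain by applying Theorem~\ref{intro_main} with $n=2$. Two hypotheses must be verified in the Mitchell model: $\mathfrak{d}=\aleph_2$, which is a standard computation for Mitchell's poset, and $w\diamondsuit(S^2_1)$, which is the substantive step. The plan is to factor Mitchell forcing as the composition of a $\sigma$-closed part and an $\aleph_2$-cc part over a ground model of GCH, where $w\diamondsuit(S^2_1)$ holds at the outset. The $\sigma$-closed part adds no new $\omega_1$-sequences and so preserves $w\diamondsuit(S^2_1)$ immediately. For the $\aleph_2$-cc part, a putative predictor witnessing the failure of $w\diamondsuit(S^2_1)$ in the final extension would, by ccc-ness and a name-counting argument, be captured along a club below $\omega_2$; this capture would already exhibit a failure of $w\diamondsuit(S^2_1)$ in an intermediate model, contradicting the ground-model principle.

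The hardest step is the preservation of $w\diamondsuit(S^2_1)$ through the $\aleph_2$-cc component of Mitchell forcing, since this is the stage at which new reals are introduced and weak guessing can in principle be destroyed; the delicacy is that the Mitchell iteration is precisely engineered to thread the needle between various combinatorial principles. If the direct preservation argument proves too fragile, a backup approach is to invoke the large-cardinal hypothesis underlying Mitchell's construction: a suitable elementary embedding of the ground model can be used to lift an $\omega_1$-guessing sequence into the extension, yielding $w\diamondsuit(S^2_1)$ there directly and sidestepping a componentwise preservation analysis entirely.
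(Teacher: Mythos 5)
Your proposal breaks down at both of the model-specific steps that carry the actual content. For the nonvanishing of $\lim^2\mathbf{A}$, the hypothesis you must verify is a guessing principle at the $\omega_2$ \emph{of the Mitchell extension}, which is $\kappa$; the $w\diamondsuit(S^2_1)$ you propose to preserve lives at the ground model's $\omega_2$, which Mitchell forcing collapses to an ordinal of cardinality $\aleph_1$, so no preservation theorem for it can possibly deliver $w\diamondsuit(S^2_1)$ in $V[G]$. (Two further problems with that plan: $\sigma$-closed forcings add no new $\omega$-sequences but certainly can add new $\omega_1$-sequences, and Mitchell forcing is a projection of $\operatorname{Add}(\omega,\kappa)$ times a $\sigma$-closed term forcing rather than a two-step iteration of the shape you describe.) The paper's route is both simpler and stronger: Proposition \ref{diamond} shows that $\diamondsuit^+_\kappa$ holds outright in the extension, by taking $S_\alpha=\mathcal{P}(\alpha)\cap V[G_\alpha]$ and using the $\kappa$-cc together with inaccessibility of $\kappa$ to capture every subset of $\kappa$ on a club of initial segments; this yields full $\diamondsuit(S^2_1)$, and then Theorem \ref{diamond_thm} (which needs only $\diamondsuit(S^2_1)$ and $\mathfrak{d}=\aleph_2$, no weak diamond and no lifting of embeddings) gives $\lim^2\mathbf{A}\neq0$ from an inaccessible alone.

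The vanishing of $\lim^1\mathbf{A}$ is not ``already carried out for the Mitchell extension in \cite{ADLCM}'': that paper concerns the Cohen model, and the present paper quotes its Question 7.7 precisely because the Mitchell model was open; this step is the main new work of Section \ref{Mitchell_section} and is where the weakly compact hypothesis is consumed. The missing ingredients are: (i) Lemma \ref{no_extend}, that a nontrivial $1$-coherent family indexed by a countably $<^*$-directed set does not extend to any of the generic reals of $\operatorname{Add}(\omega,\lambda)$ (proved by a density and counting argument); (ii) the fact that every real of the Mitchell extension is added by the Cohen coordinate; and (iii) the reflection step: for a name $\dot\Phi$ of a $1$-coherent family, a weakly compact embedding $j\colon M\to N$ with critical point $\kappa$ lifts to $j\colon M[G]\to N[G][H]$, so $j(\Phi)$ extends $\Phi$ to reals Cohen-generic over $N[G]$, whence by (i) applied inside $N[G]$ the family $\Phi$ is already trivial there and hence in $V[G]$ (Corollary \ref{lim1mitchell}). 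Your appeal to ``the standard trivialization argument'' supplies none of this. Two smaller corrections: with $\mathfrak{d}=\aleph_2$ one has a cofinal index set of size $\aleph_2<\aleph_3$, so Theorem \ref{gob_vanish} already kills $\lim^n\mathbf{A}$ for every $n\geq3$, including $n=3$, with no need for any external framework; and while $\lim^0\mathbf{A}\neq0$ is trivially true, your identification of it is wrong, since a thread need not have finite support (its support need only meet finitely many columns).
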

An important part of our proof of Theorem \ref{mit_thm_intro} is that Cohen forcing does not trivialize any nontrivial $1$-coherent families. 
We show that several other forcings similarly do not trivialize $1$-coherent families.  

The structure of the paper is as follows: In Section \ref{prelim_section}, we review the preliminary material that we will use repeatedly throughout the paper. 
In Section \ref{diamond_section}, we prove a weak version of Theorem \ref{intro_main} namely:
\begin{theorem} \label{diamond_thm_intro}
    If $\mathfrak{d}=\aleph_2$ and $\diamondsuit(S^2_1)$ holds then $\lim^2\mathbf{A}\neq0$. 
\end{theorem}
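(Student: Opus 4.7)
The plan is to build by recursion on $\alpha<\omega_2$ a $2$-coherent family $(\phi_{\beta,\alpha})_{\beta<\alpha<\omega_2}$ of finitely supported integer-valued functions on subsets of $\omega\times\omega$ which is non-trivial. Via the standard dictionary between such families and $\lim^2\mathbf{A}$ (used throughout \cite{NVHDL,NVHDLWS,SNVHDL}), this yields $\lim^2\mathbf{A}\neq 0$. Recall that $2$-coherence says the cocycle error $\phi_{\beta,\alpha}+\phi_{\alpha,\gamma}-\phi_{\beta,\gamma}$ is finitely supported (on the relevant domain) for all $\beta<\alpha<\gamma$, while non-triviality means that no family $(\psi_\alpha)_{\alpha<\omega_2}$ satisfies $\phi_{\beta,\alpha}=^*\psi_\alpha-\psi_\beta$ for all $\beta<\alpha$.

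First, fix a dominating family $(x_\alpha:\alpha<\omega_2)$ in $\omega^\omega$ witnessing $\mathfrak{d}=\aleph_2$. The $\phi_{\beta,\alpha}$ are then defined by recursion on $\alpha$. Successor stages are immediate. At $\alpha$ with $\operatorname{cf}(\alpha)=\omega$, we fix a cofinal $\omega$-sequence $(\alpha_n)_n\uparrow\alpha$ and define $\phi_{\beta,\alpha}$ by telescoping the previously defined values along that sequence. At $\alpha\in S^2_1$, we use the $\diamondsuit(S^2_1)$-guess to diagonalize: the guess supplies a candidate trivializing family $(\psi^\alpha_\beta)_{\beta<\alpha}$, and we pick $\psi^*_\alpha$ and some $\beta<\alpha$ so that we can force $\phi_{\beta,\alpha}-(\psi^*_\alpha-\psi^\alpha_\beta)$ to have infinite support. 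Since $\diamondsuit$ (as opposed to $w\diamondsuit$) delivers the full candidate rather than only an indicator of agreement, we can read off enough of its behavior to make this diagonalization with a direct, explicit choice rather than the subtler pigeonhole required in \cite{NVHDL,SNVHDL}.

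Verification splits into two standard parts. Coherence is preserved inductively by the explicit telescoping choices at limits, reducing to a finite-support algebra computation. Non-triviality follows by diamond reflection: any putative trivialization $\Psi=(\psi_\alpha)_{\alpha<\omega_2}$ existing in the universe would be guessed by the $\diamondsuit$-sequence on a stationary $S\subseteq S^2_1$, and at each $\alpha\in S$ the diagonalization step exhibits $\beta<\alpha$ with $\phi_{\beta,\alpha}\neq^*\psi_\alpha-\psi_\beta$, contradicting the assumption that $\Psi$ trivializes $\Phi$.

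The main obstacle, and the reason this does not immediately reduce to Bergfalk's argument in \cite[Theorem 4.1]{SHDLST}, is that $\mathfrak{d}=\aleph_2$ allows $\mathfrak{b}=\aleph_1$, so the dominating family cannot in general be arranged $\leq^*$-increasing; at $\operatorname{cf}(\alpha)=\omega$ stages, $x_\alpha$ may fail to dominate its predecessors $(x_{\alpha_n})_n$, breaking the naive telescoping. The fix, following the approach of \cite{SNVHDL}, is to formulate the coherence condition with finite support measured on common domains $\operatorname{dom}(x_\beta)\cap\operatorname{dom}(x_{\alpha_n})$ rather than uniformly on $\operatorname{dom}(x_\alpha)$; telescoping is then meaningful at $\omega$-cofinal limits even without a uniform bound, and the weakened coherence still suffices to represent a non-zero element of $\lim^2\mathbf{A}$. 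Threading this refined coherence notion uniformly through the successor, $\omega$-cofinal, and diamond-diagonalization steps is where the technical care is concentrated.
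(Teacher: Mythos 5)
There is a genuine gap, and it sits exactly at the diamond-diagonalization step, which is the heart of the theorem. At a stage $\alpha\in S^2_1$ the $\diamondsuit$-sequence guesses only $\Psi\upharpoonright\alpha$, i.e.\ the values $(\psi^\alpha_\beta)_{\beta<\alpha}$; the value $\psi_\alpha$ of a putative trivialization at the new index is \emph{not} guessed. So when you define $\phi_{\beta,\alpha}$ you must arrange that \emph{no} function $h$ whatsoever satisfies $\phi_{\beta,\alpha}=^*h-\psi^\alpha_\beta$ for all $\beta<\alpha$; your plan of ``picking $\psi^*_\alpha$ and some $\beta<\alpha$'' only defeats one candidate at one index, and an adversarial trivialization is free to choose a different $\psi_\alpha$ absorbing your perturbation. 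Moreover, the perturbation you are allowed to add is constrained: since the extension must remain $2$-coherent with the previously built family, and $\Psi^\alpha$ trivializes that family, the $1$-family $(\phi_{\beta,\alpha}+\psi^\alpha_\beta)_{\beta<\alpha}$ (on the domains $I_{x_\beta\wedge x_\alpha}$) is automatically $1$-coherent, and defeating every $h$ means precisely that this $1$-coherent family is nontrivial below $x_\alpha$. Producing such a family is a nontrivial ZFC fact, not a ``direct, explicit choice'': it is Lemma \ref{nontriv_below}, obtained from Todor\v{c}evi\'c's theorem on $\subset^*$-increasing $\omega_1$-chains (Theorem \ref{walks_thm}), and it needs two structural inputs your setup does not provide: a $<^*$-increasing chain of order type $\omega_1$ cofinal in the index set below $\alpha$ (the paper arranges this with an internally approaching chain of elementary submodels; an arbitrary enumeration of a dominating family in order type $\omega_2$ need not have this, since $\mathfrak{b}$ may be $\aleph_1$), and a function $x_\alpha$ in the next block of indices that is not $<^*$-dominated by any earlier index (this is where $\mathfrak{d}=\aleph_2$ enters). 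Without this ingredient the diagonalization step fails, and this ingredient is exactly what separates the present theorem from Bergfalk's argument under $\mathfrak{b}=\mathfrak{d}=\aleph_2$, where a scale supplies both inputs for free.

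You have also misplaced the technical difficulty at the $\omega$-cofinal limit stages. No telescoping along a cofinal $\omega$-sequence is needed anywhere: at every stage where you are not diagonalizing (in particular at all stages of countable cofinality, and at guessing stages where the guess fails to trivialize), the family built so far is indexed by a set of size $\leq\aleph_1$, hence trivial by Goblot's theorem (Theorem \ref{gob_vanish}), and one extends it by the coboundary of a trivialization (Proposition \ref{triv_ext_prop}). Measuring agreement on common domains $I_{x_\beta\wedge x_\gamma}$ is already the standard definition of coherence here, not a fix. Finally, two small inaccuracies: the functions $\phi_{\beta,\alpha}$ are arbitrary $\mathbb{Z}$-valued functions on $I_{x_\beta\wedge x_\alpha}$, not finitely supported ones; and the final reflection argument is fine as stated, but it only works once the stage-$\alpha$ construction defeats \emph{all} extensions of the guessed family, which is the point above.
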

In Section \ref{certain_models_section}, we analyze a number of models that realize the hypotheses of Theorem \ref{diamond_thm_intro}, including the Miller and Mitchell models. 
In Section \ref{wd_section}, we give a full proof of Theorem \ref{intro_main} and outline a new proof of \cite[Theorem A(1)]{SNVHDL}:
\begin{theorem}[Casarosa, Lambie-Hanson {\cite[Theorem A(1)]{SNVHDL}}]
    Suppose $\mathfrak{d}=\aleph_n$ and $\langle G_i\mid i<\omega_n\rangle$ are nonzero abelian groups. 
    There is a nontrivial $\bigoplus_{i<\omega_n}G_i$-valued $n$-coherent family. 
\end{theorem}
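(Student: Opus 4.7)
The plan is to construct the family directly, using a $\leq^*$-dominating scale of length $\omega_n$ together with the extra "width" afforded by the direct sum $\bigoplus_{i<\omega_n} G_i$. The heuristic is that Theorem \ref{intro_main} uses $w\diamondsuit(S^{k+1}_k)$ to prevent potential trivializations over the relatively thin group $\mathbf{A}_x$; when the target group has a distinct coordinate for each stage of the scale, these obstructions can instead be placed by hand, one per coordinate, and a dimension-counting argument replaces the guessing principle.

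Concretely, I would fix a scale $\vec{x}=\langle x_\alpha:\alpha<\omega_n\rangle$ dominating $(\omega^\omega,\leq^*)$ together with distinguished nonzero elements $g_\alpha\in G_\alpha$, and proceed by induction on $n$. For the base case $n=1$, a standard transfinite construction along the scale $\vec{x}$ of length $\omega_1$ produces a $G_0$-valued (indeed a single-coordinate) nontrivial $1$-coherent family, essentially the classical Mardešić--Prasolov construction. For the inductive step, I would take an inductively produced nontrivial $(n{-}1)$-coherent family valued in $\bigoplus_{i<\omega_{n-1}} G_i$ along an $\omega_{n-1}$-cofinal sub-scale, and splice $\omega_n$-many such nontrivial sub-families together along $\vec{x}$, at each stage $\alpha<\omega_n$ placing the distinguished element $g_\alpha$ into the $\alpha$-th direct-sum coordinate as the "signature" of that stage. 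The coherence identities are verified by observing that finite modifications carried out at each stage can be localized into the coordinate of that stage, so they cannot accumulate into an infinite discrepancy elsewhere.

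Nontriviality follows by contradiction. If $\vec{\psi}=\langle \psi_{\vec{\beta}}\rangle$ trivialized the constructed family, then projecting onto coordinate $\alpha$ of the direct sum would produce a $G_\alpha$-valued trivialization of the sub-family placed at stage $\alpha$, contradicting the inductive nontriviality (for cofinally many $\alpha$). Since each $\psi_{\vec{\beta}}$ has finite support in $\bigoplus_{i<\omega_n}G_i$, a pigeonhole argument along the $\omega_n$-length scale selects such an $\alpha$ outside the supports used by the relevant $\psi_{\vec{\beta}}$, producing the desired contradiction.

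The main obstacle I anticipate is the bookkeeping in the splicing step: one must verify the full $n$-cocycle identity for tuples $\vec{\alpha}$ whose indices straddle different stages of the recursion, and ensure that the chosen "signature" at stage $\alpha$ does not disturb the coherence already arranged below $\alpha$. This is exactly the place where the direct-sum structure is essential, since placing the new signature into a previously unused coordinate makes the interaction with past stages vanish on the nose rather than only up to finite support. Matching the domain of the inductive $(n{-}1)$-coherent family with the stage-$\alpha$ slice of the $n$-coherent family under construction will also require a small reindexing argument to align the relevant sub-scales.
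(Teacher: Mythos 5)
Your core instinct---use fresh direct-sum coordinates at each stage as the obstruction, and replace the guessing principle by a support argument---is indeed the paper's key trick (Lemma \ref{Jeff2}). But as written the construction has two genuine gaps. First, $\mathfrak{d}=\aleph_n$ alone does not provide a $\leq^*$-increasing dominating scale of length $\omega_n$ (that is exactly the hypothesis $\mathfrak{b}=\mathfrak{d}$ that this theorem is meant to avoid), so neither your top-level scale nor your ``$\omega_{n-1}$-cofinal sub-scales'' exist in general. Relatedly, you cannot obtain the stage-$\alpha$ obstruction by ``inductively applying the theorem,'' since $\mathfrak{d}=\aleph_{n-1}$ fails in the universe. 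What the induction actually needs is a \emph{relativized} notion: an $(n-1)$-coherent family indexed by the $\alpha$-th piece $M_\alpha\cap\omega^\omega$ of a filtration by elementary submodels that is nontrivial \emph{below} $y_\alpha\wedge g$ for some $y_\alpha\in M_{\alpha+1}$ not dominated over $M_\alpha$ (Definition \ref{triv_below_def}); supplying such families is the content of Lemma \ref{last}, whose successor step rests on the cofinality computation of Lemma \ref{non} and whose base case is Todor\v{c}evi\'c's theorem via Lemma \ref{nontriv_below}. Without this relativization, the spliced sub-families could be trivial over their index sets, or trivialized by functions living under reals outside the submodel, and your verification of nontriviality has nothing to bite on. Also, the coherent extension at each stage is not mere bookkeeping: one first trivializes the family built so far using Theorem \ref{gob_vanish} (this is where $|A_\alpha|<\aleph_n$ enters) and defines the new values as the coboundary $d(\Psi^\alpha+\Theta^\alpha)$; it is this coboundary trick, not localization of finite modifications, that preserves coherence while inserting the obstruction.

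Second, a single ``signature'' element $g_\alpha$ in the $\alpha$-th coordinate cannot carry the obstruction. In Lemma \ref{Jeff2} each stage $\alpha$ receives an entire block of fresh coordinates (of order type $\omega_{n-1}$ at the top level), which hosts the whole nontrivial lower-dimensional family $\Theta^\alpha$, while everything built before stage $\alpha$ takes values in earlier coordinates; your projection argument (``projecting onto coordinate $\alpha$ gives a trivialization of the stage-$\alpha$ sub-family'') only makes sense if the sub-family itself is valued in that block, which is not what your splicing describes. Moreover, finite-support pigeonhole is too weak: a putative trivialization consists of $\aleph_n$-many functions, each with infinite domain, so each may meet unboundedly many coordinates. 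The correct argument is a closing-off one: choose $\alpha$ in the stationary set of obstruction stages with $\omega_{n-1}\cdot\alpha=\alpha$ and such that every $\Psi_{\vec x}$ with $\vec x$ from $A_\alpha$ takes values in coordinates below $\alpha$ (a club condition, using the regularity of $\omega_n$ and $|A_\alpha|=\aleph_{n-1}$); then the projection of $\Psi_{(\cdot)^\frown y_\alpha}$ onto the stage-$\alpha$ block trivializes $\Theta^\alpha$ below $y_\alpha\wedge g$, which is the contradiction. So the idea is pointed in the right direction, but the scale assumption, the missing ``nontrivial below $g$'' machinery, the per-stage coordinate blocks, and the club-plus-stationarity argument are all essential and absent from the proposal.
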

In Section \ref{Mitchell_section}, we complete the analysis of $\lim^n\mathbf{A}$ in the Mitchell model and prove Theorem \ref{mit_thm_intro}. C. Lambie-Hanson has improved the large cardinal needed for this argument from a weakly compact cardinal to an inaccessible cardinal and we include his argument with his kind permission. 
In Section \ref{pres_section}, we present a number of forcings which do not trivialize any nontrivial $1$-coherent families. 
Finally, we present several related open questions. 

\section{Preliminaries} \label{prelim_section}
For $x\in\omega^\omega$, we write $I_x$ to mean $\{(n,m)\in\omega\times\omega\mid m\leq x(n)\}$ and for $\vec x\in (\omega^\omega)^{<\omega}$, we write $I_{\vec x}$ to mean $I_{\bigwedge \vec x}$ where $\bigwedge\vec x$ is the pointwise meet of $\vec x$. 
If $f,g$ are functions each with a domain a subset of $\omega\times\omega$, we write $f=^mg$ to mean that whenever $n>m$ and $i<\omega$ satisfies $(n,i)\in\operatorname{dom}(f)\cap\operatorname{dom}(g)$, we have $f(n,i)=g(n,i)$. 
We write $f=^*g$ to mean that $\{x\in\operatorname{dom}(f)\cap\operatorname{dom}(g)\mid f(x)\neq g(x)\}$ is finite. 
Oftentimes, we will pointwise add finitely many functions with different domains. 
When this occurs, we implicitly mean to restrict each function to the intersection of all of their domains. 
If $\vec x$ is a finite sequence and $i<|\vec x|$, we write $\vec x^i$ to mean the subsequence of $\vec x$ with the $i$th entry removed. 
\begin{definition} \label{coherent_def}
    If $H$ is an abelian group, $1\leq n<\omega$, and $X\subseteq \omega^\omega$, an \emph{$H$ valued $n$-family indexed by $X$} is a family $\Phi$ of functions $\Phi_{\vec x}\colon I_{\vec x}\to H$ for each $\vec x\in X^n$. 
    If $\Phi$ is an $n$-family indexed by $X$, we write $d\Phi$ to be the $(n+1)$-family indexed by $X$ with \[(d\Phi)_{\vec x}=\sum_{i=0}^n(-1)^i\Phi_{\vec x^i};\]
        Note here that we follow the convention that all functions are restricted to the common intersection of their domains.
    An $n$-family $\Phi$ indexed by $X$ is
    \begin{itemize}
        \item \emph{alternating} if for each $\vec x\in X^n$ and permutation $\sigma\in S_n$, $\Phi_{\vec x}=\operatorname{sgn}(\sigma)\Phi_{\sigma(\vec x)}$; 
        \item \emph{$n$-coherent} if it is alternating and for each $\vec x\in X^{n+1}$, $d\Phi_{\vec x}=^*0$; 
        \item \emph{$n$-trivial} if 
        \begin{itemize}
            \item 
        $n=1$ and there is a $\Psi\colon\omega\times\omega\to H$ such that for all $x\in X$, $\Psi=^*\Phi_x$ or 
        \item $n>1$ and there is an alternating $(n-1)$-family $\Psi$ indexed by $X$ such that for all $\vec x\in X^n$, $\Phi_{\vec x}=^*d\Psi_{\vec x}.$
        \end{itemize}
    \end{itemize}
    If $H$ is unspecified, we mean $H=\mathbb{Z}$ and if $X$ is unspecified we mean $X=\omega^\omega$. 
\end{definition}

Note that every $n$-trivial family is $n$-coherent. Whether the converse is true is independent of ZFC and is the subject of ongoing research. 

Coherent and trivial families are closely related to the derived limits of the inverse system of abelian groups $\mathbf{A}$. 
We will implicitly use the following fact relating the derived limits of $\mathbf{A}$ and coherent families; see, for instance, \cite[Theorem 3.3]{SHDLST} for a proof. 

\begin{fact}
    For $n>0$, $\lim^n\mathbf{A}=0$ if and only if every $n$-coherent family is $n$-trivial. 
\end{fact}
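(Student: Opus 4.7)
The plan is to execute a Roos-style identification between coherent-modulo-trivial families and cohomology classes in $\lim^\bullet\mathbf{A}$. Over the meet-semilattice $(\omega^\omega,\leq)$ I would form three cochain complexes, all carrying the same alternating coboundary $d$ of Definition~\ref{coherent_def}: a ``full'' complex $P^\bullet$ whose $k$-th term consists of alternating $(k+1)$-families $\alpha$ with $\alpha_{\vec x}\colon I_{\vec x}\to\mathbb{Z}$ an arbitrary function, the ``finitely supported'' subcomplex $F^\bullet\subseteq P^\bullet$ requiring $\alpha_{\vec x}\in\mathbf{A}_{\bigwedge\vec x}$, and the quotient $Q^\bullet:=P^\bullet/F^\bullet$. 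I would also extend each complex one step to the left with a $(-1)$-term of ``$0$-families'', namely single functions $\omega\times\omega\to\mathbb{Z}$ (arbitrary, finitely supported, or modulo $=^*$, respectively), with the natural restriction map into degree $0$. The standard computation of derived inverse limits via the alternating Roos complex then gives $H^k(F^\bullet)=\lim^k\mathbf{A}$.

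Under a degree shift by one, paper's $n$-families modulo $=^*$ correspond to the $(n-1)$-cochains in $Q^\bullet$, $n$-coherent families to $(n-1)$-cocycles, and $n$-trivial families to $(n-1)$-coboundaries; the special $n=1$ clause of Definition~\ref{coherent_def} is absorbed by the $(-1)$-term of $Q^\bullet$. So the content of the fact is $H^n(F^\bullet)=0\iff H^{n-1}(Q^\bullet)=0$ for every $n\geq 1$, which I would extract from the long exact cohomology sequence of $0\to F^\bullet\to P^\bullet\to Q^\bullet\to 0$ provided (i) $H^k(P^\bullet)=0$ for $k\geq 1$, (ii) $H^{-1}(Q^\bullet)=0$, and (iii) the map $H^0(P^\bullet)\to H^0(Q^\bullet)$ is zero. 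Claim (iii) is immediate: any cocycle in $P^0$ is a strictly compatible system of functions and so glues to a global $\Psi\colon\omega\times\omega\to\mathbb{Z}$ that witnesses $1$-triviality of its class in $Q^0$. Claim (ii) is a short diagonalization: if $\Psi|_{I_x}$ is finitely supported for every $x\in\omega^\omega$, then $\Psi$ itself must be finitely supported, since otherwise an $x$ tracing the support of $\Psi$ one column at a time would meet infinitely much of it.

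The main obstacle is (i), the acyclicity of $P^\bullet$ in positive degrees, equivalently the vanishing of $\lim^k\tilde P$ for $k\geq 1$ where $\tilde P_x:=\mathbb{Z}^{I_x}$. My approach is a fiber decomposition: $\tilde P$ factors, over $(i,j)\in\omega\times\omega$, as a product of constant $\mathbb{Z}$-valued inverse systems, each one sitting over the upward-closed subposet $\{x\in\omega^\omega:x(i)\geq j\}$ of $\omega^\omega$. Each such subposet is itself directed, hence has contractible nerve, so its constant inverse systems are acyclic in positive degrees. Since $\lim^\bullet$ commutes with products of inverse systems, $\lim^k\tilde P=0$ for $k\geq 1$, establishing (i). Combining (i)--(iii) with the long exact sequence then yields the required equivalence between vanishing of $\lim^n\mathbf{A}$ and triviality of all $n$-coherent families.
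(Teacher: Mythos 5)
The paper does not actually prove this Fact; it quotes it from \cite[Theorem 3.3]{SHDLST}, and your argument is essentially a reconstruction of that standard proof: compare the finitely supported alternating Roos-type complex computing $\lim^\bullet\mathbf{A}$ with the corresponding complex for the ``full'' system $x\mapsto\mathbb{Z}^{I_x}$, whose higher limits vanish, and read the identification of $\lim^n\mathbf{A}$ with $n$-coherent families modulo $n$-trivial ones off the long exact sequence. Your claims (i) and (iii) are correct, and your proof of (iii) in fact shows that every cocycle in $P^0$ is a restriction family of a single global function, which is all the $n=1$ case requires. Claim (ii), however, is false as stated: the indicator function of a single column $\{0\}\times\omega$ restricts to a finitely supported function on every $I_x$ (each $I_x$ meets a column in a finite set) but is not itself finitely supported, so $H^{-1}(Q^\bullet)\neq0$; your diagonalization only handles supports meeting infinitely many columns. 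Fortunately (ii) is superfluous: for $n\geq2$ the equivalence uses only (i), and for $n=1$ the relevant exact segment $H^0(P^\bullet)\to H^0(Q^\bullet)\to H^1(F^\bullet)\to H^1(P^\bullet)=0$ uses only (i) and (iii), so the Fact is unaffected. The one point to make explicit is that the identification $H^k(F^\bullet)\cong\lim^k\mathbf{A}$ (and likewise for the full complex), i.e.\ that the alternating, meet-valued complex indexed by arbitrary tuples computes derived limits over $\omega^\omega$, is precisely the nontrivial background established in the cited source; granting that, your remaining steps (the coordinatewise product decomposition of $\mathbb{Z}^{I_x}$, cofinal invariance of derived limits, and vanishing of constant systems over directed posets) are sound.
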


We need the following variation of Goblot's vanishing theorem. See, for instance, \cite[Proposition 2.7]{SNVHDL} for a proof.
\begin{theorem} \label{gob_vanish}
    Suppose that $H$ is an abelian group, $0<n<\omega$, and $|X|<\aleph_n$. 
    Any $H$ valued $n$-coherent family indexed by $X$ is trivial. 
\end{theorem}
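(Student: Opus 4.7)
The plan is to proceed by induction on $n$. The base case $n = 1$ is a short direct argument using a countable enumeration of $X$, and the inductive step combines the inductive hypothesis with a transfinite construction along an enumeration of $X$ of length at most $\aleph_n$.

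For the base case, enumerate $X = \{x_k : k < \omega\}$ and define $\Psi \colon \omega \times \omega \to H$ by $\Psi(j, i) := \Phi_{x_{k(j, i)}}(j, i)$ on $\bigcup_k I_{x_k}$, where $k(j, i)$ is the least $k$ with $(j, i) \in I_{x_k}$ (and arbitrary elsewhere). For each $\ell < \omega$, the set of $(j, i) \in I_{x_\ell}$ on which $\Psi$ and $\Phi_{x_\ell}$ disagree is contained in the union over $k < \ell$ of the finite sets $\{(j, i) \in I_{x_k \wedge x_\ell} : \Phi_{x_k}(j, i) \neq \Phi_{x_\ell}(j, i)\}$, hence is itself finite.

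For the inductive step, assume the theorem for $n$ and let $\Phi$ be an $(n+1)$-coherent $H$-valued family on $X$ with $|X| \leq \aleph_n$. Enumerate $X = \{x_\alpha : \alpha < |X|\}$, set $X_\alpha := \{x_\beta : \beta < \alpha\}$, and construct an $\subseteq$-increasing sequence of alternating $n$-families $\Psi^\alpha$ on $X_\alpha$ with $d\Psi^\alpha =^* \Phi$ on $X_\alpha^{n+1}$. At limit stages, take the union; at a successor stage $\alpha + 1$, consider the alternating $n$-family $\Lambda$ on $X_\alpha$ defined by $\Lambda_{\vec y} := \Psi^\alpha_{\vec y} - \Phi_{(x_\alpha, \vec y)}$. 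Applying $d$ to the $(n+2)$-tuple $(x_\alpha, \vec z)$ yields the $(n+1)$-coherence identity $\Phi_{\vec z} =^* \sum_i (-1)^i \Phi_{(x_\alpha, \vec z^i)}$, which combined with $d\Psi^\alpha =^* \Phi$ shows $d\Lambda =^* 0$; thus $\Lambda$ is $n$-coherent. Since $|X_\alpha| < \aleph_n$, the inductive hypothesis provides an alternating $(n-1)$-family $\Xi$ on $X_\alpha$ with $d\Xi =^* \Lambda$. Then set $\Psi^{\alpha+1}_{\vec y} := \Psi^\alpha_{\vec y}$ for $\vec y \in X_\alpha^n$ and $\Psi^{\alpha+1}_{(x_\alpha, \vec z)} := \Xi_{\vec z}$ for $\vec z \in X_\alpha^{n-1}$, extending to all of $X_{\alpha+1}^n$ by the alternating property. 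A direct calculation yields $d\Psi^{\alpha+1}_{(x_\alpha, \vec y)} = \Psi^\alpha_{\vec y} - d\Xi_{\vec y} =^* \Psi^\alpha_{\vec y} - \Lambda_{\vec y} = \Phi_{(x_\alpha, \vec y)}$, as needed. The union $\Psi := \bigcup_\alpha \Psi^\alpha$ is the desired triviality witness.

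The main obstacle is the sign bookkeeping in the successor step, specifically the verifications that $d\Lambda =^* 0$ and $d\Psi^{\alpha+1} =^* \Phi$. Both ultimately follow from the algebraic identity $d^2 = 0$ together with the coherence and triviality hypotheses, which provide the necessary cancellations modulo finite; the only care required is in correctly tracking indices when pulling $x_\alpha$ to the front of a tuple and invoking the alternating property.
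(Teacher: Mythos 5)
Your proof is correct, and it is essentially the standard argument: the paper itself does not prove this theorem but cites \cite[Proposition 2.7]{SNVHDL}, and your induction on $n$ together with a transfinite recursion along an enumeration of $X$ at the successor stages (using the inductive hypothesis to trivialize the ``error'' family $\Lambda_{\vec y}=\Psi^\alpha_{\vec y}-\Phi_{(x_\alpha,\vec y)}$) is the same route. The only bookkeeping worth making explicit is that, under the paper's conventions, $\Lambda_{\vec y}$ is literally defined only on $I_{x_\alpha\wedge\bigwedge\vec y}$, so you should extend it by zero to $I_{\vec y}$ before invoking the inductive hypothesis (this costs nothing: off $I_{x_\alpha\wedge\bigwedge\vec z}$ every term of $d\Lambda_{\vec z}$ is zero, so coherence is preserved exactly) and then restrict $\Xi_{\vec z}$ back to $I_{x_\alpha\wedge\bigwedge\vec z}$ when defining $\Psi^{\alpha+1}_{(x_\alpha,\vec z)}$; similarly, values at tuples in which $x_\alpha$ occurs more than once must be assigned (e.g.\ by the alternating convention), but the coherence constraints at such degenerate tuples decouple from the nondegenerate ones, so this is harmless and is glossed in the same way throughout the paper.
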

The following is often useful when we define coherent families recursively.
\begin{proposition} \label{triv_ext_prop}
    Suppose $\Phi$ is an $n$-trivial family indexed by $X$. 
    Then $\Phi$ extends to an $n$-trivial family indexed by $\omega^\omega$. 
\end{proposition}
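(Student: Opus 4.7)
The plan is to extend the trivializing witness of $\Phi$, rather than $\Phi$ itself, and then push the extended witness through the definition of triviality. The base case $n = 1$ is immediate: the witness $\Psi : \omega \times \omega \to H$ is already globally defined, so setting $\tilde\Phi_x := \Psi \restriction I_x$ for every $x \in \omega^\omega \setminus X$ (and $\tilde\Phi_x := \Phi_x$ for $x \in X$) produces an extension still trivialized by the same $\Psi$.

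For $n > 1$, let $\Psi$ be the alternating $(n-1)$-family indexed by $X$ witnessing triviality of $\Phi$, so that $\Phi_{\vec x} =^* d\Psi_{\vec x}$ for every $\vec x \in X^n$. I would extend $\Psi$ to an $(n-1)$-family $\tilde\Psi$ indexed by $\omega^\omega$ by declaring $\tilde\Psi_{\vec y}$ to be the zero function on $I_{\vec y}$ whenever $\vec y \notin X^{n-1}$ and setting $\tilde\Psi_{\vec y} := \Psi_{\vec y}$ otherwise. Since a permutation of an $(n-1)$-tuple lies in $X^{n-1}$ if and only if the original does, this extension is alternating: permutations of tuples in $X^{n-1}$ inherit the property from $\Psi$, and permutations of tuples outside $X^{n-1}$ are governed by the trivial identity $0 = \mathrm{sgn}(\sigma) \cdot 0$. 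I would then set $\tilde\Phi_{\vec x} := \Phi_{\vec x}$ for $\vec x \in X^n$ and $\tilde\Phi_{\vec x} := d\tilde\Psi_{\vec x}$ otherwise. On new tuples the relation $\tilde\Phi_{\vec x} =^* d\tilde\Psi_{\vec x}$ holds by fiat, and on $\vec x \in X^n$ it reduces to the original triviality relation since $\tilde\Psi$ agrees with $\Psi$ on $X^{n-1}$.

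The only remaining checks, both routine, are that $\tilde\Phi$ itself is alternating (inherited from $\Phi$ on $X^n$, and from the fact that $d$ of an alternating family is alternating on tuples outside $X^n$) and, as usual, that $\tilde\Phi$ satisfies the ``$d\tilde\Phi =^* 0$'' coherence requirement subsumed by triviality, which is automatic from $\tilde\Phi =^* d\tilde\Psi$. I do not anticipate any real obstacle; the content of the proposition is simply that one is free to zero-extend a trivialization witness, with the key observation being that invariance of $X^{n-1}$ under coordinate permutations is what makes this zero extension alternating.
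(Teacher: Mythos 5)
Your proof is correct and follows essentially the same route as the paper: the $n=1$ case is identical, and for $n>1$ the paper likewise extends the trivializing witness $\Psi$ to an alternating $(n-1)$-family on all of $\omega^\omega$ (the paper says ``extend arbitrarily'' where you give the concrete zero extension and verify it is alternating) and then defines the new values of $\Phi$ as $d\Psi$ off $X^n$.
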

\begin{proof}
    If $n=1$, let $\Psi$ be a trivialization of $\Phi$ and set
    \[\Phi^*_x=\left\{\begin{array}{cc}
         \Phi_x&x\in X  \\
         \Psi\upharpoonright I_x&\text{otherwise} 
    \end{array}\right..\]
    Then $\Phi^*$ is $1$-trivial since $\Psi$ is a trivialization. 
    
    If $n>1$, let $\Psi$ be a trivialization of $\Phi$. 
    Extend $\Psi$ arbitrarily to an alternating family indexed by $\omega^\omega$, and for each $\vec x\in(\omega^\omega)^n\setminus X^n$ let
    \[\Phi_{\vec x}^*=d\Psi_{\vec x}.\]
    Then $\Phi^*$ is $n$-trivial since $\Psi$ is a trivialization.  
\end{proof}

\section{Nonvanishing from $\diamondsuit(S^2_1)$} \label{diamond_section}
The main result of this section is the following refinement of \cite[Theorem 4.1]{SHDLST} by way of removing the hypothesis $\mathfrak{b}=\aleph_2$. 
In Section \ref{wd_section}, we will further strengthen the theorem to allow $w\diamondsuit(S^2_1)$ rather than $\diamondsuit(S^2_1)$ and thereby also refine \cite[Theorem A(2)]{SNVHDL}. 
\begin{theorem} \label{diamond_thm}
$\diamondsuit(S^2_1)+\mathfrak{d}=\aleph_2$ implies that $\lim^2\mathbf{A}\neq0$. 
\end{theorem}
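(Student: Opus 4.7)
The plan is a recursive construction over $\omega_2$ stages of a $2$-coherent family $\Phi$ which is not $2$-trivial, using $\diamondsuit(S^2_1)$ to diagonalize against prospective trivializations. The key new wrinkle compared to Bergfalk's proof in \cite{SHDLST}, which assumes $\mathfrak{b} = \aleph_2$, is that we cannot enumerate a dominating family as a $\leq^*$-scale. We compensate by invoking Theorem \ref{gob_vanish}: at each stage below $\omega_2$, the initial segment of $\Phi$ is indexed by $\aleph_1$-many elements and hence is automatically $2$-trivial, providing the flexibility needed to continue the recursion.

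Concretely, I would fix an enumeration $\langle d_\alpha : \alpha < \omega_2 \rangle$ of a dominating family, set $X_\alpha := \{d_\beta : \beta < \alpha\}$, and use $\diamondsuit(S^2_1)$ to obtain a guessing sequence $\langle \Psi^\alpha : \alpha \in S^2_1 \rangle$ coding alternating $1$-families on $X_\alpha$. Define $\Phi_{(d_\alpha, d_\beta)}$ by recursion on $\alpha$. At a stage $\alpha \notin S^2_1$, select by Theorem \ref{gob_vanish} a $2$-trivialization $\Xi^\alpha$ of $\Phi \upharpoonright X_\alpha^2$, choose any $\xi : I_{d_\alpha} \to \mathbb{Z}$, and set $\Phi_{(d_\alpha, d_\beta)} := (\Xi^\alpha_{d_\beta} - \xi) \upharpoonright I_{d_\alpha \wedge d_\beta}$. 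At a stage $\alpha \in S^2_1$ I do the same unless the guess $\Psi^\alpha$ itself $2$-trivializes $\Phi \upharpoonright X_\alpha^2$, in which case I execute the \emph{diagonalization step}: choose a nontrivial $1$-coherent family $\{F^\alpha_\beta : \beta < \alpha\}$ with $F^\alpha_\beta : I_{d_\alpha \wedge d_\beta} \to \mathbb{Z}$ (viewed inside $I_{d_\alpha}$), and set $\Phi_{(d_\alpha, d_\beta)} := (\Psi^\alpha_{d_\beta} - F^\alpha_\beta) \upharpoonright I_{d_\alpha \wedge d_\beta}$.

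A short calculation reduces $2$-coherence on a triple $(d_\alpha, d_\beta, d_\gamma)$ to the $1$-coherence of the $F^\alpha_\bullet$ or to the trivialization property of the $\Xi^\alpha$ used at stage $\alpha$, both of which hold by construction. For nontriviality: if some alternating $1$-family $\Psi$ were a $2$-trivialization of $\Phi$, then diamond would yield a stationary set of $\alpha \in S^2_1$ with $\Psi^\alpha = \Psi \upharpoonright X_\alpha$; comparing the two formulas for $\Phi_{(d_\alpha, d_\beta)}$ at any such $\alpha$ forces $F^\alpha_\beta =^* \Psi_{d_\alpha} \upharpoonright I_{d_\alpha \wedge d_\beta}$ for every $\beta < \alpha$, contradicting nontriviality of $\{F^\alpha_\beta\}$.

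The main technical obstacle is producing, at each diagonalization stage, the nontrivial $1$-coherent family $\{F^\alpha_\beta\}$ with the prescribed domains $I_{d_\alpha \wedge d_\beta}$. The plan is to transfer the classical ZFC construction of a nontrivial $1$-coherent family on $\omega^\omega$ into $I_{d_\alpha}$ via a bijection with $\omega$, using that $\{d_\alpha \wedge d_\beta : \beta < \alpha\}$ is cofinal modulo finite among functions dominated by $d_\alpha$, a consequence of $\{d_\gamma : \gamma < \omega_2\}$ being dominating and $\alpha$ having cofinality $\omega_1$. Once $\Phi$ is defined on the dominating family, a routine pullback along the dominating order extends it to a $2$-coherent family on all of $\omega^\omega$ that remains nontrivial and hence witnesses $\lim^2 \mathbf{A} \neq 0$.
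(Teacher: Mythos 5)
Your overall architecture does match the paper's: a length-$\omega_2$ recursion in which Theorem \ref{gob_vanish} supplies trivializations at unguessed stages, $\diamondsuit(S^2_1)$ guesses prospective trivializations, guessed stages are sabotaged by adding a $1$-coherent family that is nontrivial ``below'' the new index, and the final catch argument shows the guessed trivialization cannot survive. The genuine gap is in the step you yourself flag as the main obstacle: producing, at a diagonalization stage, the family $\{F^\alpha_\beta\colon I_{d_\alpha\wedge d_\beta}\to\mathbb{Z}\}$ that admits no single $\Upsilon\colon I_{d_\alpha}\to\mathbb{Z}$ with $\Upsilon=^*F^\alpha_\beta$ for all $\beta$. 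First, there is no ``classical ZFC construction of a nontrivial $1$-coherent family on $\omega^\omega$'' to transfer: such a family is exactly a witness to $\lim^1\mathbf{A}\neq0$, which consistently fails (e.g.\ under PFA, or in the models of \cite{SVHDL,SVHDLWLC}). The ZFC input that does exist is Todor\v{c}evi\'c's theorem along $\subset^*$-increasing $\omega_1$-chains, which the paper packages as Lemma \ref{nontriv_below}; applying it below $d_\alpha$ requires the chain $I_{f_\beta\wedge d_\alpha}$ to be \emph{strictly} $\subset^*$-increasing, i.e.\ $d_\alpha$ must not be $<^*$-dominated by the earlier indices. Second, your supporting claim that $\{d_\alpha\wedge d_\beta:\beta<\alpha\}$ is $=^*$-cofinal among functions dominated by $d_\alpha$ is not a consequence of the enumeration being a dominating family and $\operatorname{cf}(\alpha)=\omega_1$; worse, since $d_\alpha$ is itself the $\leq^*$-maximum of the functions it dominates, that cofinality would force some $d_\beta$ with $\beta<\alpha$ to satisfy $d_\beta\geq^*d_\alpha$, and then \emph{every} $1$-coherent family on the domains $I_{d_\alpha\wedge d_\beta}$ is trivial below $d_\alpha$ (extend $F^\alpha_{\beta_0}$ for such a $\beta_0$ and use coherence), so the very hypothesis you invoke would make the required family impossible. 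The condition you need is the opposite one, and it is precisely where $\mathfrak{d}=\aleph_2$ is used.

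Third, with an arbitrary enumeration $\langle d_\gamma\mid\gamma<\omega_2\rangle$ of a dominating family, nothing guarantees either that $d_\alpha$ escapes $\{d_\beta\mid\beta<\alpha\}$ at the relevant stages, or that $(\{d_\beta\mid\beta<\alpha\},<^*)$ contains a cofinal $<^*$-increasing $\omega_1$-chain, which is what Lemma \ref{nontriv_below} needs. The paper arranges both structural features at once by indexing along an internally approaching continuous chain $\langle M_\alpha\mid\alpha<\omega_2\rangle$ of elementary submodels covering $\omega^\omega$: internal approachability plus $\operatorname{cf}(\alpha)=\omega_1$ yields the cofinal $\omega_1$-chain in $M_\alpha\cap\omega^\omega$, and $\mathfrak{d}=\aleph_2$ yields a function $x\in M_{\alpha+1}$ not $<^*$-dominated by $M_\alpha$, against which the Todor\v{c}evi\'c family is nontrivial. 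Your proposal could be repaired along these lines (e.g.\ replace $d_\alpha$ by its join with a function escaping $\{d_\beta\mid\beta<\alpha\}$, and close the initial segments so that stages in $S^2_1$ carry cofinal $\omega_1$-chains), but as written the key step rests on a false cofinality claim and a nonexistent ZFC theorem, so the diagonalization cannot be carried out.
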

We first recall the definition of $\diamondsuit(S)$ for stationary $S\subseteq\kappa$:
\begin{definition}
    For $\kappa$ regular and $S\subseteq\kappa$ stationary, $\diamondsuit(S)$ asserts that there are $\langle A_\alpha\mid\alpha<\kappa\rangle$ such that $A_\alpha\subseteq\alpha$ for each $\alpha$ and for every $A\subseteq \kappa$, $\{\alpha\in S\mid A\,\cap \,\alpha=A_\alpha\}$ is stationary. 
\end{definition}
If $n<m<\omega$, we write write $S^m_n$ to mean the collection of ordinals less than $\aleph_m$ of cofinality $\aleph_n$. 
We will need the following weakening of triviality:
\begin{definition} \label{coherent_below_g_def}
    If $g\in\omega^\omega$ and $X\subseteq\omega^\omega$, a 
    $1$-coherent family $\Phi$ indexed by $X$ is \emph{trivial below $g$} if there is a $\Psi\colon I_g\to\mathbb{Z}$ such that $\Psi=^*\Phi_x$ for all $x\in X$. 
\end{definition}
Note that a $1$-coherent family $\Phi$ indexed by $X$ is trivial below $g$ if and only if $\Phi$ extends to a coherent family indexed by $A\cup\{g\}$; in fact, a trivialization is an extension of $\Phi$ to $A\cup\{g\}$. 

The following lemma is a consequence of a theorem of Todor\v{c}evi\'c:
\begin{lemma} \label{nontriv_below}
    Suppose $\langle f_\alpha\mid\alpha<\omega_1\rangle$ is a $<^*$-increasing chain and $g\in\omega^\omega$ is not $<^*$-dominated by any $f_\alpha$. 
    There is a $1$-coherent family indexed by $\{f_\alpha\mid\alpha<\omega_1\}$ which is nontrivial below $g$. 
\end{lemma}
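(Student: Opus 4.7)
My plan is to invoke a classical construction of Todor\v{c}evi\'c that produces non-trivial coherent families on $<^*$-increasing $\omega_1$-chains in $\omega^\omega$, adapted to yield the stronger non-triviality below $g$ by exploiting the hypothesis that $g$ is not $<^*$-dominated by the chain.

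I first fix a ladder system $\langle C_\alpha \mid \alpha \in \mathrm{Lim}(\omega_1)\rangle$ and recursively define $\Phi_{f_\alpha}\colon I_{f_\alpha}\to\mathbb{Z}$ along $\alpha<\omega_1$. At successor stages I extend arbitrarily via Proposition \ref{triv_ext_prop}. At each limit $\alpha$, the family $\{\Phi_{f_\beta}\mid\beta<\alpha\}$ is coherent and has size less than $\aleph_1$, so Theorem \ref{gob_vanish} produces a trivialization $\Psi_\alpha\colon\omega\times\omega\to\mathbb{Z}$; I set $\Phi_{f_\alpha}=\Psi_\alpha\upharpoonright I_{f_\alpha}+\epsilon_\alpha$, where $\epsilon_\alpha$ is a finite-support perturbation supported inside $I_g\cap I_{f_\alpha}$. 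Since $g\not\le^* f_\alpha$, the set of $n$ with $g(n)>f_\alpha(n)$ is infinite, so entire columns of $I_{f_\alpha}$ lie inside $I_g$ for infinitely many $n$, leaving room to install perturbations that encode $\alpha$ relative to $C_\alpha$.

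To see non-triviality below $g$, I argue by contradiction: suppose $\Psi\colon I_g\to\mathbb{Z}$ satisfies $\Psi=^*\Phi_{f_\alpha}$ on $I_g\cap I_{f_\alpha}$ for every $\alpha$, and for each $\alpha$ let $m(\alpha)<\omega$ be the least integer witnessing this agreement. A pigeonhole step yields a stationary $S\subseteq\omega_1$ on which $m(\alpha)$ takes a constant value $m$. For $\alpha\in S\cap\mathrm{Lim}(\omega_1)$ and sufficiently large $\beta\in C_\alpha\cap S$, both $\Phi_{f_\alpha}$ and $\Phi_{f_\beta}$ agree with $\Psi$ on $I_g\cap I_{f_\alpha}\cap I_{f_\beta}$ past column $m$, forcing $\epsilon_\alpha$ and $\epsilon_\beta$ to agree past $m$ there as well. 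A Fodor-style pressing-down argument then collapses the $\alpha$-dependent encodings along $S$, delivering the required contradiction.

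The main obstacle is designing the encoding $\epsilon_\alpha$ so that it lives inside $I_g\cap I_{f_\alpha}$ (so any putative $\Psi$ can see it), records enough information about $\alpha$ for the pressing-down to fire, and is compatible with the $\Phi_{f_\beta}$ already constructed so that coherence survives. This is exactly what Todor\v{c}evi\'c's walks-on-ordinals apparatus provides: a coherent, ladder-indexed family of finite modifications that distinguish $\omega_1$-many ordinals, with the infinite escape set of $g$ past each $f_\alpha$ anchoring these modifications inside $I_g$ as required.
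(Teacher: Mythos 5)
Your proposal correctly identifies the two essential points -- that the infinite escape set $\{n \mid g(n) > f_\alpha(n)\}$ is what lets the family be seen ``below $g$,'' and that some walks-on-ordinals-style combinatorics must supply the uncountable nontriviality -- but as written it has a genuine gap: the entire content of the lemma is deferred to an unspecified encoding. Your recursion produces $\Phi_{f_\alpha} = \Psi_\alpha \upharpoonright I_{f_\alpha} + \epsilon_\alpha$ with $\epsilon_\alpha$ of finite support, which does keep the family coherent no matter how the $\epsilon_\alpha$ are chosen; but precisely for that reason the nontriviality below $g$ rests entirely on the design of the $\epsilon_\alpha$, and you never specify it. Since we are not assuming CH, you cannot enumerate the continuum-many candidate trivializations $\Psi \colon I_g \to \mathbb{Z}$ and diagonalize against them along $\omega_1$, so the recursion must anticipate all future trivializations through some ladder-system coding -- and the sketched Fodor argument does not close: after pigeonholing to a stationary $S$ with constant agreement bound $m$, the functions $\Psi_\alpha \upharpoonright I_{f_\beta}$ and $\Psi_\beta$ themselves differ by a finite (but $\alpha,\beta$-dependent and unbounded over $S$) amount, so ``$\epsilon_\alpha$ and $\epsilon_\beta$ agree past $m$'' does not follow, and no contradiction is extracted without knowing what the encodings actually record. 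Saying ``this is exactly what Todor\v{c}evi\'c's walks-on-ordinals apparatus provides'' is an appeal to the very theorem being proved rather than a proof.

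The paper's route shows how to make this precise with essentially no construction at all: apply Todor\v{c}evi\'c's theorem on strictly $\subset^*$-increasing $\omega_1$-chains of subsets of $\omega$ (Theorem \ref{walks_thm}, i.e.\ \cite[Theorem 3.3.13]{Walks}) to the sets $A_\alpha = I_{f_\alpha \wedge g}$. The hypothesis that $g$ is not $<^*$-dominated by any $f_\alpha$ is used exactly where your ``escape set'' intuition lives: it makes the chain $\langle A_\alpha \rangle$ \emph{strictly} $\subseteq^*$-increasing, which is what the cited theorem needs. The resulting sets $B_\alpha$ with $B_\alpha =^* B_\beta \cap A_\alpha$ and with no single $B$ satisfying $B_\alpha =^* B \cap A_\alpha$ for all $\alpha$ give, via characteristic functions, a coherent family on $\{f_\alpha\}$ that is nontrivial below $g$ (a trivialization $\Psi \colon I_g \to \mathbb{Z}$ would yield such a $B$ from $\{\Psi = 1\}$). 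If you want to salvage your recursive framing, you would in effect have to reprove that theorem, i.e.\ exhibit a concrete coherent ladder-indexed encoding (such as the $e_\alpha$ or $\rho$-functions from walks) and carry out the pressing-down with it; the cleaner move is the change of index sets $A_\alpha = I_{f_\alpha \wedge g}$, which reduces the lemma to the quoted result verbatim.
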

\begin{proof}
    We cite \cite[Theorem 3.3.13]{Walks}:
    \begin{theorem}[{\cite[Theorem 3.3.13]{Walks}}] \label{walks_thm}
        For any strictly $\subset^*$-increasing chain $\langle A_\alpha\mid\alpha<\omega_1\rangle$ of subsets of $\omega$, there is a sequence $\langle B_\alpha\mid \alpha<\omega_1\rangle$ of subsets of $\omega$ such that:
        \begin{itemize}
            \item $B_\alpha=^*B_\beta\cap A_\alpha$ whenever $\alpha<\beta<\omega_1$,
            \item There is no $B$ such that $B_\alpha=^*B\cap A_\alpha$ for all $\alpha<\omega_1$. 
        \end{itemize}
    \end{theorem}
    Now, set $A_\alpha=I_{f_\alpha\wedge g}$. 
    The sequence $\langle A_\alpha\mid\alpha<\omega_1\rangle$ is strictly $\subseteq^*$-increasing: for weakly increasing, if $\alpha<\beta$ and $f_\alpha\leq^mf_\beta$ then $A_\alpha\setminus(m\times\omega)\subseteq A_\beta$ and for the strict part, whenever $f_\alpha(k)<f_\beta(k)<g(k)$, $(k,f_\alpha(k)+1)\in A_\beta\setminus A_\alpha$. 
    Note that $A_\alpha\subseteq\omega\times\omega$ rather than $\omega$, though the difference is only cosmetic. 
    Let $\langle B_\alpha\mid\alpha<\omega_1\rangle$ be as in Theorem \ref{walks_thm} and set
    \[\Phi_{f_\alpha}(i,j)=\left\{\begin{array}{cc}
         1&(i,j)\in B_\alpha  \\
         0&\text{otherwise} 
    \end{array}\right..\]
    Then $\Phi$ is coherent by the first conclusion of Theorem \ref{walks_thm} and nontrivial below $g$ by the second. 
\end{proof}
We now prove Theorem \ref{diamond_thm}.
\begin{proof}
    Fix $\langle M_\alpha\mid\alpha<\omega_2\rangle$ increasing, continuous elementary substructures of $H_\theta$ such that
    \begin{itemize}
        \item for each $\alpha<\omega_2$, $|M_\alpha|=\aleph_1$;
        \item $\omega^\omega=\bigcup_{\alpha<\omega_2}M_\alpha\cap \omega^\omega$;
        \item for each $\alpha<\omega_2$, $\langle M_i\mid i\leq \alpha\rangle\in M_{\alpha+1}$ (that is, the $M_\alpha$ are \emph{internally approaching}).   
    \end{itemize}
    We define $\Phi_{xy}$ for $x,y\in M_\alpha\cap\omega^\omega$ by recursion on $\alpha$. 
    An easy coding argument yields that $\diamondsuit(S^2_1)$ implies there are $\langle\Psi^\alpha_x\mid\alpha<\omega_2,x\in M_\alpha\cap\omega^\omega\rangle$ with $\Psi^\alpha_x\colon I_x\to\mathbb{Z}$ such that for any family $\langle\Psi_x\mid x\in\omega^\omega\rangle$ with each $\Psi_x\colon I_x\to\mathbb{Z}$, there are stationarily many $\alpha\in S^2_1$ satisfying that for all $x\in M_\alpha\cap\omega^\omega$, $\Psi_x=\Psi^\alpha_x$. 

    For $x,y\in M_0$, we define $\Phi_{x,y}$ to be the constant $0$ function. 
    Suppose now that $\alpha\not\in S^2_1$ or that $\Psi^\alpha$ does not trivialize $\Phi\upharpoonright M_\alpha$. 
    By Theorem \ref{gob_vanish}, $\Phi\upharpoonright M_\alpha$ is trivial and we extend $\Phi$ to a $2$-coherent family indexed by $M_{\alpha+1}\cap\omega^\omega$ using Proposition \ref{triv_ext_prop}. 

    Now suppose $\alpha\in S^2_1$ and $\Psi^\alpha$ trivializes $\Phi\upharpoonright M_\alpha$. Note that there is a $<^*$-increasing chain of order type $\omega_1$ which is cofinal in $M_\alpha\cap\omega^\omega$ since $\operatorname{cof}(\alpha)=\omega_1$ and if $\beta<\alpha$ then any countable subset of $M_\beta\cap\omega^\omega$ has a $<^*$ upper bound in $M_{\beta+1}$. 
    Since $\mathfrak{d}=\aleph_2$, we may fix some $x\in M_{\alpha+1}\cap\omega^\omega$ which is not $<^*$-dominated by any element of $M_\alpha$. 
    By Lemma \ref{nontriv_below}, there is a $1$-coherent family $\Theta=\langle\theta_{y}\mid y\in M_{\alpha}\cap\omega^\omega\rangle$ which is nontrivial below $x$. 
    For $y,z$ not both in $M_{\alpha}$, let $\Phi_{yz}=\Psi^\alpha_z+\theta_z-\Psi^\alpha_y-\theta_y$ where $\Psi^\alpha,\theta$ are defined as the constant $0$ functions on indices not in $M_\alpha$ - note that extending $\Theta$ in this way \emph{does not} maintain coherence. 
    We now claim that $\Psi^\alpha$ does not extend to a trivialization of $\Phi$ on $M_{\alpha+1}$. Indeed, if $\Psi^*$ were any such trivialization then for any $y\in M_\alpha\cap\omega^\omega$, 
    \[
    \begin{aligned}
      -\Psi^*_x&=-\Psi^*_x+\Psi^\alpha_y-\Psi^\alpha_y\\
      &=^*\Phi_{xy}-\Psi^\alpha_y\\
      &=\Psi^\alpha_y+\theta_y-\Psi^\alpha_y\\
      &=\theta_y
    \end{aligned}
    \]
    where the first equality is adding $0$, the second is using that $\Psi^*$ is a trivialization of $\Phi$ extending $\Psi^\alpha$, the third is the definition of $\Phi_{xy}$, and the last is canceling identical terms with opposite signs. 
    That is, $-\Psi^*_x$ is a trivialization of $\Theta$ below $x$, contradicting the choice of $\Theta$ to be nontrivial below $x$.
    Now, if $\Phi$ were trivial and $\Psi$ were a trivialization, we could find some $\alpha\in S^2_1$ such that $\Psi^\alpha_{y}=\Psi_{y}$ for all $y\in M_\alpha\cap\omega^\omega$. 
    But then $\Psi$ would be a trivialization of $\Phi$ on $M_{\alpha+1}$ extending $\Psi^\alpha$, which we just showed was impossible. 
\end{proof}

\section{Nonvanishing derived limits in certain models} \label{certain_models_section}
We now give a number of consequences of Theorem \ref{diamond_thm}, including nonvanishing of derived limits in all the standard models from set theory of the reals. 
Our first corollary is the following, as noted in the introduction:
\begin{corollary} \label{diamond_dich_cor}
    $\diamondsuit(S^2_1)$ implies either $\lim^1\mathbf{A}\neq0$ or $\lim^2\mathbf{A}\neq0$. 
\end{corollary}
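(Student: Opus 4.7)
The plan is to split into cases according to the value of $\mathfrak{d}$. The hypothesis $\diamondsuit(S^2_1)$ already gives $2^{\aleph_0}\leq\aleph_2$: any real $X\subseteq\omega$, viewed as a subset of $\omega_2$, satisfies $X\cap\alpha=X$ for every $\alpha\in S^2_1$ (since $\alpha\geq\omega_1>\omega$), so the diamond sequence must literally enumerate every real among its $\aleph_2$ entries. Combined with the standard bound $\mathfrak{d}\leq 2^{\aleph_0}$ and the trivial $\mathfrak{d}\geq\aleph_1$, only two cases remain.

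If $\mathfrak{d}=\aleph_2$, then Theorem \ref{diamond_thm} applies verbatim, yielding $\lim^2\mathbf{A}\neq 0$ and establishing the second disjunct. If instead $\mathfrak{d}=\aleph_1$, then the Dow--Simon--Vaughan refinement of the Marde\v{s}i\'c--Prasolov theorem, cited in the introduction from \cite{SHPFA}, already forces $\lim^1\mathbf{A}\neq 0$, establishing the first disjunct. Since these cases are exhaustive, the corollary follows.

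There is no genuine obstacle at the level of this corollary; all of the real work is done in Theorem \ref{diamond_thm} and in the cited result for the $\mathfrak{d}=\aleph_1$ case. The only content beyond quotation is the observation that $\diamondsuit(S^2_1)$ forces $\mathfrak{d}\leq\aleph_2$, and this is immediate from the argument in the first paragraph. It is worth remarking that the dichotomy is genuinely necessary: $\diamondsuit(S^2_1)$ is consistent both with $\mathfrak{d}=\aleph_1$ (e.g.\ under $\diamondsuit_{\omega_1}+\diamondsuit(S^2_1)$) and with $\mathfrak{d}=\aleph_2$, so one cannot hope to route through a single hypothesis to a fixed $n$.
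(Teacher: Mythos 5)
Your proof is correct and matches the paper's (implicit) argument: the paper likewise notes that $\diamondsuit(S^2_1)$ forces $\mathfrak{d}\leq 2^{\aleph_0}\leq\aleph_2$ and then splits into $\mathfrak{d}=\aleph_2$ (Theorem \ref{diamond_thm}) versus $\mathfrak{d}=\aleph_1$ (Dow--Simon--Vaughan \cite{SHPFA}). Your use of the Dow--Simon--Vaughan result rather than just CH in the $\mathfrak{d}=\aleph_1$ case is exactly the right move, since $\diamondsuit(S^2_1)$ does not imply CH.
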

We obtain the following consequence, whose hypotheses are satisfied by any $\aleph_2$-cc forcing of cardinality $\aleph_2$: 
\begin{corollary} \label{forcings}
    Assume $\diamondsuit(S^2_1)$ holds and let $\mathbb{P}$ be any forcing of cardinality $\aleph_2$ that does not collapse $\aleph_2$ and preserves the stationarity of all stationary subsets of $S^2_1$. 
    After forcing with $\mathbb{P}$, either $\lim^1\mathbf{A}\neq0$ or $\lim^2\mathbf{A}\neq0$. 
\end{corollary}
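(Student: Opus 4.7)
The plan is to reduce the corollary to an application of Corollary \ref{diamond_dich_cor} inside the forcing extension $V[G]$, so the main task is to show that $\diamondsuit(S^2_1)$ is preserved by $\mathbb{P}$. Once preservation is established, Corollary \ref{diamond_dich_cor} applied in $V[G]$ immediately yields the dichotomy $\lim^1\mathbf{A}\neq 0$ or $\lim^2\mathbf{A}\neq 0$.

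For the preservation of $\diamondsuit(S^2_1)$, I would fix in $V$ a diamond sequence $\langle A_\alpha \mid \alpha < \omega_2\rangle$, an enumeration $\mathbb{P} = \{p_\gamma \mid \gamma < \omega_2\}$ (using $|\mathbb{P}| = \aleph_2$), and a bijection $\pi \colon \omega_2 \times \omega_2 \to \omega_2$ satisfying $\pi[\alpha \times \alpha] = \alpha$ for $\alpha$ in a club. In $V[G]$, define
\[ A^*_\alpha = \{\beta < \alpha \mid \exists\, \gamma < \alpha,\ \pi(\beta,\gamma) \in A_\alpha \text{ and } p_\gamma \in G\}, \]
and argue that $\langle A^*_\alpha \mid \alpha < \omega_2\rangle$ is a $\diamondsuit(S^2_1)$-sequence in $V[G]$. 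Given any $A \subseteq \omega_2$ in $V[G]$, take a name of the form $\dot A = \{(\check\beta, p_\gamma) \mid (\beta,\gamma) \in X\}$ with $X \subseteq \omega_2 \times \omega_2$ in $V$. Apply the ground-model diamond to $\pi[X] \subseteq \omega_2$: the set $S_X = \{\alpha \in S^2_1 \mid A_\alpha = \pi[X]\cap\alpha \text{ and } \pi[\alpha\times\alpha]=\alpha\}$ is stationary in $V$, and by the hypothesis on $\mathbb{P}$ it remains stationary in $V[G]$. Working in $V[G]$, for each $\beta \in A$ pick some $\gamma_\beta$ with $p_{\gamma_\beta}\in G$ and $(\beta,\gamma_\beta)\in X$; the set $C$ of $\alpha < \omega_2$ closed under $\beta\mapsto\gamma_\beta$ is club. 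Unraveling definitions, for $\alpha \in S_X \cap C$ one has $A^*_\alpha = A\cap\alpha$, and this intersection is stationary in $V[G]$.

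The main obstacle is the preservation argument itself: the ground-model diamond sequence $\langle A_\alpha\rangle$ cannot be expected to literally guess new subsets of $\omega_2$, since a generic $A$ need not be coded by anything in $V$. The device above routes around this by using $A_\alpha$ to guess the $V$-encoding $\pi[X]$ of a name for $A$ and then delegating the final decoding step to $G$ itself; the assumed preservation of stationarity for subsets of $S^2_1$ is exactly what transfers the guessing stationary set from $V$ to $V[G]$, while preservation of $\aleph_2$ (and hence of $\aleph_1$, for the relevant nice $\mathbb{P}$'s such as $\aleph_2$-cc forcings) ensures that $S^2_1$ itself is computed the same way in both models.
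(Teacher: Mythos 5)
Your preservation argument for the diamond is essentially the paper's own (the paper isolates it as a standard fact: assuming the underlying set of $\mathbb{P}$ is $\kappa=\omega_2$, guess the relation $\{(\beta,\gamma)\mid \beta\Vdash\gamma\in\dot\tau\}$ with a ground-model $\diamondsuit(S^2_1)$-sequence for subsets of $\kappa\times\kappa$ and decode with $G$, using the hypothesis that stationary subsets of $S^2_1$ stay stationary). So that part is fine and matches the paper.

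The gap is in your last step, where you assert that preservation of $\aleph_2$ ``(and hence of $\aleph_1$, for the relevant nice $\mathbb{P}$'s such as $\aleph_2$-cc forcings)'' ensures $S^2_1$ is computed the same way in $V$ and $V[G]$. The hypotheses of the corollary do not give you preservation of $\aleph_1$: neither $|\mathbb{P}|=\aleph_2$, nor $\aleph_2$-preservation, nor preservation of stationary subsets of $S^2_1$ rules out collapsing $\aleph_1$ (for instance $\operatorname{Coll}(\omega,\omega_1)$, padded to have cardinality $\aleph_2$, is $\aleph_2$-cc, hence preserves $\aleph_2$ and all stationary subsets of $\omega_2$, yet collapses $\aleph_1$; note $\aleph_2$-cc does not imply $\aleph_1$-preservation). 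If $\aleph_1$ is collapsed, then $\omega_2^V=\omega_1^{V[G]}$ and the points of $S^2_1{}^V$ have countable cofinality in $V[G]$, so the lifted guessing sequence is not a $\diamondsuit(S^2_1)$-sequence in the sense of $V[G]$ and Corollary \ref{diamond_dich_cor} cannot be invoked as you propose. The paper splits into cases here: if $\aleph_1$ is collapsed, the lifted diamond is $\diamondsuit$ on a stationary subset of $\omega_1^{V[G]}$, which implies CH in $V[G]$ and hence $\lim^1\mathbf{A}\neq0$ by Marde\v{s}i\'c--Prasolov; if $\aleph_1$ is preserved, then ordinals of cofinality $\omega_1$ keep cofinality $\omega_1$, so $S^2_1{}^V\subseteq S^2_1{}^{V[G]}$, the lifted diamond yields $\diamondsuit(S^2_1)$ in $V[G]$, and your intended application of Corollary \ref{diamond_dich_cor} goes through. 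Adding this case distinction repairs your argument and reproduces the paper's proof.
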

\begin{proof}
    We make use of the following standard preservation fact:
    \begin{fact} \label{lifting}
        Assume $\kappa$ is regular, $S\subseteq\kappa$ is stationary, and $\diamondsuit(S)$ holds. 
        If $\mathbb{P}$ is a forcing of cardinality $\kappa$ which preserves the regularity of $\kappa$ and the stationarity of any stationary $T\subseteq S$, then $\diamondsuit(S)$ still holds after forcing with $\mathbb{P}$.
    \end{fact}
    \begin{proof}
        We may assume that the underlying set of $\mathbb{P}$ is $\kappa$. 
        By $\diamondsuit(S)$, there are $B_\alpha\subseteq\alpha\times\alpha$ for each $\alpha<\kappa$ such that whenever $X\subseteq\kappa\times\kappa$, there are stationarily many $\alpha\in S$ such that $X\cap(\alpha\times\alpha)=B_\alpha$. 
        Now, in $V[G]$, we let $A_\alpha$ be \[\{\beta<\alpha\mid\exists \gamma\in G\cap\alpha\;(\gamma,\beta)\in B_\alpha\}.\]
        Given $X\subseteq\kappa$, let $\dot\tau$ be a name such that $\dot\tau(G)=X$. 
        Let $T\subseteq S$ be \[\{\alpha\in S\mid B_\alpha=\{(\beta,\gamma)\mid\beta,\gamma<\alpha\wedge \beta\Vdash \gamma\in\dot\tau\}\}.\]
        Note that $T$ is stationary in $V$ and therefore in $V[G]$. 
        Let \[C=\{\gamma<\kappa\mid \forall\alpha<\gamma\;\exists \beta\in G\cap\gamma\;(\beta\,||\,\alpha\in\dot\tau)\}.\]
        Then $C$ is a club in $V[G]$ and if $\gamma\in T\cap C$ then $A_\gamma=X\cap \gamma$. 
    \end{proof}
    Now, suppose $\mathbb{P}$ is as in Corollary \ref{forcings}. 
    If $\mathbb{P}$ collapses $\aleph_1$ then by Fact \ref{lifting}, $V^\mathbb{P}$ satisfies $\diamondsuit_{\omega_1}$ and, in particular, $CH$ so that $\lim^1\mathbf{A}\neq0$. 
    Otherwise, $\diamondsuit(S^2_1)$ holds in $V^\mathbb{P}$ by Fact \ref{lifting} and the proof is complete by Corollary \ref{diamond_dich_cor}. 
\end{proof}

For the Mitchell model, we do not even need $\diamondsuit$ in the ground model; in fact, $\diamondsuit^+_{\omega_2}$ holds in the Mitchell model. Recall that a $\diamondsuit^+_\kappa$ sequence is a sequence $\langle S_\alpha\mid \alpha<\kappa\rangle$ such that $|S_\alpha|=|\alpha|$ for each $\alpha$ and for each $A\subseteq\kappa$, there is a club $C$ such that for each $\alpha\in C$, both $A\cap\alpha$ and $C\cap\alpha$ are in $S_\alpha$. 
A standard fact is that $\diamondsuit^+$ implies $\diamondsuit(S)$ for every stationary $S\subseteq \kappa$; see, for instance, \cite[Theorem 7.14]{Kunen}. 
Let $\mathbb{P}_\alpha=\operatorname{Add}(\omega, \alpha)$ and let $\mathbb{F}_\alpha=\operatorname{Add}\left(\omega_1, 1\right)^{V^{\mathrm{P}_\alpha}}$. 
The conditions of Mitchell forcing $\mathbb{M}$ are the pairs $(p, f)$ for which
\begin{itemize}
\item $p \in \operatorname{Add}(\omega, \kappa)$
    \item $f$ is a partial function on $\kappa$ with countable support
    \item $f(\alpha)$ is a $\mathbb{P}_\alpha$-name for a condition in $\mathbb{F}_\alpha$.
\end{itemize}

The ordering on $\mathbb{M}$ is defined by $(q, g) \leq_{\mathbb{M}}(p, f)$ if and only if
\begin{itemize}
    \item $q \leq_{\mathbb{P}} p$,
    \item $\operatorname{dom}(g) \supseteq \operatorname{dom}(f)$, and
    \item for all $\alpha \in \operatorname{supp}(f)$ the restriction $q\left\lceil(\omega \times \alpha)\right.$ forces that $g(\alpha) \leq_{\mathbb{F}_\alpha} f(\alpha)$.
\end{itemize}

\begin{proposition} \label{diamond}
    For $\kappa$ inaccessible, $\diamondsuit^+_{\kappa}$ holds after Mitchell forcing up to $\kappa$. In particular, $\diamondsuit(S)$ holds whenever $S\subseteq\kappa$ is stationary in $V[G]$.
\end{proposition}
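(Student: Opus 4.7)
Working under the standard assumption that the ground model $V$ satisfies GCH, the plan is to construct the $\diamondsuit^+_\kappa$-sequence in $V[G]$ from the initial-segment factorization and chain condition of Mitchell forcing. For $\alpha < \kappa$, let $\mathbb{M}_\alpha$ denote Mitchell forcing with all coordinates restricted to lie below $\alpha$. Two structural facts are standard and essential: first, $\mathbb{M}$ is $\kappa$-cc; and second, there is a club $E \subseteq \kappa$ of $\alpha$ such that $\mathbb{M}_\alpha$ is a regular suborder of $\mathbb{M}$, so that $G_\alpha := G \cap \mathbb{M}_\alpha$ is $V$-generic and the quotient $\mathbb{M}/G_\alpha$ is well behaved. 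Finally, $\mathbb{M}$ collapses every $V$-cardinal strictly between $\omega_1$ and $\kappa$ to $\omega_1$ while preserving $\omega_1$ and $\kappa$.

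For $\alpha \in E$ with $\omega_1 < \alpha < \kappa$, I would define $S_\alpha := \mathcal{P}(\alpha) \cap V[G_\alpha]$ and set $S_\alpha = \emptyset$ for all remaining $\alpha$. The cardinality bound demanded by $\diamondsuit^+_\kappa$ is then immediate: since $\mathbb{M}_\alpha$ has cardinality $\alpha$ and satisfies the $\alpha$-chain condition, $|\mathcal{P}(\alpha)|^{V[G_\alpha]} = (\alpha^+)^{V[G_\alpha]} \leq (\alpha^+)^V$; and $(\alpha^+)^V$ lies strictly between $\omega_1$ and $\kappa$, hence gets collapsed to $\omega_1 = |\alpha|^{V[G]}$ in $V[G]$. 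So $|S_\alpha|^{V[G]} \leq |\alpha|^{V[G]}$, as required.

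To verify the guessing property, fix $A \subseteq \kappa$ in $V[G]$ and pick a nice name $\dot A$ for $A$. By $\kappa$-cc, the set of $\alpha \in E$ which are ``closure points'' of $\dot A$ (in the sense that the restriction of $\dot A$ to ordinals below $\alpha$ is an $\mathbb{M}_\alpha$-name) contains a club $C_0$, and for each such $\alpha$ we obtain $A \cap \alpha \in V[G_\alpha] \subseteq S_\alpha$. A second closure-point pass, applied now to a nice name for $C_0$ itself, produces a club $C \subseteq C_0$ with the additional property that $C \cap \alpha \in V[G_\alpha]$ for each $\alpha \in C$. Both $A \cap \alpha$ and $C \cap \alpha$ then lie in $S_\alpha$ for $\alpha \in C$, yielding $\diamondsuit^+_\kappa$. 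The ``in particular'' clause follows from the classical implication $\diamondsuit^+_\kappa \Rightarrow \diamondsuit(S)$ for every stationary $S \subseteq \kappa$, valid in $V[G]$ where the sequence was constructed.

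The main obstacle is really the pair of structural facts about Mitchell forcing on which everything rests: its $\kappa$-chain condition together with its collapse of every intermediate $V$-cardinal to $\omega_1$. Both depend on the careful projection analysis between the Cohen part $\operatorname{Add}(\omega, \kappa)$ and the countably-closed term-forcing component of $\mathbb{M}$, and are the genuine work. Once they are accepted as standard, the $\diamondsuit^+_\kappa$ construction reduces to the routine nice-name and closure-point calculation sketched above.
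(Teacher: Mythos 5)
Your proof follows essentially the same route as the paper: the same guessing sequence $S_\alpha=\mathcal{P}(\alpha)\cap V[G_\alpha]$, the $\kappa$-cc of Mitchell forcing, and a club of closure points of a name for $A$. Two small adjustments are worth making. First, the GCH assumption is not part of the proposition and is not needed: by inaccessibility of $\kappa$ the number of nice $\mathbb{M}_\alpha$-names for subsets of $\alpha$ is already less than $\kappa$ in $V$, so $|S_\alpha|\leq\aleph_1$ in $V[G]$ after the collapse, without computing $(\alpha^+)^{V[G_\alpha]}$. Second, your ``second closure-point pass'' is both unnecessary and, as stated, slightly off target: applying closure points to a name for $C_0$ yields $C_0\cap\alpha\in S_\alpha$ for $\alpha$ in the new club, not $C\cap\alpha\in S_\alpha$ for the new club $C$ itself, and iterating threatens a regress. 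The clean fix, which is exactly what the paper uses, is to note that $C_0$ is definable from the name (via the antichains deciding membership) and hence lies in $V$, so $C_0\cap\alpha\in V\subseteq V[G_\alpha]$ automatically and $C=C_0$ already witnesses $\diamondsuit^+_\kappa$ for $A$.
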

\begin{proof}
    For $\alpha\in S$, let $S_\alpha=\mathcal{P}(\alpha)\cap V[G_\alpha]$; we claim the $S_\alpha$ form a $\diamondsuit^+$ sequence. 
    Note that by inaccessibility of $\kappa$, $|S_\alpha|\leq\aleph_1$ in $V[G]$.
    Now, suppose that $x=\dot x(G)$ is a subset of $\kappa$. 
    For each $\alpha<\kappa$, let $A_\alpha$ be a maximal antichain of conditions deciding whether $\alpha\in\dot x$. 
    Since Mitchell forcing has the $\kappa$-cc, $|A_\alpha|<\kappa$. 
    Let 
    \[f(\alpha)=\sup\bigcup_{(p,g)\in A_\alpha}\operatorname{dom}(p)\cup\operatorname{dom}(g);\]
    observe that $f(\alpha)<\kappa$. 
    Let $C_f$ be the club of all ordinals closed under $f$. 
    Then for $\alpha\in C_f$, $x\cap \alpha\in\mathcal{P}(\alpha)\cap V[G_\alpha]$.
    Moreover, since $C_f\in V$, $C\cap \alpha\in\mathcal{P}(\alpha)\cap V[G_\alpha]$.
\end{proof}
We now note that in the Mitchell model, $\mathfrak{d}=\aleph_2$ since every set of reals of cardinality $\aleph_1$ is added at some proper initial stage and then we add a Cohen real. 
From Theorem \ref{diamond_thm} and Proposition \ref{diamond}, we obtain the following corollary:
\begin{corollary} \label{lim2_mitch}
    In the Mitchell model up to an inaccessible cardinal $\kappa$, $\lim^2\mathbf{A}\neq0$. 
\end{corollary}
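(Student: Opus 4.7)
The plan is to verify that the Mitchell extension $V[G]$ satisfies the two hypotheses of Theorem \ref{diamond_thm}, namely $\diamondsuit(S^2_1)$ and $\mathfrak{d}=\aleph_2$, and then invoke that theorem directly. Standard facts about Mitchell forcing tell us that $\kappa$ becomes $\aleph_2$ and $\aleph_1$ is preserved in $V[G]$, so $S^2_1$ as computed in $V[G]$ is a stationary subset of $\kappa$.

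For the diamond hypothesis, Proposition \ref{diamond} already delivers $\diamondsuit^+_\kappa$ in $V[G]$, which implies $\diamondsuit(S)$ for every stationary $S\subseteq \kappa$. Applying this to $S=S^2_1$ gives $\diamondsuit(S^2_1)$.

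For the hypothesis $\mathfrak{d}=\aleph_2$, note that $2^{\aleph_0}=\aleph_2$ in $V[G]$ provides the upper bound. For the matching lower bound, I would use the standard factorization of $\mathbb{M}$: given any family $F\subseteq\R$ of size $\aleph_1$ in $V[G]$, the $\kappa$-cc of $\mathbb{M}$ together with the fact that reals are added by the Cohen component lets us find an ordinal $\alpha<\kappa$ such that $F\in V[G_\alpha]$. The quotient forcing then adds a Cohen real over $V[G_\alpha]$, and any Cohen real is not $<^*$-dominated by any ground model real. Hence $F$ is not dominating, so $\mathfrak{d}>\aleph_1$, giving $\mathfrak{d}=\aleph_2$.

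The only slightly subtle step is the factorization argument that captures $\aleph_1$-many reals inside an intermediate model $V[G_\alpha]$; the main obstacle is simply bookkeeping a countable union of nice names and using $\kappa$-cc, but this is routine for Mitchell forcing. With both hypotheses in hand, Theorem \ref{diamond_thm} applied in $V[G]$ yields $\lim^2\mathbf{A}\neq 0$, as required.
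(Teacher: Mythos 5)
Your proposal is correct and matches the paper's argument: the paper likewise obtains $\diamondsuit(S^2_1)$ from Proposition \ref{diamond} and $\mathfrak{d}=\aleph_2$ from the observation that any $\aleph_1$-sized family of reals appears at a proper initial stage after which a Cohen (hence unbounded) real is added, then applies Theorem \ref{diamond_thm}.
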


We now summarize and give a number of models obtained as in Corollary \ref{forcings}. 
\begin{corollary}
    In each of the following models over a suitable ground model (for instance $L$), $\lim^2\mathbf{A}\neq0$:
    \begin{itemize}
        \item The Mitchell model;
        \item The Miller model;
        \item The Laver model;
        \item The Mathias model;
        \item The length $\omega_2$ Cohen model;
        \item The length $\omega_2$ Hechler model.
    \end{itemize}
\end{corollary}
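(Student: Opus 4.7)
The plan is: the Mitchell case is already handled by Corollary \ref{lim2_mitch}, so I only need to address the other five models. For each of them, I would take the ground model to be $L$, so that $\diamondsuit(S^2_1)$ holds and $\mathrm{CH}$ is available, and then verify that the generic extension satisfies both $\diamondsuit(S^2_1)$ and $\mathfrak{d}=\aleph_2$, at which point Theorem \ref{diamond_thm} delivers the conclusion. Preservation of $\diamondsuit(S^2_1)$ goes through Fact \ref{lifting}, which requires only that the forcing have size $\aleph_2$, preserve $\aleph_2$, and preserve stationarity of every stationary subset of $S^2_1$.

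For the length-$\omega_2$ Cohen and Hechler models, the forcing is a finite support ccc iteration of size $\aleph_2$ over a model of $\mathrm{CH}$. All three preservation requirements of Fact \ref{lifting} are immediate consequences of the ccc, and $\mathfrak{d}=\aleph_2$ holds in each extension by standard invariant computations: Hechler forcing adds a dominating real at each step, and in the length-$\omega_2$ Cohen iteration any $\aleph_1$-sized family of reals is absorbed into a proper initial segment of the iteration and is then rendered non-dominating by a subsequently added Cohen real.

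For the Miller, Laver, and Mathias models, each forcing is a countable support iteration of length $\omega_2$ of a proper forcing of size $\aleph_1$ over a model of $\mathrm{CH}$. The key input is Shelah's preservation theorem that, under $\mathrm{CH}$, such iterations are $\aleph_2$-cc. Combined with properness this guarantees preservation of $\aleph_1$, of $\aleph_2$, and of every stationary subset of $\omega_2$, so Fact \ref{lifting} transfers $\diamondsuit(S^2_1)$ from $L$ into each extension. The computation $\mathfrak{d}=\aleph_2$ is routine: Laver and Mathias forcing each add a dominating real, while the standard analysis of the Miller model yields $\mathfrak{b}=\aleph_1<\mathfrak{d}=\aleph_2$ because Miller forcing adds unbounded reals, forcing any $\aleph_1$-sized dominating family from an earlier stage of the iteration to fail. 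Applying Theorem \ref{diamond_thm} in each case gives $\lim^2\mathbf{A}\neq 0$.

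The main technical obstacle is preserving stationarity of subsets of $S^2_1$ across the three countable support iterations; I would handle this by invoking the $\aleph_2$-cc preservation theorem for countable support iterations of proper forcings of length $\omega_2$ under $\mathrm{CH}$ and applying the general fact that $\aleph_2$-cc forcings preserve stationary subsets of $\omega_2$. Once that is set up, the rest of the verifications are standard cardinal-invariant bookkeeping.
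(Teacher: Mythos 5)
Your proposal is correct and follows essentially the route the paper intends: the Mitchell case via Corollary \ref{lim2_mitch}, and the remaining models by verifying the hypotheses of Fact \ref{lifting} (size $\aleph_2$, preservation of $\aleph_1$, $\aleph_2$, and of stationary subsets of $S^2_1$ --- via ccc for the Cohen and Hechler iterations and via the $\aleph_2$-cc of countable support proper iterations under CH for Miller, Laver, and Mathias) together with the standard computations of $\mathfrak{d}=\aleph_2$, so that Theorem \ref{diamond_thm} applies. The paper leaves these verifications implicit, and your filling-in of them is accurate.
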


\section{Nonvanishing from weak diamonds} \label{wd_section}
In this section, we prove an upgraded version of Theorem \ref{diamond_thm} that gives a strict generalization of \cite[Theorem A(2)]{SNVHDL} removing the hypothesis of $w\diamondsuit_{\omega_1}$. 
We then sketch how the same method can be used to produce a new proof of \cite[Theorem A(1)]{SNVHDL}. 
We do not have any additional intended applications of this theorem, though the hypotheses are consistent for each $n$ (see, for instance, \cite[Theorem 4.5]{NVHDL}). 
\begin{theorem} \label{weak_diamond_theorem}
$\mathfrak{d}=\aleph_n$ and $\bigwedge_{i=1}^{n-1}w\diamondsuit(S^{i+1}_i)$ implies that $\lim^n\mathbf{A}\neq0$. 
\end{theorem}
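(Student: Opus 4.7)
The plan is to generalize the proof of Theorem \ref{diamond_thm} in two directions simultaneously: from $n=2$ to arbitrary $n$, and from $\diamondsuit$ to $w\diamondsuit$. I would proceed by induction on $n$. Fix an increasing, continuous, internally approaching elementary chain $\langle M_\alpha\mid\alpha<\omega_n\rangle$ of substructures of $H_\theta$ of size $\aleph_{n-1}$ with $\omega^\omega=\bigcup_{\alpha<\omega_n}(M_\alpha\cap\omega^\omega)$, and build an $n$-coherent family $\Phi$ by recursion along this chain. At stages whose cofinality is less than $\omega_{n-1}$, the partial family $\Phi\upharpoonright M_\alpha$ has fewer than $\aleph_n$ indices, hence is trivial by Theorem \ref{gob_vanish} and extends to $M_{\alpha+1}$ via Proposition \ref{triv_ext_prop}. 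The substantive work occurs at stages $\alpha\in S^n_{n-1}$.

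At such a stage, fix $x\in M_{\alpha+1}\cap\omega^\omega$ not $<^*$-dominated by any element of $M_\alpha$; this is possible because $\mathfrak{d}=\aleph_n$ and $M_\alpha\cap\omega^\omega$ has $<^*$-cofinality $\omega_{n-1}$. Prepare two candidate $n$-coherent extensions of $\Phi$ to $M_{\alpha+1}$: the extension $\Phi^0$ is the ``plain'' extension built from a guessed partial trivialization of $\Phi\upharpoonright M_\alpha$, while $\Phi^1$ differs from $\Phi^0$ at the new indices involving $x$ by the coboundary of an auxiliary $(n-1)$-coherent family $\Theta$ indexed by a cofinal subset of $M_\alpha\cap\omega^\omega$ that fails to trivialize below $x$. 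Both extensions are $n$-coherent, but any extension of a fixed partial trivialization can trivialize at most one of them: exactly as in the proof of Theorem \ref{diamond_thm}, the difference of two such trivializations would itself be a trivialization of $\Theta$ below $x$. Via a coherent family of bijections $M_\alpha\leftrightarrow\alpha$, encode each potential partial trivialization as an element of $2^\alpha$, and define $F\colon 2^{<\omega_n}\to 2$ so that $F(\sigma)$ is the unique bit $b$ (if it exists; otherwise set $F(\sigma)=0$) for which $\Phi^b$ admits the decoded partial trivialization as an extension. Let $g$ be a $w\diamondsuit(S^n_{n-1})$ guess for $F$, and always choose the extension $\Phi^{1-g(\alpha)}$. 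For any would-be global trivialization $\Psi$ with code $h\colon\omega_n\to 2$, the set $\{\alpha\in S^n_{n-1}\mid g(\alpha)=F(h\upharpoonright\alpha)\}$ is stationary; at each such $\alpha$ we made the opposite choice from the one that would have accommodated $\Psi$, so $\Psi$ cannot trivialize $\Phi$---a contradiction.

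The main obstacle is producing the auxiliary family $\Theta$. For $n=2$, $\Theta$ is delivered directly by Lemma \ref{nontriv_below} via Todor\v{c}evi\'c's Theorem \ref{walks_thm}. For $n>2$, the natural approach is to apply the theorem inductively inside $M_{\alpha+1}$: since $M_\alpha\cap\omega^\omega$ has $<^*$-cofinality $\omega_{n-1}$ and the weak diamonds $w\diamondsuit(S^k_{k-1})$ for $k<n$ reflect down to $M_{\alpha+1}$, the inductive hypothesis yields an $(n-1)$-coherent family on a cofinal subset of $M_\alpha\cap\omega^\omega$ which fails to trivialize in a sense strong enough to survive extension by the new index $x$. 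A secondary subtlety is making these local codings cohere with the global coding that feeds the weak diamond, so that the single bit $F(\sigma)$ genuinely witnesses what it purports to; this bookkeeping is where the proof will be most technical.
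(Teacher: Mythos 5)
Your overall architecture matches the paper's: a recursion along a chain of models, two candidate coherent extensions at stationary stages (one shifted by the coboundary of an auxiliary family that is nontrivial below an undominated real), and a weak diamond to defeat every potential trivialization. But the place you defer as ``the natural approach is to apply the theorem inductively\dots in a sense strong enough to survive extension by the new index $x$'' is precisely where the new mathematics of the general case lives, and your sketch does not supply it. To run the induction one must first \emph{define} what ``$n$-trivial below $g$'' means for $n>1$ (the paper's Definition \ref{triv_below_def}, where the trivializing $(n-1)$-family has domains $I_{\bigwedge\vec y\wedge g}$, together with the reformulation as extendability to $X\cup\{g\}$, Proposition \ref{nontrivial_below_prop}), and then strengthen the statement being proved by induction to carry the parameter $g$ throughout (Lemma \ref{last}: either $g$ is a real undominated by $M$, or $g\equiv\omega$ and $\operatorname{cof}(M\cap\omega^\omega,<^*)=\aleph_n$). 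The plain inductive statement ``there is a nontrivial $(n-1)$-coherent family indexed by $M_\alpha\cap\omega^\omega$'' is too weak: you need nontriviality below $y\wedge g$, which is strictly stronger. Moreover, to verify the hypotheses one level down you need that $\{z\wedge g\mid z\in M'\}$ still has $<^*$-cofinality $|M'|$, so that the lower-level construction can again find, at each of its stages, a meet that is undominated; this is the paper's Lemma \ref{non}, a genuinely nontrivial combinatorial argument, and nothing in your proposal plays its role. (Also, the ``at most one branch accommodates'' claim for $n>2$ is not literally ``exactly as in Theorem \ref{diamond_thm}'': one must form the $(n-1)$-family $\Upsilon_{\vec z}=\Psi^0_{\vec z{}^\frown y}-\Psi^1_{\vec z{}^\frown y}$ and compute $d\Upsilon$, a routine but necessary generalization.)

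Two further structural points would need repair. First, your setup $\omega^\omega=\bigcup_{\alpha<\omega_n}(M_\alpha\cap\omega^\omega)$ with $|M_\alpha|=\aleph_{n-1}$ presupposes $2^{\aleph_0}\leq\aleph_n$, which does not follow from $\mathfrak{d}=\aleph_n$ plus the weak diamonds (unlike the $\diamondsuit(S^2_1)$ case); the paper instead builds the family on $M\cap\omega^\omega$ for a single $M$ of size $\aleph_n$ containing a dominating family and finishes by cofinal invariance of derived limits. Second, your application of $w\diamondsuit$ as stated is circular: you choose the extension $\Phi^{1-g(\alpha)}$ as you go, but $F$ must be defined on all of $2^{<\omega_n}$ before the guess $g$ exists, and your $F(\sigma)$ refers to the two candidate extensions at stage $|\sigma|$, which in an as-you-go construction depend on $g\upharpoonright\alpha$. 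The fix is not mere bookkeeping but a reorganization: build the entire binary tree of families $\Phi^s$ for $s\in2^{<\omega_n}$ in advance, with $\Phi^s$ depending only on $s\upharpoonright S$, observe that a coded attempted trivialization determines the unique node it trivializes, define $F$ from that, and only then apply $w\diamondsuit$ to select the branch $\Phi^h$ --- this is exactly how the paper's Lemma \ref{Jeff} is arranged.
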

We first recall the definition of $w\diamondsuit(S)$. 
\begin{definition}
    Suppose $\kappa$ is a regular cardinal and $S\subseteq \kappa$ is stationary. $w\diamondsuit(S)$ asserts that whenever $F\colon 2^{<\kappa}\to2$, there is a $g\in 2^\kappa$ such that whenever $b\in 2^\kappa$, 
    \[\{\alpha\in S\mid F(b\upharpoonright\alpha)=g(\alpha)\}\]
    is stationary. 
\end{definition}
For instance, if $\langle A_\alpha\mid\alpha<\kappa\rangle$ is a $\diamondsuit(S)$ sequence, then setting $g(\alpha)=F(A_\alpha)$ witnesses $w\diamondsuit(S)$. 

We now extend Definition \ref{coherent_below_g_def} to allow for arbitrary $n$. 
Note that we allow the parameter $g$ to take the value $\omega$; for $g$ the constant function at $\omega$ an $n$-coherent family is nontrivial below $g$ if and only if it is nontrivial as in Definition \ref{coherent_def}. 
\iffalse
Contrast to the $n$-coherent families below $g$ of, for instance, \cite[Definition 4.1]{SVHDLWLC}. 
\fi
\begin{definition} \label{triv_below_def}
If $g\in(\omega+1)^\omega$ and $X\subseteq\omega^\omega$, an $n$-coherent family $\Phi$ indexed by $X$ is \emph{$n$-trivial below $g$} if 
\begin{itemize}
    \item $n=1$ and there is a $\Psi\colon I_g\to\mathbb{Z}$ such that $\Psi=^*\Phi_x$ for all $x\in X$. 
\item  $n>1$ and there is are alternating $\langle\Psi_{\vec y}\mid \vec y\in X^{n-1}\rangle$ such that for all $\vec y\in X^{n-1}$, $\Psi_{\vec y}\colon I_{\bigwedge\vec y\wedge g}\to\mathbb{Z}$, and for all $\vec x\in X^{n}$, $\Phi_{\vec x}=^*d\Psi_{\vec x}$. 
\end{itemize}
\end{definition}

Note that if $g\leq h$ pointwise and $\Phi$ is nontrivial below $g$ then $\Phi$ is nontrivial below $h$. A convenient reformulation of a coherent family being nontrivial below $g$ is the following:
\begin{proposition} \label{nontrivial_below_prop}
    Suppose $X\subseteq\omega^\omega$, $g\in\omega^\omega$, and $\Phi$ is an $n$-coherent family indexed by $X$. 
    $\Phi$ is trivial below $g$ if and only if $\Phi$ extends to an $n$-coherent family indexed by $X\cup\{g\}$. 
    In particular, if $g\in X$ then $\Phi$ is $n$-trivial below $g$. 
\end{proposition}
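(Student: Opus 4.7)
The plan is to establish both directions of the equivalence by essentially transferring data between the extension at $g$ and a trivializing $(n-1)$-family. The $n=1$ case is direct: if $\Psi \colon I_g \to \mathbb{Z}$ witnesses triviality below $g$, set $\Phi^*_g = \Psi$ and $\Phi^*_x = \Phi_x$ for $x \in X$; the coherence condition $d\Phi^*_{g,x} = \Phi^*_x - \Phi^*_g =^* 0$ is exactly the triviality assumption. Conversely, any coherent extension gives a $\Phi^*_g$ which serves as the trivializer.

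For $n \geq 2$, the forward direction proceeds as follows. Given a trivialization $\Psi = \langle \Psi_{\vec y} \mid \vec y \in X^{n-1}\rangle$ of $\Phi$ below $g$, extend $\Psi$ arbitrarily to an alternating $(n-1)$-family $\Psi^*$ indexed by $X \cup \{g\}$ (choosing, for instance, the zero function on tuples involving $g$, then alternating). Define
\[
\Phi^*_{\vec x} = \begin{cases} \Phi_{\vec x} & \vec x \in X^n \\ d\Psi^*_{\vec x} & \vec x \in (X\cup\{g\})^n \setminus X^n. \end{cases}
\]
On $X^n$ we have $\Phi^*_{\vec x} = \Phi_{\vec x} =^* d\Psi_{\vec x} = d\Psi^*_{\vec x}$, and on the remaining tuples we have literal equality $\Phi^*_{\vec x} = d\Psi^*_{\vec x}$. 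Since $\Psi^*$ is alternating so is $d\Psi^*$, and so $\Phi^*$ is alternating. Moreover, for any $\vec z \in (X \cup \{g\})^{n+1}$,
\[
d\Phi^*_{\vec z} =^* d(d\Psi^*)_{\vec z} = 0,
\]
using $d^2 = 0$ on the right (which is a purely formal identity, not requiring coherence). So $\Phi^*$ is $n$-coherent.

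For the backward direction, suppose $\Phi^*$ is an $n$-coherent extension of $\Phi$ to $X \cup \{g\}$. Define $\Psi_{\vec y} = \Phi^*_{(g,\vec y)}$ for $\vec y \in X^{n-1}$; the domain $I_{(g,\vec y)} = I_{g \wedge \bigwedge \vec y}$ matches Definition \ref{triv_below_def}, and $\Psi$ inherits alternation from $\Phi^*$ (a permutation of $\vec y$ extends to a permutation of $(g,\vec y)$ fixing the first coordinate, with the same sign). For $\vec x \in X^n$, applying coherence of $\Phi^*$ to $\vec z = (g,\vec x)$ and expanding $\vec z^0 = \vec x$, $\vec z^{i+1} = (g,\vec x^i)$ yields
\[
0 =^* d\Phi^*_{(g,\vec x)} = \Phi_{\vec x} - \sum_{i=0}^{n-1}(-1)^i \Phi^*_{(g,\vec x^i)} = \Phi_{\vec x} - d\Psi_{\vec x},
\]
so $\Psi$ trivializes $\Phi$ below $g$. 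The ``in particular'' clause is immediate: if $g \in X$, then $X \cup \{g\} = X$, so $\Phi$ itself is the required coherent extension and the forward direction of the established equivalence gives triviality below $g$.

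The only potentially delicate step is sign-bookkeeping in the forward direction when $g$ appears in a tuple at a position other than $0$, but since we are using the general identity $d^2 = 0$ that holds for any alternating family, no case analysis on the position of $g$ is actually needed; this is the main reason the argument is so short once framed in terms of the coboundary operator.
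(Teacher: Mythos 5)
Your proof is correct and follows essentially the same route as the paper: the forward direction (triviality below $g$ gives an extension) is the identical construction via an arbitrary alternating extension of $\Psi$ and setting $\Phi^*_{\vec x}=d\Psi^*_{\vec x}$ off $X^n$, and the backward direction reads the trivialization off the extension, the only difference being that you place $g$ in the first coordinate (so no sign factor) whereas the paper appends it and uses $\Psi_{\vec y}=(-1)^{n+1}\Phi^*_{\vec y{}^\frown g}$. Your explicit $d^2=0$ verification of coherence of $\Phi^*$ fills in the step the paper labels ``readily verified.''
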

\begin{proof}
    If $n=1$ then a trivialization is be definition an extension of $\Phi$ to $X\cup\{g\}$ so we assume $n>1$. 

    Suppose that $\Phi$ extends to some $\Phi^*$ indexed by $X\cup\{g\}$. 
    Then $\Psi_{\vec y}=(-1)^{n+1}\Phi^*_{\vec y^\frown g}$ is a trivialization of $\Phi$ below $g$ as for any $\vec x\in X^n$
    \[\sum_{i=0}^{n-1}(-1)^i\Psi_{\vec x^i}=\sum_{i=0}^{n-1}(-1)^i\Phi^*_{(\vec x^i)^\frown g}=\left(\sum_{i=0}^{n}(-1)^i\Phi^*_{(\vec x^\frown g)^i}\right)-(-1)^n\Phi_{\vec x}=^*(-1)^{n+1}\Phi_{\vec x}\]
    where the first equality is the definitions of $\Psi$, the second is adding $0$, and the third is coherence of $\Phi^*$. 

    Now suppose that $\Phi$ is trivial below $g$ as witnessed by $\langle\Psi_{\vec y}\mid\vec y\in X^{n-1}\rangle$. 
    Let $\Psi^*$ to be an arbitrary alternating extension of $\Psi$ to $X\cup\{g\}$ and set 
    \[\Phi^*_{\vec y}=\left\{\begin{array}{cc}
         \Phi_{\vec y}&\vec y\in X^n  \\
         d\Psi^*_{\vec y}&\text{otherwise} 
    \end{array}\right..\].
    Then $\Phi^*$ is readily verified to be an $n$-coherent family indexed by $X\cup\{g\}$ extending $\Phi$. 
\end{proof}
We now prove the following variation of \cite[Theorem 4.1]{SHDLST}. 
\begin{lemma} \label{Jeff}
    Suppose $n\geq1$, $S\subseteq\omega_{n+1}$ is stationary, $w\diamondsuit(S)$ holds, $\langle A_\alpha\mid\alpha<\omega_{n+1}\rangle$ is an increasing continuous chain of subsets of $\omega^\omega$, and $g\colon\omega\to\omega+1$ is such that 
    \begin{itemize}
        \item $A_\alpha$ is closed under finite joins;
        \item $|A_\alpha|=\aleph_n$ for all $\alpha<\omega_{n+1}$;
        \item if $\alpha\in S$, there is an $n$-coherent family $\Phi$ indexed by $A_\alpha$ and $y\in A_{\alpha+1}$
        such that $\Phi$ is nontrivial below $y\wedge g$.
    \end{itemize}
    Then there is a $(n+1)$-coherent family indexed by $\bigcup_\alpha A_\alpha$ which is nontrivial below $g$.
\end{lemma}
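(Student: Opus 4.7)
The plan is to build the desired $(n+1)$-coherent family $\Xi$ on $\bigcup_\alpha A_\alpha$ by recursion on $\alpha < \omega_{n+1}$, using at each $\alpha \in S$ a single bit $g_{\mathrm{wd}}(\alpha)$ from a weak diamond sequence to select between two candidate $(n+1)$-coherent extensions of $\Xi\upharpoonright A_\alpha^{n+1}$ to $A_{\alpha+1}^{n+1}$. Since $|A_\alpha|=\aleph_n$, Theorem \ref{gob_vanish} ensures $\Xi\upharpoonright A_\alpha^{n+1}$ is $(n+1)$-trivial at every stage, so stages $\alpha\notin S$ are handled by extending an arbitrary trivialization via Proposition \ref{triv_ext_prop} and limit stages by continuity.

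At each $\alpha\in S$, canonically (e.g., via a fixed well-ordering of witnesses) choose a trivialization $\Psi^\alpha$ of $\Xi\upharpoonright A_\alpha^{n+1}$ and extend it alternatingly to $A_{\alpha+1}^n$. The two candidate extensions of $\Xi$ are a ``straight'' extension with $\Xi_{\vec w^\frown y^\alpha}=d\Psi^\alpha_{\vec w^\frown y^\alpha}$ and a ``twisted'' extension with $\Xi_{\vec w^\frown y^\alpha}=d\Psi^\alpha_{\vec w^\frown y^\alpha}+\Phi^\alpha_{\vec w}$ for $\vec w \in A_\alpha^n$, with all remaining new tuples filled by $d\Psi^\alpha$; both are $(n+1)$-coherent since $d\Phi^\alpha=^*0$ and $d\circ d=0$. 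The crucial uniqueness fact is that at most one of the two options admits a trivialization extending any given trivialization $\tilde\Psi$ of $\Xi\upharpoonright A_\alpha^{n+1}$: if trivializations $\Psi_0,\Psi_1$ of the two extensions agreed on $A_\alpha^n$, then $R:=\Psi_1-\Psi_0$ would vanish on $A_\alpha^n$ and the $(n-1)$-family $\tilde R_{\vec w}:=R_{\vec w^\frown y^\alpha}$ would satisfy $d\tilde R_{\vec w'}=^*\Phi^\alpha_{\vec w'}$ on $I_{\vec w'\wedge y^\alpha\wedge g}$, trivializing $\Phi^\alpha$ below $y^\alpha\wedge g$ and contradicting the hypothesis.

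For the weak diamond step, fix a coding identifying elements of $2^{\omega_{n+1}}$ with pairs $(h,\tilde\Psi)$, where $h\in 2^{\omega_{n+1}}$ is a putative choice sequence and $\tilde\Psi$ is a potential $\mathbb{Z}$-valued $n$-family on $\bigcup_\alpha A_\alpha$, arranged so that $b\upharpoonright\alpha$ decodes to $(h\upharpoonright\alpha,\tilde\Psi\upharpoonright A_\alpha^n)$. Define $F\colon 2^{<\omega_{n+1}}\to 2$ at $b\upharpoonright\alpha$ by simulating the recursion above using $h$ in place of $g_{\mathrm{wd}}$ up through stage $\alpha$, then outputting the opposite of the unique option (if any) at stage $\alpha$ consistent with $\tilde\Psi\upharpoonright A_\alpha^n$. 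Let $g_{\mathrm{wd}}$ be provided by $w\diamondsuit(S)$ for this $F$ and run the actual construction. If $\Psi^*$ were a trivialization of $\Xi$ below $g$ with $b^*$ coding $(g_{\mathrm{wd}},\Psi^*)$, then the simulation inside $F(b^*\upharpoonright\alpha)$ reproduces the actual $\Xi\upharpoonright A_\alpha^{n+1}$, and since $\Psi^*$ trivializes the actual $\Xi\upharpoonright A_{\alpha+1}^{n+1}$ at every $\alpha$, the unique consistent option would be $g_{\mathrm{wd}}(\alpha)$; hence $F(b^*\upharpoonright\alpha)=1-g_{\mathrm{wd}}(\alpha)\neq g_{\mathrm{wd}}(\alpha)$ for every $\alpha\in S$, contradicting the stationarity promised by weak diamond.

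The main obstacle will be the bookkeeping: arranging the coding so that $F(b\upharpoonright\alpha)$ depends only on $\alpha$-initial data, the canonical $\Psi^\alpha$ can be reconstructed inside $F$ from $h\upharpoonright\alpha$ and the hypothesis data alone, and $b^*\upharpoonright\alpha$ faithfully recovers $\Psi^*\upharpoonright A_\alpha^n$ on a club of $\alpha$. These are standard enumeration arguments exploiting continuity of $\langle A_\alpha\rangle$ and $|A_\alpha|=\aleph_n$; the genuine mathematical content lies in the uniqueness observation, after which the weak diamond chase follows the pattern established in \cite{NVHDL} and \cite{SNVHDL}.
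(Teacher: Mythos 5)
Your overall architecture is the same as the paper's: a length-$\omega_{n+1}$ recursion with a binary choice at each stage of $S$ between a ``straight'' extension $d\Psi^\alpha$ and a ``twisted'' extension obtained by adding the nontrivial $n$-coherent family $\Phi^\alpha$; the key claim that a given trivialization below $g$ of the stage-$\alpha$ family extends to a trivialization below $g$ of at most one of the two options (your $R=\Psi_1-\Psi_0$ computation is exactly the paper's); and a weak-diamond diagonalization against an $F$ that outputs the ``bad'' bit. Your presentational variant --- coding the branch $h$ together with the candidate trivialization into the input of $F$ and simulating the canonical recursion along $h$, rather than building the whole tree $\langle\Phi^s\mid s\in2^{<\omega_{n+1}}\rangle$ in advance --- is equivalent, and if anything makes the well-definedness of $F$ slightly more transparent.

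There is, however, one concrete error in your twisted option. You twist only tuples of the form $\vec w^\frown y^\alpha$ with $\vec w\in A_\alpha^n$ and fill \emph{all} remaining new tuples with $d\Psi^\alpha$. That family need not be $(n+1)$-coherent: take any $z\in A_{\alpha+1}\setminus A_\alpha$ with $z\neq y^\alpha$ (such $z$ exist; $|A_{\alpha+1}\setminus A_\alpha|$ is typically $\aleph_n$) and any $\vec w\in A_\alpha^n$, and consider the $(n+2)$-tuple $\vec x=\vec w^\frown y^\alpha{}^\frown z$. Every facet of $\vec x$ is a new tuple, and exactly one of them, namely $\vec w^\frown y^\alpha$, carries the twist; hence $d\Xi_{\vec x}=\pm\Phi^\alpha_{\vec w}$ on $I_{\vec w\wedge y^\alpha\wedge z}$, which need not be $=^*0$. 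The repair is the paper's formula: extend both $\Psi^\alpha$ and $\Phi^\alpha$ by $0$ to tuples meeting $A_{\alpha+1}\setminus A_\alpha$ and declare the twisted option to be $d[\Psi^\alpha+\Phi^\alpha]$ on all new tuples --- equivalently, twist every new tuple having exactly one coordinate outside $A_\alpha$ by $\pm\Phi^\alpha$ of its old facet, not only those whose new coordinate is $y^\alpha$. Coherence is then immediate, since on all of $A_{\alpha+1}^{n+1}$ the family is $=^*d[\Psi^\alpha+\Phi^\alpha]$ and $d\circ d=0$, and your uniqueness argument is untouched because it only evaluates the two options at tuples of the form $\vec w^\frown y^\alpha$. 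With that local fix, the rest of your outline (the bookkeeping caveats you flag included) matches the paper's proof.
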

\begin{proof}
    By recursion on the length of $s$, we define for each $s\in 2^{<\omega_{n+1}}$ an $(n+1)$-coherent family $\Phi^s$ indexed by $A_{|s|}$, ensuring additionally that 
    \begin{itemize}
        \item if $t\sqsubseteq s$ then $\Phi^s\upharpoonright A_{|t|}=\Phi^t$;
        \item if $t\upharpoonright S=s\upharpoonright S$ and $|t|=|s|$ then $\Phi^t=\Phi^s$.
    \end{itemize}
    
    If $|s|\not\in S$, then since $\Phi^s$ is trivial by Theorem \ref{gob_vanish}, we may extend $\Phi^s$ to an $(n+1)$-coherent family indexed by $A_{|s|+1}$ using Proposition \ref{triv_ext_prop}. 
    We choose the same extension for both $\Phi^{s^\frown 0}$ and $\Phi^{s^\frown 1}$. 
    Now suppose $|s|\in S$. By Theorem \ref{gob_vanish}, we may fix a trivialization $\Psi$ of $\Phi^s$. 
    By hypothesis, we may fix an $n$-coherent family $\Theta$ indexed by $A_{|s|}$ and $y\in A_{|s|+1}$ such that $\Theta$ is nontrivial below $y\wedge g$.
    For $\vec x\in (A_{\alpha+1})^{n+1}\setminus (A_\alpha)^n$, we let
    \[\Phi_{\vec x}^{s^\frown0}=d\Psi_{\vec x}\]
    and
    \[\Phi_{\vec x}^{s^\frown 1}=d[\Psi+\Theta]_{\vec x},\]
    where we define $\Psi$ and $\Theta$ as the constant $0$ functions at indices not all in $A_{|s|}$ (note that this extension of $\Theta$ \emph{is not} coherent). 

    \begin{claim}
        If $\Psi'$ is any trivialization of $\Phi^s$ below $g$, there is at most one $i\in\{0,1\}$ such that $\Psi'$ extends to a trivialization of $\Phi^{s^\frown i}$ below $g$.
    \end{claim}
    \begin{proof}
        Suppose not and let $\Psi^0$ and $\Psi^1$ be extensions of $\Psi'$ which trivialize $\Phi^{s^\frown 0}$ and $\Phi^{s^\frown 1}$ below $g$ respectively. 
        The proof is slightly different depending on whether $n=1$ or $n>1$. 
        If $n=1$, let $\Upsilon\colon I_{y\wedge g}\to \mathbb{Z}$ be $\Psi^0_y-\Psi^1_y$. 
        Then for each $z\in A_{|s|}$,
        \[\begin{aligned}
            \Upsilon&=\Psi^0_y-\Psi^1_y\\
            &=(\Psi^0_y-\Psi'_z)-(\Psi^1_y-\Psi'_z)\\
            &=^*\Phi_{zy}^{s^\frown 0}-\Phi_{zy}^{s^\frown 1}\\
            &=-\Psi_z+(\Psi_z+\Theta_z)\\
            &=\Theta_z,
        \end{aligned}\]
        where the first line is the definition of $\Upsilon$, the second is adding $0$, the third is that $\Psi^0$ and $\Psi^1$ are trivializations of $\Phi^0$ and $\Phi^1$ below $g$ extending $\Psi'$ respectively, and the last is the definition of $\Phi$.
        So $\Upsilon$ trivializes $\Theta$ below $y\wedge g$, contrary to our choice of $\Theta$. 

        Now, suppose $n>1$. For each $\vec z\in (A_{|s|})^{n-1}$, let $\Upsilon_{\vec z}\colon I_{\bigwedge\vec z\wedge y\wedge g}\to\mathbb{Z}$ be $\Psi^0_{\vec z^\frown y}-\Psi^1_{\vec z^\frown y}$. 
        Then for each $\vec x\in (A_{|s|})^n$,
        \[\begin{aligned}
            d\Upsilon_{\vec x}&=
        \sum_{i=0}^{n-1}(-1)^i(\Psi^0_{(\vec x^i)^\frown y}-\Psi^1_{(\vec x^i)^\frown y})\\
        &=(d\Psi^0_{\vec x^\frown y}-(-1)^n\Psi^0_{\vec x})-(d\Psi^1_{\vec x^\frown y}-(-1)^n\Psi^1_{\vec x})\\
        &=^*\Phi^{s^\frown 0}_{\vec x^\frown y}-\Phi^{s^\frown 1}_{\vec x^\frown y}\\
        &=(-1)^n\Psi_{\vec x}-(-1)^n(\Psi_{\vec x}+\Theta_{\vec x})\\
        &=(-1)^{n+1}\Theta_{\vec x},
        \end{aligned}\]
        where the first line is the definition of $d$ and $\Upsilon$, the second is the definition of $d$ applied to $\Psi^0$ and $\Psi^1$, the third is that $\Psi^0$ and $\Psi^1$ extend $\Psi'$ and trivialize $\Phi^{s^\frown 0}$ and $\Phi^{s^\frown 1}$ respectively, the fourth is the definition of $\Phi$, and the last is canceling like terms with opposite signs. 
        So $(-1)^{n+1}\Upsilon$ trivializes $\Theta$ below $y\wedge g$, contrary to our choice of $\Theta$.
    \end{proof}
    
    We claim that for some $h\in2^{\omega_1}$, $\Phi^h=\bigcup_{\alpha<\omega_{n+1}} \Phi^{h\upharpoonright\alpha}$ is nontrivial below $g$. 
    
    For any reasonable choice of encoding attempts at trivialization, if $b\in2^{<\omega_{n+1}}$ codes a trivialization of $\Phi^s$ for some $s$ with $|b|=|s|\in S$, then note that there is a unique such $\Phi^s$. Set $F(b)\in\{0,1\}$ to be such that the trivialization coded by $b$ does not extend to a trivialization of $\Phi^{s^\frown F(b)}$; we define $F(b)$ arbitrarily otherwise. 
    Now by $w\diamondsuit(S)$, we may fix some $h\in2^{\omega_{n+1}}$ such that for all $b\colon\omega_{n+1}\to 2$, there are stationarily many $\alpha\in S$ such that $h(\alpha)=F(b\upharpoonright\alpha)$; it is for this $h$ that we claim $\Phi^h$ is nontrivial. 
    Indeed, if $b$ codes a trivialization for $\Phi^h$, we may fix some $\alpha\in S$ such that $h(\alpha)=F(b\upharpoonright\alpha)$. 
    But then $b\upharpoonright \alpha$ extends to a trivialization of $\Phi^{(h\upharpoonright\alpha)^\frown F(b\upharpoonright\alpha
    )}$, contrary to our choice of $F(b\upharpoonright\alpha)$. 
\end{proof}
We will need the following, which follows by the same proof as \cite[Proposition 4.3]{SNVHDL}. 
The hypotheses are easily achieved with an $M$ of size $\kappa$ for any regular $\kappa<\mathfrak{d}$ by the observation that for any regular $\kappa$, $\operatorname{cof}([\kappa]^{<\kappa},\subseteq)=\kappa$.
We note the hypotheses are automatically satisfied by any $M$ which is the union of an internally approaching chain of regular length $|M|$. 
\begin{lemma} \label{non}
    Suppose $M\prec H_\theta$ and for all $A\in[M]^{<|M|}$ there is $B\in M$ with $A\subseteq B\subseteq M$. 
    Suppose moreover that $g\in\omega^\omega$ is weakly increasing and not $<^*$-dominated by any element of $M$. 
    Then $\operatorname{cof}(\{y\wedge g\mid y\in M\cap\omega^\omega\},<^*)=|M|$.
\end{lemma}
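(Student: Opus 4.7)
\emph{Proof plan.} The upper bound is immediate, since $|\{y\wedge g : y\in M\cap\omega^\omega\}|\leq|M|$. For the lower bound, I would argue by contradiction: suppose a $<^*$-cofinal family $\{y_i\wedge g : i<\lambda\}$ exists with $\lambda<\kappa:=|M|$ and $y_i\in M\cap\omega^\omega$. The plan is to produce $z\in M\cap\omega^\omega$ satisfying $z\wedge g\not<^* y_i\wedge g$ for every $i<\lambda$; applying the cofinality hypothesis to $z$ itself will then yield the contradiction.

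First, observe that $g(n)\to\infty$, since $g$ is weakly increasing and cannot be $<^*$-dominated by any constant function in $M$; moreover, for each $y\in M$, applying the non-domination hypothesis to $y+1\in M$ gives that $T_y:=\{n\mid g(n)>y(n)\}$ is infinite. A pointwise computation now shows that if $z\in M$ satisfies $z(n)>y_i(n)$ cofinitely for each $i<\lambda$, then on the infinite set $T_{y_i}\cap\{n\mid z(n)>y_i(n)\}$ (the intersection of an infinite set with a cofinite one) one has $\min(z,g)(n)\geq y_i(n)+1>y_i(n)=\min(y_i,g)(n)$, so $z\wedge g\not\leq^* y_i\wedge g$ for every $i<\lambda$. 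But by cofinality applied to $z$, some $j<\lambda$ must satisfy $z\wedge g\leq^* y_j\wedge g$; taking $i=j$ gives the required contradiction. It thus suffices to find $z\in M\cap\omega^\omega$ that pointwise-eventually-dominates every $y_i$.

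To obtain such a $z$, I would apply the covering hypothesis to $A:=\{y_i\mid i<\lambda\}\in[M]^{<\kappa}$ to get $B\in M$ with $A\subseteq B\subseteq M\cap\omega^\omega$; since $A\subseteq B$, it suffices to produce $z\in M$ with $b\leq^* z$ for all $b\in B$. As $B\in M$, elementarity gives that such a $z$ exists in $M$ exactly when $B$ is $\leq^*$-bounded in $V$, so the main obstacle is to arrange this boundedness. Exploiting the internal approachability implicit in the covering hypothesis (as per the remark following the lemma), write $M=\bigcup_{\alpha<\kappa}M_\alpha$ as an internally approaching chain and pick $\alpha<\kappa$ minimal with $A\subseteq M_\alpha$, setting $B:=M_\alpha\cap\omega^\omega\in M_{\alpha+1}\subseteq M$. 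The technical heart of the argument, following \cite[Proposition 4.3]{SNVHDL}, is then to exploit the non-domination of $g$ together with elementarity at the level of $M_{\alpha+1}$ to locate the required $\leq^*$-upper bound for $B$ inside $M_{\alpha+1}$, completing the contradiction.
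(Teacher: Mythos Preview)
Your reduction in Step~1 is fine: if some $z\in M\cap\omega^\omega$ satisfies $y_i<^*z$ for every $i<\lambda$, then on the infinite set $T_{y_i}\cap\{n:z(n)>y_i(n)\}$ you indeed get $(z\wedge g)(n)>(y_i\wedge g)(n)$, contradicting cofinality. The gap is in Step~2, where you try to produce such a $z$. The covering hypothesis yields $B\in M$ with $A\subseteq B\subseteq M$, but there is no reason for $B$ to be $\leq^*$-bounded: if $\mathfrak{b}<|M|$ then by elementarity $M$ contains an unbounded family of size $\mathfrak{b}$, and nothing prevents $B$ from containing it. The non-domination of $g$ only tells you $B$ is not $\leq^*$-\emph{cofinal}; it says nothing about $B$ being \emph{bounded}. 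Your appeal to internal approachability is also backwards: the remark after the lemma says that internal approachability \emph{implies} the covering hypothesis, not the converse, so you cannot extract a chain $\langle M_\alpha\rangle$ from the stated assumptions. And even granting such a chain, $M_\alpha\cap\omega^\omega$ need not be bounded either.

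The paper's argument avoids this entirely by asking for much less of $z$: since $g$ witnesses that $B$ is not $\leq^*$-cofinal, elementarity gives $z\in M$ (which may be taken increasing) that is not $\leq^*$-below any element of $B$. One then fixes $y\in A$ with $g\wedge z\leq^* y\wedge g\leq y$ by cofinality, and the real work is a combinatorial step: from the infinite set $E=\{n:y(n)<z(n)\}\in M$ one builds an auxiliary function $a\in M$ (constant equal to $z(n_{i+1})$ on each block $(n_i,n_{i+1}]$ of $E$), and the failure of $a$ to dominate $g$ together with the monotonicity of $g$ forces infinitely many $n\in E$ with $g(n)>z(n)>y(n)$, contradicting $g\wedge z\leq^* y$. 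The point is that the contradiction comes from a single well-chosen $y$ and a derived element of $M$, not from dominating all of $A$ at once.
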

\begin{proof}
    Suppose not and fix $A\subseteq M\cap\omega^\omega$ with $|A|<|M|$ and $\{y\wedge g\mid y\in A\}$ cofinal in $(\{y\wedge g\mid y\in M\cap\omega^\omega\},<^*)$. 
    Then fix $B\in M$ such that $A\subseteq B\subseteq M\cap\omega^\omega$. 
    Then $B$ is not $<^*$-cofinal in $\omega^\omega$ since $g$ is not dominated by any element of $B$ so by elementarity we may fix $z\in M$ which is not dominated by any $y\in B$. 
    By changing $z$ if necessary, we assume that $z$ is increasing. 
    Since $A$ was chosen to be cofinal in $(\{y\wedge g\mid y\in M\cap\omega^\omega\},<^*)$, we now fix some $y\in A$ such that $g\wedge z\leq^* y\wedge z\leq y$. 
    Let $E\in[\omega]^\omega\cap M$ be $\{n\mid y(n)<z(n)\}$; note $E$ is infinite since $y\in A\subseteq B$ and $z$ is not $<^*$-dominated by any element of $B$. Let $n_0<n_1<\ldots$ enumerate $E$. 
    Let $a\in M\cap\omega^\omega$ be defined by $a(n)=z(n_{i+1})$ where $n_i< n\leq n_{i+1}$. 
    Since $g$ is not $<^*$-dominated by any element of $M$, there are infinitely many $i$ such that for some $n$ with $n_i<n\leq n_{i+1}$, $g(n)>a(n)$. 
    For each such $i$ and $n$,
    \[g(n_{i+1})\geq g(n)>a(n)= z(n_{i+1})>y(n_{i+1}).\]
    This contradicts that $g\wedge z\leq^* y$ as $g\wedge z(n_{i+1})=z(n_{i+1})>y(n_{i+1})$ for each such $i$. 
\end{proof} 

The following lemma is the last ingredient we will need. The main case of interest is when $g$ is the constant function $\omega$ so that we simply have a nontrivial $n$-coherent family. 
We need the strengthening for induction purposes. 
\begin{lemma} \label{last}
    Assume $\bigwedge_{1\leq i\leq n-1}w\diamondsuit(S^{i+1}_i)$. Suppose $M\prec H_\theta$ is such that $|M|=\aleph_n$ and for each $A\in [M]^{<\aleph_n}$ there is a $B\in M$ such that $|B|=|A|$ and $A\subseteq B\subseteq M$. 
    Suppose moreover that either 
    \begin{itemize}
        \item $g\in\omega^\omega$ is weakly increasing and not $<^*$-dominated by any element of $M\cap\omega^\omega$ or
        \item $g$ is the constant function $\omega$ and $\operatorname{cof}(M\cap \omega^\omega,<^*)=\aleph_n$.
    \end{itemize}
    Then there is an $n$-coherent family indexed by $M\cap \omega^\omega$ which is nontrivial below $g$.
\end{lemma}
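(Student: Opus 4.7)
My plan is to induct on $n$, handling the base case $n=1$ via Todor\v{c}evi\'c's Theorem \ref{walks_thm} and the inductive step via Lemma \ref{Jeff}.

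For $n=1$, the hypothesis gives $\operatorname{cof}(\{y \wedge g \mid y \in M \cap \omega^\omega\}, <^*) = \aleph_1$ --- directly when $g \equiv \omega$; by Lemma \ref{non} in the other case. I would fix a $<^*$-increasing $\langle f_\alpha \mid \alpha < \omega_1\rangle \subseteq M \cap \omega^\omega$ for which $\langle f_\alpha \wedge g\rangle$ is strictly $<^*$-increasing cofinal, set $A_\alpha := I_{f_\alpha \wedge g}$, apply Theorem \ref{walks_thm} to the strictly $\subseteq^*$-increasing chain $\langle A_\alpha\rangle$ to obtain $\langle B_\alpha\rangle$, and define $\Phi_{f_\alpha}$ to agree with $\chi_{B_\alpha}$ on $A_\alpha$ (extended arbitrarily to $I_{f_\alpha}$). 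Extending $\Phi$ to $y \in M \cap \omega^\omega$ via $\Phi_y := \Phi_{f_\alpha} \upharpoonright I_y$ for any $\alpha$ with $y \leq^* f_\alpha$ yields a $1$-coherent family; any trivialization $\Psi\colon I_g \to \mathbb{Z}$ would force $B := \Psi^{-1}(\{1\})$ to satisfy $B \cap A_\alpha =^* B_\alpha$ for all $\alpha$, contradicting the second clause of Theorem \ref{walks_thm}.

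For $n > 1$, I would build an internally approaching continuous chain $\langle M_\alpha \mid \alpha < \omega_n\rangle$ of elementary submodels of $H_\theta$ of size $\aleph_{n-1}$ with union $M$, arranging each $M_\alpha$ itself to be the union of an internally approaching chain of length $\omega_{n-1}$ so that the closure hypothesis of Lemma \ref{last} holds for $M_\alpha$. At every successor stage, arrange that $M_{\alpha+1}$ contains a weakly increasing $y_\alpha \in M$ for which $y_\alpha \wedge g$ is not $<^*$-dominated by any element of $M_\alpha$; this is possible because $\{z \wedge g \mid z \in M_\alpha\}$ has size $\aleph_{n-1}$ and therefore fails to be $<^*$-cofinal in $\{z \wedge g \mid z \in M\}$, whose cofinality is $\aleph_n$ by Lemma \ref{non} (or direct hypothesis when $g \equiv \omega$). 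Setting $A_\alpha := M_\alpha \cap \omega^\omega$, the inductive hypothesis applied to $(M_\alpha, y_\alpha \wedge g)$ in its first branch yields, for each $\alpha \in S^n_{n-1}$, an $(n-1)$-coherent family on $A_\alpha$ nontrivial below $y_\alpha \wedge g$. This verifies the third bullet of Lemma \ref{Jeff} with $S = S^n_{n-1}$ and the available $w\diamondsuit(S^n_{n-1})$, so Lemma \ref{Jeff} assembles the desired $n$-coherent family on $M \cap \omega^\omega$ nontrivial below $g$.

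The main technical obstacle I anticipate is the bookkeeping in building $\langle M_\alpha\rangle$: simultaneously ensuring the closure hypothesis inside each $M_\alpha$, absorbing the witnesses $y_\alpha$ at successor stages, maintaining continuity at limits, and covering $M$ at the top. These are standard demands in higher-gap submodel constructions, but confirming that $y_\alpha \wedge g$ is genuinely not $<^*$-dominated by $M_\alpha \cap \omega^\omega$ --- so that the first branch of the inductive hypothesis really applies to $(M_\alpha, y_\alpha \wedge g)$ --- is the linchpin of the argument.
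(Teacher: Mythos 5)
Your proposal is correct and follows essentially the same route as the paper: induct on $n$, handle the base case via Theorem \ref{walks_thm} (you inline what the paper delegates to Lemma \ref{nontriv_below}, plus the routine extension from the cofinal chain to all of $M\cap\omega^\omega$), and for $n>1$ build the continuous chain $\langle M_\alpha\mid\alpha<\omega_n\rangle$ with weakly increasing witnesses $y_\alpha\in M_{\alpha+1}$ supplied by Lemma \ref{non} (or the cofinality hypothesis when $g\equiv\omega$), then feed the inductively obtained $(n-1)$-coherent families nontrivial below $y_\alpha\wedge g$ into Lemma \ref{Jeff} with $S=S^n_{n-1}$. The point you flag as the linchpin --- that $y_\alpha\wedge g$ is genuinely undominated over $M_\alpha$ --- is exactly how the paper justifies the last bullet of its construction.
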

\begin{proof}
    By induction on $n$. 
    When $n=1$, the hypotheses on $M$ yield a cofinal $\omega_1$ sequence in $(M\cap\omega^\omega,<^*)$ so the lemma follows from Lemma \ref{nontriv_below}. 
    
    Suppose now $n>1$. Build an increasing continuous chain $\langle M_\alpha\mid \alpha<\omega_n\rangle$ of elementary substructures of $M$ such that 
    \begin{itemize}
        \item $|M_\alpha|=\aleph_{n-1}$;
        \item whenever $\alpha$ is a successor ordinal and $A\subseteq M_\alpha\cap\omega^\omega$ with $|A|<\aleph_{n-1}$ there is a $B\in M_\alpha$ with $|B|=|A|$ and $A\subseteq B\subseteq M_\alpha$;
        \item $\bigcup_\alpha M_\alpha=M$;
        \item for each $\alpha$ there is a $y_\alpha\in M_{\alpha+1}\cap\omega^\omega$ such that $g\wedge y_\alpha$ is not $<^*$-dominated by any element of $M_\alpha$. 
    \end{itemize}
    Note the last item is possible by Lemma \ref{non} if $g\in\omega^\omega$ or from the hypothesis that $\operatorname{cof}(M\cap \omega^\omega,<^*)=\aleph_n$ if $g$ is the constant function $\omega$.
    We verify the hypotheses of Lemma \ref{Jeff} for $A_\alpha=M_\alpha\cap \omega^\omega$ and $S=S^n_{n-1}$.
    
    The first and second items are immediate. For the third, by replacing $y_\alpha$ if necessary, we may assume that $y_\alpha$ is weakly increasing. The inductive hypothesis yields an $(n-1)$-coherent family indexed by $M_\alpha\cap\omega^\omega$ which is nontrivial below $g\wedge y_\alpha$.  
\end{proof}

We now complete the proof of Theorem \ref{weak_diamond_theorem}:
\begin{proof}
    Fix $M\prec H_\theta$ as in Lemma \ref{last} such that some cofinal subset of $(\omega^\omega,\leq)$ is contained in $M$. 
    Then there is a nontrivial $n$-coherent family indexed by $M\cap\omega^\omega$. 
    The proof is finished since derived limits may be computed along any cofinal suborder. 
\end{proof}

We now note that a similar proof shows the following result (see \cite[Theorem A(1)]{SNVHDL}). 
The motivation is that in both models where we know the derived limits of $\mathbf{A}$ simultaneously vanish, the same holds for the systems $\mathbf{A}[H]$ for any abelian group $H$ (see \cite[Theorem 7.7]{DAHDL} and \cite[Theorem 1.2]{ADLCM}). 
Theorem \ref{nonvanish_d_thm} implies the same cannot hold with a small continuum. 
\begin{theorem} \label{nonvanish_d_thm}
    Suppose $\mathfrak{d}=\aleph_n$ and $\langle G_i\mid i<\omega_n\rangle$ are nonzero abelian groups. 
    There is a nontrivial $\bigoplus_{i<\omega_n}G_i$-valued $n$-coherent family. 
\end{theorem}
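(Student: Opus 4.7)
Plan. The strategy is to mimic the proof of Theorem \ref{weak_diamond_theorem}, substituting for the weak diamond hypotheses the ``pointwise finite support'' property of elements of the direct sum $\bigoplus_{i<\omega_n}G_i$. The motivating observation is that even if each coordinate $\Phi^{(i)}$ of the family we build might in principle admit a coordinatewise trivialization, no single $\Xi$ valued in $\bigoplus_{i<\omega_n}G_i$ can trivialize all coordinates simultaneously, because the values $\Xi_{\vec z}(p)$ are required to have finite support as elements of the direct sum. This constraint will play the role that $w\diamondsuit$ played in Lemma \ref{Jeff}.

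The proof proceeds by induction on $n$, following the skeleton of Lemma \ref{last}. For $n=1$, use $\mathfrak{d}=\aleph_1$ to fix a $<^*$-cofinal chain $\langle f_\alpha : \alpha<\omega_1\rangle$, pick a nonzero $g_\alpha\in G_\alpha$ for each $\alpha$, and apply Theorem \ref{walks_thm} relative to the initial segment $\langle I_{f_\beta} : \beta<\alpha\rangle$ to produce a nontrivial $G_\alpha$-valued $1$-coherent family $\Theta^{(\alpha)}$ on $\{f_\beta:\beta<\alpha\}$ nontrivial below $f_\alpha$. With care one can arrange the supports of the $\Theta^{(\alpha)}$ so that $\Phi_{f_\beta}(p) = \sum_{\alpha\leq\beta}\Theta^{(\alpha)}_{f_\beta}(p)\cdot e_\alpha$ is pointwise finitely supported and hence a legitimate $\bigoplus_{i<\omega_1}G_i$-valued family; any trivialization projects coordinatewise to trivializations of each $\Theta^{(\alpha)}$, and the nontriviality below $f_\alpha$ of the $\alpha$-th coordinate, combined with the finite-support constraint, yields the contradiction.

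For the inductive step, assume the theorem for $n$ and take $\mathfrak{d}=\aleph_{n+1}$ with nonzero groups $\langle G_i : i<\omega_{n+1}\rangle$. Build an internally approaching chain $\langle M_\alpha : \alpha<\omega_{n+1}\rangle$ of elementary substructures with $|M_\alpha|=\aleph_n$ as in Lemma \ref{last}, and fix a partition $\{B_\alpha : \alpha\in S^{n+1}_n\}$ of $\omega_{n+1}$ into blocks of size $\aleph_n$. For each $\alpha\in S^{n+1}_n$, use Lemma \ref{non} to select $y_\alpha\in M_{\alpha+1}$ not $<^*$-dominated by $M_\alpha$, and apply the inductive hypothesis inside $M_{\alpha+1}$ (where $\mathfrak{d}=\aleph_n$ holds relatively, by the internally approaching structure) to obtain an $n$-coherent family $\Theta^\alpha$ valued in $\bigoplus_{i\in B_\alpha}G_i$ indexed by $M_\alpha\cap\omega^\omega$ and nontrivial below $y_\alpha$. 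Assemble these into $\Phi$ via a recursion analogous to Lemma \ref{Jeff}, always taking the ``$s{}^\smallfrown 1$''-type extension: at each $\alpha\in S^{n+1}_n$ fix a trivialization $\Xi$ of $\Phi\upharpoonright M_\alpha$ chosen to have zero $B_\alpha$-coordinate (possible since the $B_\alpha$-block of $\Phi\upharpoonright M_\alpha$ is arranged to be identically zero) and set $\Phi_{\vec x}=d[\Xi+\Theta^\alpha]_{\vec x}$ for new tuples $\vec x$, with $\Theta^\alpha$ extended by $0$.

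The hard part is the nontriviality of the assembled $\Phi$. Given a putative trivialization $\Xi^*$, projecting to the $B_\alpha$-coordinates and running the calculation of Proposition \ref{nontrivial_below_prop} almost produces a trivialization of $\Theta^\alpha$ below $y_\alpha$, but leaves behind a residual term $(\Xi^*)^{(B_\alpha)}\upharpoonright M_\alpha^n$ that in the $w\diamondsuit$ argument is cancelled by taking the difference of trivializations of two branches. Here there is only one branch, so the residual must be controlled by other means, and the resolution is to exploit that $\Xi^*_{\vec z}(p)\in\bigoplus_{i<\omega_{n+1}}G_i$ has pointwise finite support: only finitely many blocks $B_\alpha$ can contribute at each individual argument, while the coherence data and the elementary-substructure framework supply stationarily many $\alpha\in S^{n+1}_n$ at which one can arrange the residual to vanish via a pressing-down/pigeonhole argument. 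Making this ``residual-versus-finite-support'' balancing act rigorous---in particular, formalizing exactly which trivializations can be absorbed into $\Xi+\Theta^\alpha$ without spoiling the induction---is the principal technical hurdle and the essential departure from the $w\diamondsuit$-based proof.
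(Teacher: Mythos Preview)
Your approach is essentially the paper's---replacing weak diamond by the finite-support constraint on elements of $\bigoplus_i G_i$ is exactly the right idea, and your recursive construction matches Lemma~\ref{Jeff2}. The gap is precisely where you flag it, and the culprit is your choice of an \emph{arbitrary} partition $\{B_\alpha:\alpha\in S^{n+1}_n\}$. The paper instead uses \emph{consecutive} blocks $[\omega_n\alpha,\omega_n(\alpha+1))$, which forces $\Phi\upharpoonright A_\alpha$ (and hence the stage-$\alpha$ trivialization $\Psi^\alpha$) to take values in $\bigoplus_{i<\omega_n\alpha}G_i$. The nontriviality argument is then not pressing-down but plain club-closure: each $\Psi_{\vec x}$ in a putative global trivialization has countable, hence bounded, total support in $\omega_{n+1}$; the set of $\alpha$ with $\omega_n\alpha=\alpha$ closed under $\vec x\mapsto\sup(\text{support of the range of }\Psi_{\vec x})$ is a club and so meets $S$. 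At such $\alpha$ both $\Psi_{\vec x}$ and $\Psi^\alpha_{\vec x}$ (for $\vec x\in A_\alpha^n$) vanish on the fresh block $[\alpha,\alpha+\omega_n)$, and projecting $d\Psi_{\vec x^\frown y}=^*(-1)^n(\Psi^\alpha_{\vec x}+\Theta^\alpha_{\vec x})$ onto that block immediately yields a trivialization of $\Theta^\alpha$ below $y\wedge g$. With an unstructured partition there is no evident mechanism to arrange that $B_\alpha$ avoids the supports of all $\aleph_n$ many $\Psi_{\vec x}$ for $\vec x\in A_\alpha^n$ simultaneously, so the hurdle you name is real as stated.

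Separately, your $n=1$ case is harder than necessary: a single coordinate suffices. A nontrivial $G_0$-valued family from Lemma~\ref{nontriv_below} is automatically nontrivial as a $\bigoplus_{i<\omega_1}G_i$-valued family, since any trivialization projects to a $G_0$-valued one; there is no need to sum $\omega_1$ many families or to worry about arranging finite pointwise supports.
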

The proof is nearly identical, except that we need a replacement for Lemma \ref{Jeff} to avoid weak diamonds. 
The obvious replacement is true, though it requires a new trick using that $\bigoplus_{i<\omega_n}G_i$ as a whole is an incredibly large group yet every element has finite support. 

\begin{lemma} \label{Jeff2}
    Suppose $n\geq1$, $\langle G_i\mid i<\omega_{n+1}\rangle$ are nonzero abelian groups $\langle A_\alpha\mid\alpha<\omega_{n+1}\rangle$ is an increasing continuous chain of subsets of $\omega^\omega$, $S\subseteq\omega_{n+1}$ is stationary, and $g\in(\omega+1)^\omega$ are such that 
    \begin{itemize}
        \item $A_\alpha$ is closed under finite joins;
        \item $|A_\alpha|=\aleph_n$ for all $\alpha<\omega_{n+1}$;
        \item if $\alpha\in S$ and $\langle H_i\mid i<\omega_n\rangle$ are nonzero abelian groups, there is an $\bigoplus_{i<\omega_{n}}H_i$-valued $n$-coherent family $\Phi$ indexed by $A_\alpha$ and $y\in A_{\alpha+1}$ such that $\Phi$ is nontrivial below $y\wedge g$. 
    \end{itemize}
    Then there is an $\bigoplus_{i<\omega_{n+1}}G_i$-valued $(n+1)$-coherent family indexed by $\bigcup_\alpha A_\alpha$ which is nontrivial below $g$.
\end{lemma}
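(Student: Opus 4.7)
The plan is to mimic the proof of Lemma \ref{Jeff} but, instead of using weak diamond to select branches, to build a single family and exploit that every element of $\bigoplus_{i<\omega_{n+1}} G_i$ has finite support.  Fix a partition of $\omega_{n+1}$ into pairwise disjoint blocks $\langle B_\alpha \mid \alpha\in S\rangle$ each of cardinality $\aleph_n$.  For each $\alpha\in S$, apply the hypothesis with $\langle H_i\mid i<\omega_n\rangle$ an enumeration of $\langle G_j\mid j\in B_\alpha\rangle$ to obtain a $\bigoplus_{i\in B_\alpha}G_i$-valued $n$-coherent family $\Theta^\alpha$ indexed by $A_\alpha$ and $y_\alpha\in A_{\alpha+1}$ such that $\Theta^\alpha$ is nontrivial below $y_\alpha\wedge g$.

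We construct an $(n+1)$-coherent family $\Phi$ indexed by $\bigcup_\alpha A_\alpha$ by recursion on $\alpha$, starting from the constant-$0$ family and, at stages $\alpha\notin S$, extending via Proposition \ref{triv_ext_prop}.  At a stage $\alpha\in S$, observe that by induction $\Phi\upharpoonright A_\alpha$ is valued in the subgroup $H_\alpha := \bigoplus_{i\in \bigcup_{\beta<\alpha,\,\beta\in S}B_\beta}G_i$; applying Theorem \ref{gob_vanish} inside $H_\alpha$ yields a trivialization $\Psi^\alpha$ of $\Phi\upharpoonright A_\alpha$ valued in $H_\alpha$, so in particular $\pi_{B_\alpha}\Psi^\alpha = 0$.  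Now extend $\Phi$ to $A_{\alpha+1}$ by setting $\Phi_{\vec x}:=d(\Psi^\alpha+\Theta^\alpha)_{\vec x}$ for $\vec x\in A_{\alpha+1}^{n+1}\setminus A_\alpha^{n+1}$, with $\Psi^\alpha$ and $\Theta^\alpha$ extended by $0$ off their natural domains.  The identity $d^2=0$ together with $n$-coherence of $\Theta^\alpha$ on $A_\alpha^{n+1}$ keeps $\Phi$ $(n+1)$-coherent throughout.

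Suppose toward a contradiction that $\Psi^*$ trivializes $\Phi$ below $g$.  For each $\alpha$ let $S_\alpha\subseteq\omega_{n+1}$ collect all coordinates $i$ appearing in the (necessarily finite) support of some value $\Psi^*_{\vec y}(m,k)$ with $\vec y\in A_\alpha^n$ and $(m,k)\in I_{\bigwedge\vec y\wedge g}$.  Then $|S_\alpha|\leq \aleph_n$, and the map $\phi(\alpha):=\{\beta\in S\mid B_\beta\cap S_\alpha\neq\emptyset\}$ is $\subseteq$-increasing and continuous (inheriting continuity from the chain $\langle A_\alpha\rangle$) with $|\phi(\alpha)|\leq\aleph_n$.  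A standard closure argument then yields a club of $\alpha<\omega_{n+1}$ on which $\phi(\alpha)\subseteq\alpha$; intersecting with $S$ produces some $\alpha\in S$ with $B_\alpha\cap S_\alpha=\emptyset$, so $\pi_{B_\alpha}\Psi^*_{\vec y}=0$ for every $\vec y\in A_\alpha^n$.  Projecting the identity $d\Psi^*=^*\Phi$ onto the $B_\alpha$-coordinates at new indices of the form $\vec x'\tie y_\alpha$ with $\vec x'\in A_\alpha^n$, using $\pi_{B_\alpha}\Psi^\alpha=0$ and the fact that $d\Theta^\alpha_{\vec x'\tie y_\alpha}=(-1)^n\Theta^\alpha_{\vec x'}$ under the $0$-extension convention, one concludes that $\Upsilon_{\vec z}:=(-1)^n\pi_{B_\alpha}\Psi^*_{\vec z\tie y_\alpha}$ is an $(n-1)$-family satisfying $d\Upsilon_{\vec x'}=^*\Theta^\alpha_{\vec x'}$, with each $\Upsilon_{\vec z}$ defined on $I_{\bigwedge\vec z\wedge y_\alpha\wedge g}$; thus $(-1)^n\Upsilon$ trivializes $\Theta^\alpha$ below $y_\alpha\wedge g$, a contradiction.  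The main delicate step is the support/continuity argument isolating a stationary set of $\alpha\in S$ on which $\pi_{B_\alpha}\Psi^*\upharpoonright A_\alpha^n=0$; this is where the hypothesis that the coefficients form a direct sum of $\aleph_{n+1}$ nonzero groups enters substantively, replacing the weak diamond of Lemma \ref{Jeff}.
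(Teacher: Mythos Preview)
Your proof is correct and follows essentially the same approach as the paper. The only cosmetic difference is that the paper uses the canonical interval partition $[\omega_n\alpha,\omega_n(\alpha+1))$ in place of your arbitrary blocks $B_\alpha$, which lets the closure step be phrased more succinctly as ``fix $\alpha\in S$ with $\omega_n\alpha=\alpha$ and $\Psi_{\vec x}$ valued in $\bigoplus_{i<\alpha}G_i$ for all $\vec x\in A_\alpha^n$''; your $\phi(\alpha)\subseteq\alpha$ argument accomplishes the same thing.
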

\begin{proof}
    We define $\langle \Phi_{\vec x}\mid\vec x\in (A_\alpha)^{n+1}\rangle$ by recursion on $\alpha$ such that if $\vec x\in A_{\alpha}^{n+1}$ then $\Phi_{\vec x}$ takes values in $\bigoplus_{i<\omega_n\alpha}G_i$. 
    If $\alpha\not\in S$ then since $\Phi\upharpoonright A_\alpha$ is trivial by Theorem \ref{gob_vanish}, we extend $\Phi$ to $A_{\alpha+1}$ using Proposition \ref{triv_ext_prop}. 
    Suppose now that $\alpha\in S$ and let $\Psi^\alpha$ be a trivialization of $\langle\Phi_{\vec x}\mid \vec x\in (A_\alpha)^{n+1}\rangle$ such that each $\Psi^\alpha_{\vec y}$ takes values in $\bigoplus_{i<\omega_n\alpha}G_i$. 
    By hypothesis, let $\Theta^\alpha$ be a $\bigoplus_{\omega_n\alpha\leq i<\omega_{n}(\alpha+1)}G_i$-valued $n$-coherent family indexed by $A_\alpha$ which is nontrivial below $y\wedge g$ for some $y\in A_{\alpha+1}$. 
    For each $\vec x\in (A_{\alpha+1})^{n+1}\setminus (A_\alpha)^{n+1}$, let $\Phi_{\vec x}=d(\Psi^\alpha+\Theta^\alpha)_{\vec x}$, where we extend $\Psi^\alpha$ and $\Theta^\alpha$ to $(A_{\alpha+1})^n\setminus(A_\alpha)^n$ as the constant $0$ functions (note this extension of $\Theta$ is not coherent). 

    Suppose for contradiction that $\Phi$ were trivial below $g$ and let $\Psi$ be a trivialization of $\Phi$ below $g$. 
    By stationarity of $S$, we may fix $\alpha\in S$ such that $\omega_n\alpha=\alpha$ and for all $\vec x\in (A_{\alpha})^{n}$, $\Psi_{\vec x}$ takes values in $\bigoplus_{i<\alpha}G_i$. 
    Then whenever $\vec x\in (A_\alpha)^{n}$, 
    \[(-1)^{n}\Psi_{\vec x}+\sum_{i=0}^{n-1}(-1)^i\Psi_{(\vec x^i)^\frown y}=d\Psi_{\vec x^\frown y}=^*\Phi_{\vec x^\frown y}=(-1)^{n}(\Psi^\alpha_{\vec x}+\Theta^\alpha_{\vec x}).\]
    Since $\Psi_{\vec x}$ and $\Psi^\alpha_{\vec x}$ take values in $\bigoplus_{i<\alpha}G_i$, the restriction of the values of $(-1)^{n}\Psi_{\vec z^\frown y}$ to $\bigoplus_{\alpha\leq i<\alpha+\omega_n}G_i$ trivializes $\Theta^\alpha$ below $y\wedge g$, contradicting our choice of $\Theta^\alpha$.

\end{proof}
\section{Derived limits in the Mitchell model} \label{Mitchell_section}
In this section, we complete the analysis of derived limits in the Mitchell model from an inacessible cardinal to show the following:
\begin{theorem}
    In the Mitchell model, $\lim^n\mathbf{A}\neq0$ if and only if $n=0,2$. 
\end{theorem}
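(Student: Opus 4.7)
The plan is to treat the four cases $n=0$, $n=1$, $n=2$, and $n\geq 3$ in turn. The case $n=2$ is immediate from Corollary \ref{lim2_mitch}, and the case $n=0$ is trivial because the inverse limit $\lim^0\mathbf{A}$ always contains nonzero elements; e.g.\ fix a nonzero $a\in\mathbf{A}_{x_0}$ for some $x_0$ and extend trivially along the projection structure.

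For $n\geq 3$, I would use that $\mathfrak{d}=\aleph_2$ in the Mitchell model, as noted in the discussion preceding Corollary \ref{lim2_mitch}. Fix a $<^*$-cofinal $X\subseteq\omega^\omega$ with $|X|=\aleph_2$. Since derived limits can be computed along any cofinal suborder (as already used at the end of Theorem \ref{weak_diamond_theorem}), it suffices to trivialize arbitrary $n$-coherent families indexed by $X$; this follows from Theorem \ref{gob_vanish} because $|X|=\aleph_2<\aleph_n$.

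The case $n=1$ is the main remaining content and the main obstacle. The plan is to show that every $1$-coherent family $\Phi\in V[G]$ (where $G$ is $\mathbb{M}$-generic) is trivial. I would exploit the factorization of $\mathbb{M}$ as a regular subforcing of $\operatorname{Add}(\omega,\kappa)\times\mathbb{T}$, with $\mathbb{T}$ $\sigma$-closed in $V$: first reflect $\Phi$ to an intermediate model $V[G_\alpha]$ with $\alpha<\kappa$ of cofinality $\omega_1$ using the $\kappa$-cc; by Theorem \ref{gob_vanish} obtain a trivialization $\Psi_\alpha$ of $\Phi\upharpoonright(\omega^\omega\cap V[G_\alpha])$; and then glue the $\Psi_\alpha$ into a single $\Psi\in V[G]$ working for all of $\omega^\omega\cap V[G]$.

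The subtlety, and main obstacle, lies in the gluing. The introduction emphasizes that Cohen forcing does not trivialize any nontrivial $1$-coherent families, so mere genericity of the Cohen tail above $\alpha$ cannot select a uniform $\Psi$. The plan is to use the $\sigma$-closed portion of $\mathbb{M}$ (which adds no reals, but does add $\omega_1$-sequences of ground-model data capable of distinguishing trivializations) to pin down canonical representatives at successive stages, and then to run a bookkeeping argument verifying these representatives agree modulo finite on all newly added indices. The expected output is a sublemma asserting that in the Mitchell extension, any nontrivial $1$-coherent family would have to arise either entirely in some $V[G_\alpha]$ or be created in the Cohen step above, both of which are ruled out by the preservation results anticipated in Section \ref{pres_section}.
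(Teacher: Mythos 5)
Your treatment of $n=0$, $n=2$, and $n\geq3$ matches the paper (Corollary \ref{lim2_mitch} for $n=2$, and Theorem \ref{gob_vanish} applied to a cofinal index set of size $\mathfrak{d}=\aleph_2$ for $n\geq3$). The genuine gap is the case $n=1$, which is the entire content of the theorem, and your outline stops exactly where the work begins. The ``reflect to $V[G_\alpha]$, trivialize by Goblot, then glue'' plan is not carried out: the proposed dichotomy sublemma (that a nontrivial family must either live in some $V[G_\alpha]$ or be ``created in the Cohen step above'') is neither precise nor proved, and the results of Section \ref{pres_section} cannot close it, because they point in the opposite direction --- they say certain forcings do not \emph{add} trivializations to nontrivial families, which is a tool for preserving nonvanishing, not for manufacturing the single global trivialization $\Psi$ you need. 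The gluing problem you flag is precisely the difficulty of $\lim^1$, and no bookkeeping over the $\sigma$-closed part resolves it; indeed your argument never uses any large cardinal, whereas the paper's analysis of $\lim^1\mathbf{A}$ in the Mitchell model is carried out over a \emph{weakly compact} cardinal and uses weak compactness essentially.

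The paper's route is different and bypasses gluing altogether. Given an $\mathbb{M}$-name $\dot\Phi$ for a $1$-coherent family, by $\kappa$-cc one may take $\dot\Phi\subseteq H_\kappa$, choose $M\prec H_{\kappa^+}$ of size $\kappa$ with $M^{<\kappa}\subseteq M$ containing $\dot\Phi$, and use weak compactness to get $j\colon M\to N$ with critical point $\kappa$; the embedding lifts to $j\colon M[G]\to N[G][H]$. Then $j(\Phi)$ is a coherent family in $N[G][H]$ extending $\Phi$ to the reals of $N[G][H]$, in particular to a real that is Cohen-generic over $N[G]$ (all reals of Mitchell forcing are added on the Cohen coordinate). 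The strengthened preservation lemma (Lemma \ref{no_extend}: a nontrivial $1$-coherent family indexed by a countably $<^*$-directed set does not even \emph{extend} to a coherent family indexed by $A\cup\{c\}$ for a Cohen real $c$) is then applied in $N[G]$ in contrapositive form: since $\Phi$ does extend to such a real, it must already be trivial in $N[G]$, hence in $V[G]$. So the preservation-style lemma is indeed the engine, but only in combination with the elementary embedding supplying the extension --- the ingredient missing from your proposal.
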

Recall from Corollary \ref{lim2_mitch} that in the Mitchell model, $\lim^2\mathbf{A}\neq0$. 
Moreover, an easy application of Theorem \ref{gob_vanish} is that for $n\geq3$, $\lim^n\mathbf{A}=0$ in the Mitchell model. 
Thus, it remains to show that $\lim^1\mathbf{A}=0$ in the Mitchell model. 
We will fix for the remainder of this section an inaccessible cardinal $\kappa$. For $\alpha<\kappa$, let $\mathbb{P}_\alpha=\operatorname{Add}(\omega, \alpha)$ and let $\mathbb{F}_\alpha=\operatorname{Add}\left(\omega_1, 1\right)^{V^{\mathrm{P} \alpha}}$. 

The bulk of our analysis of the first derived limit in the Mitchell model comes from an analysis of Cohen forcing. 
We first prove the following:
\begin{lemma}
    Suppose $A\subseteq\omega^\omega$ is countably directed under $<^*$ and $\Phi=\langle\Phi_x\mid x\in A\rangle$ is a nontrivial $1$-coherent family. 
    Then $\Phi$ remains nontrivial after adding a Cohen real. 
    Moreover, $\Phi$ does not extend to a coherent family indexed by $A\cup\{c\}$ for $c$ the Cohen real. 
\end{lemma}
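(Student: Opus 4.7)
The plan is to prove the \emph{moreover} clause, from which the first sentence follows: by Proposition~\ref{nontrivial_below_prop}, extending $\Phi$ to a $1$-coherent family on $A \cup \{c\}$ is equivalent to $\Phi$ being $1$-trivial below $c$ in the sense of Definition~\ref{triv_below_def}, and any global trivialization of $\Phi$ in $V[c]$ restricts to such an extension.

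Assume toward contradiction that a condition $p \in \mathbb{C}$ forces $\dot\Psi \colon I_{\dot c} \to \mathbb{Z}$ with $\dot\Psi =^* \Phi_x$ on $I_{\dot c} \cap I_x$ for every $x \in A$. The goal is to construct, in $V$, a function $\Psi^* \colon \omega \times \omega \to \mathbb{Z}$ satisfying $\Psi^* =^* \Phi_x$ on $I_x$ for every $x \in A$, contradicting nontriviality. For each $x \in A$, let $\dot M_x$ name the least $M$ such that $\dot\Psi$ and $\Phi_x$ agree on $I_{\dot c} \cap I_x$ above level $M$. By ccc, $\dot M_x$ is decided by a countable maximal antichain $\mathcal{A}_x$ below $p$, with decided values $\{M_q : q \in \mathcal{A}_x\}$.

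The main construction uses the countability of $\mathbb{C}$: fix an enumeration $\langle s_k : k < \omega \rangle$ of $\mathbb{C}$ below $p$ and, for each $(n, m)$, set $\Psi^*(n, m)$ to be the integer decided by the first $s_k$ with $n \in \dom(s_k)$, $s_k(n) \geq m$, and $s_k$ deciding $\dot\Psi(n, m)$; density of decisions below any condition forcing $(n,m) \in I_{\dot c}$ ensures such $s_k$ exists. For the verification, fix $x \in A$. By maximality of $\mathcal{A}_x$, each $s_k$ is compatible with some $q \in \mathcal{A}_x$; a common refinement $r \leq s_k \wedge q$ simultaneously forces $\dot\Psi(n, m)$ to equal both the value $\Psi^*(n, m)$ witnessed by $s_k$ and the value $\Phi_x(n, m)$ witnessed by $q$, provided $n > M_q$. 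Since both are ground-model integers, $\Psi^*(n, m) = \Phi_x(n, m)$ in $V$ for all such $(n, m)$. Countable directedness of $A$ is then used to handle the exceptional set uniformly across $x \in A$: by passing from a countable family of witnesses in $A$ to a single dominating $y \in A$ and invoking coherence of $\Phi$, the agreement $\Psi^*(n, m) = \Phi_y(n, m)$ transfers to $\Psi^*(n, m) = \Phi_x(n, m)$ for cofinitely many $(n, m) \in I_x$.

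The main obstacle is controlling the error bounds $M_q$ as $(n, m)$ varies: the compatible antichain element $q$ attached to $(n, m)$ may change, and the decided $M_q$ need not be bounded, which would threaten to produce infinite error sets. Overcoming this requires either a careful choice of enumeration ensuring that only finitely many antichain elements from $\mathcal{A}_x$ are consulted in defining $\Psi^*$ on the tail of $I_x$, or a fusion-style reformulation in which the countable family $\mathcal{A}_x$ is packaged into a single density argument. The countable directedness of $A$ is indispensable in closing this bookkeeping: it allows the argument for each individual $x$ to be reduced, via a dominating $y \in A$ and coherence, to a setup where only a single antichain element effectively matters.
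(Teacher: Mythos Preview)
Your proposal has a genuine gap: you correctly identify the main obstacle---that the error bounds $M_q$ need not be uniformly bounded as the compatible antichain element $q$ varies with $(n,m)$---but you do not resolve it. The two fixes you suggest (a careful enumeration, or a fusion-style repackaging) are only gestures; neither is carried out, and it is not clear how either would actually bound the exceptional set for your definition of $\Psi^*$. As written, the argument shows only that for each $(n,m)\in I_x$ there is \emph{some} $q$ with $\Psi^*(n,m)=\Phi_x(n,m)$ provided $n>M_q$, which says nothing without control on $M_q$.

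The paper's proof sidesteps this difficulty with a much simpler device. Rather than introducing a full maximal antichain $\mathcal{A}_x$ for each $x$, one chooses for each $x\in A$ a \emph{single} condition $q_x\leq p$ and integer $n_x$ with $q_x\Vdash\dot\Psi=^{n_x}\Phi_x$. Since Cohen forcing is countable, the map $x\mapsto(q_x,n_x)$ has countable range; countable directedness of $(A,<^*)$ then gives a $<^*$-cofinal $B\subseteq A$ on which $(q_x,n_x)$ is constantly some $(q,n)$. Now for $x,y\in B$, any $m>\max(n,|q|)$, and any $i\leq x(m),y(m)$, extend $q$ to $r$ with $r(m)>\max(x(m),y(m))$; then $r\Vdash\Phi_x(m,i)=\dot\Psi(m,i)=\Phi_y(m,i)$, so these ground-model integers agree. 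The common values on $B$ assemble into a trivialization of $\Phi$. The pigeonhole step is the missing idea: by collapsing to a single $(q,n)$ on a cofinal set, the bookkeeping problem you flag never arises.
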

\begin{proof}
    Note that the ``moreover'' statement implies the original statement so it suffices to show that $\Phi$ does not extend to $A\cup\{c\}$. 
    Suppose not and fix $p$ forcing that $\Phi_c\colon I_c\to\mathbb{Z}$ satisfies $\Phi_c=^*\Phi_x$ for each $x\in A$. 
    For each $x\in A$, fix $q_x\leq p$ and $n_x<\omega$ such that $q_x\Vdash \Phi_x=^{n_x}\Psi_c$. 
    Since $A$ is countably directed under $<^*$ and Cohen forcing is countable, there is a condition $q$ and an $n<\omega$ such that $B=\{x\in A\mid q_x=q, n_x=n\}$ is cofinal in $(A,<^*)$. 
    We now note that if $x,y\in B$ then $\Phi_x=^{\max(n,|q|)}\Phi_y$. 
    Indeed, for any $m>\max(n,|q|)$ and $i\leq x(m),y(m)$, we may extend $q$ to some $r$ such that $r(m)>x(m),y(m)$. 
    Then
    \[r\Vdash\Phi_x(m,i)=\Phi_c(m,i)=\Phi_y(m,i).\]
    Then
    \[\Psi(m,i)=\left\{\begin{array}{cc}
         \Phi_x(m,i)&m>\max(n,|q|), \text{for some }x\in B, x(m)\geq i  \\
         0&\text{there is no such }x 
    \end{array}\right.\]
    trivializes $\Phi$. 
\end{proof}
The same proof shows that after forcing with $\operatorname{Add}(\omega,\omega)$, a nontrivial $1$-coherent family does not extend to any of the Cohen reals. Usual chain condition arguments then show the following:
\begin{lemma} \label{no_extend}
    Suppose $\lambda$ is any cardinal, $A\subseteq\omega^\omega$ is countably directed under $<^*$ and $\Phi=\langle\Phi_x\mid x\in A\rangle$ is a nontrivial $1$-coherent family. 
    After forcing with $\operatorname{Add}(\omega,\lambda)$, $\Phi$ remains nontrivial. 
    Moreover, $\Phi$ does not extend to a coherent family indexed by $A\cup\{c_\alpha\}$ for any $\alpha$, where $c_\alpha$ is the $\alpha$th Cohen real added. 
\end{lemma}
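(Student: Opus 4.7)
The plan is to reduce to the countable Cohen case established in the preceding lemma, using the countable chain condition of $\operatorname{Add}(\omega,\lambda)$ to localize a hypothetical name for an extension to a countable product of Cohen reals.

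First I would suppose for contradiction that, in some generic extension $V[G]$ by $\operatorname{Add}(\omega,\lambda)$, there is $\alpha<\lambda$ and a function $\Psi\colon I_{c_\alpha}\to\mathbb{Z}$ with $\Psi=^*\Phi_x$ for every $x\in A$. Fixing a name $\dot\Psi$ and a condition $p\in G$ forcing this, I would observe that, by ccc, for each pair $(n,i)\in\omega\times\omega$ the value of $\dot\Psi(n,i)$ is decided by a countable antichain in $\operatorname{Add}(\omega,\lambda)$. Taking the union of all these antichains together with $\operatorname{supp}(p)$ and $\{\alpha\}$, I obtain a countable set $X\subseteq\lambda$ such that $\alpha\in X$, $p\in\operatorname{Add}(\omega,X)$, and $\dot\Psi$ is an $\operatorname{Add}(\omega,X)$-name. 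Writing the factorization $\operatorname{Add}(\omega,\lambda)\cong\operatorname{Add}(\omega,X)\times\operatorname{Add}(\omega,\lambda\setminus X)$, any statement about $\dot\Psi$ and a ground-model object such as $\Phi_x$ is an $\operatorname{Add}(\omega,X)$-statement, so if any $r\leq p$ forces $\Phi_x=^{n_x}\dot\Psi$, then $r\upharpoonright X\in\operatorname{Add}(\omega,X)$ already forces it.

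Next, for each $x\in A$, I would pick $q_x\in\operatorname{Add}(\omega,X)$ with $q_x\leq p$ and some $n_x<\omega$ such that $q_x\Vdash_{\operatorname{Add}(\omega,X)}\Phi_x=^{n_x}\dot\Psi$. Crucially, $\operatorname{Add}(\omega,X)$ is countable, so the map $x\mapsto(q_x,n_x)$ takes only countably many values. Exactly the fiber argument from the earlier lemma then applies: if no fiber were $<^*$-cofinal in $A$, countable directedness would produce a single $x^\ast\in A$ dominating a witness of non-cofinality for every fiber, contradicting the fact that $x^\ast$ itself lies in the fiber $\{x\mid q_x=q_{x^\ast},\,n_x=n_{x^\ast}\}$. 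So there exist $(q,n)$ for which $B=\{x\in A\mid q_x=q,\,n_x=n\}$ is $<^*$-cofinal in $A$.

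Finally, I would repeat the endgame of the single-Cohen proof inside $\operatorname{Add}(\omega,X)$: given $x,y\in B$, for any $m>\max(n,|q|)$ and $i\leq\min(x(m),y(m))$, I extend $q$ within $\operatorname{Add}(\omega,X)$ (using $\alpha\in X$) to a condition $r$ that forces $c_\alpha(m)>\max(x(m),y(m))$; this condition then forces $\Phi_x(m,i)=\dot\Psi(m,i)=\Phi_y(m,i)$, and since both sides lie in $V$ the equality holds absolutely. Piecing together the values along $B$, I obtain a ground-model $\Psi^\ast\colon\omega\times\omega\to\mathbb{Z}$ that equals $\Phi_x$ modulo a finite error for every $x\in B$, hence for every $x\in A$ by cofinality, contradicting nontriviality of $\Phi$.

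The routine part is the chain condition reduction to a countable suborder; the only place that needs a bit of care is the factorization step that allows replacing $r$ by $r\upharpoonright X$, since this is what makes the fiber argument bite. Once that reduction is in place, the remainder is identical to the proof of the previous lemma.
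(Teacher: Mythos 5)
Your proposal is correct and is essentially the paper's own argument: the paper proves the single-Cohen (equivalently $\operatorname{Add}(\omega,\omega)$) case and then invokes ``usual chain condition arguments,'' which is exactly your ccc localization of $\dot\Psi$ to a countable subforcing $\operatorname{Add}(\omega,X)$ with $\alpha\in X$, followed by the same countable-directedness pigeonhole and extension-of-$q$ endgame. The only point left implicit, just as in the preceding lemma, is that the ``remains nontrivial'' clause follows from the ``moreover'' clause by restricting a putative trivialization in the extension to $I_{c_\alpha}$, which is worth a sentence in your write-up.
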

Using the standard fact that reals added in Mitchell forcing are all added on the Cohens coordinate, Lemma \ref{no_extend} readily implies that Mitchell forcing also does not add trivializations to any nontrivial $1$-coherent family. 
Our next goal is to prove the following Corollary:
\begin{corollary} \label{lim1mitchell}
    In the Mitchell model from an inaccessible $\kappa$, $\lim^1\mathbf{A}=0$.
\end{corollary}
The argument we give is due to C. Lambie-Hanson and refines the large cardinal used from a weakly compact cardinal to an inaccessible cardinal. 
\begin{proof}
    Suppose for simplicity that $\dot \Phi$ is a name for a $1$-coherent family. 
    We show that there is a condition $q$ forcing that $\dot\Phi$ is trivial. 
    First, by inacessibility of $\kappa$, there is a club $C\subseteq \kappa$ such that whenever $\alpha\in C$ has uncountable cofinality, $\dot\Phi$ reflects to an $\mathbb{M}_\alpha$ name $\dot\Phi_\alpha$ for a $1$-coherent family. 
    By Lemma \ref{no_extend}, for each such $\alpha$, $\dot\Phi_\alpha$ must name a trivial $1$-coherent family since it admits an extension to the next Cohen real and, since all reals added by Mitchell forcing are added on the Cohens coordinate, extends to a Cohen generic real in an extension by Cohen forcing. 
    
    For each $\alpha\in C$ of uncountable cofinality, pick a condition $q_\alpha\in\mathbb{M}_\alpha$ and a $\mathbb{P}_\alpha$ name $\dot x_\alpha$ such that $q_\alpha\Vdash\dot x_\alpha$ trivializes $\dot\Phi_\alpha$. 
    Since $\alpha$ has uncountable cofinality, there is a $\beta_\alpha<\alpha$ such that $q_\alpha\in\mathbb{M}_\beta$ and $\dot x_\alpha$ is a $\mathbb{P}_\beta$ name. 
    By Fodor's lemma, the function $\alpha\mapsto\beta_\alpha$ is constant with value $\beta^*$ on a stationary $S\subseteq C\cap\operatorname{cof}(\geq\omega_1)$. 
    By inaccessibility of $\kappa$, $|M_{\beta^*}|<\kappa$ and there are fewer than $\kappa$ many $\mathbb{P}_{\beta^*}$ names for real numbers. 
    In particular, the function $\alpha\mapsto(q_\alpha,\dot x_\alpha)$ is constant on a stationary $T\subseteq S$ with value $(q,\dot x)$. Then $q$ forces that $\dot x$ trivializes $\dot\Phi$, as desired. 
\end{proof}
\section{Preserving nontrivial $1$-coherent families} \label{pres_section}
Crucial to our analysis of the first derived limit in the Mitchell model was that Cohen forcing does not trivialize any nontrivial $1$-coherent families. In this section, we provide a few more examples of forcings that also do not trivialize any nontrivial $1$-coherent families. 
We do not know of any analogous results for $2$-coherent families or, more generally, for $n$-coherent families with $n\geq2$. 

\begin{proposition}
    The following classes of forcings do not add trivializations to nontrivial $1$-coherent families:
    \begin{enumerate}
        \item $\sigma$-distributive forcings
        \item strongly proper forcings
        \item forcings that are a union of strictly fewer than $\mathfrak{b}$ many linked sets
        \item forcings whose square is $\omega^\omega$-bounding
        \item Miller forcing
    \end{enumerate}
\end{proposition}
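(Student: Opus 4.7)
The plan is to adapt the proof of the Cohen preservation lemma above to each of the five classes, in each case starting from a name $\dot\Psi$ that $p$ forces to trivialize a nontrivial $1$-coherent family $\Phi$ indexed by a countably directed $A\subseteq\omega^\omega$ (and, where needed, cofinal in $\omega^\omega$, which is the case of interest). For each $x\in A$ fix $q_x\leq p$ and $n_x<\omega$ with $q_x\Vdash\Phi_x=^{n_x}\dot\Psi$. The task in every case is to find a cofinal $B\subseteq A$ on which the $q_x$'s are pairwise compatible and the $n_x$'s are bounded by some fixed $n$, so that for $x,y\in B$ a common extension of $q_x,q_y$ forces $\Phi_x=^n\Phi_y$; since this is a statement about ground model objects it holds in $V$, and the end-of-proof construction from the Cohen lemma then extracts $\Psi^*\in V$ trivializing $\Phi$.

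Case (1) is immediate since $\sigma$-distributive forcings add no new reals. For case (3), write $\mathbb{P}=\bigcup_{i<\lambda}L_i$ with $\lambda<\mathfrak{b}$ and each $L_i$ linked, and set $B_{i,n}=\{x\in A:q_x\in L_i,\,n_x=n\}$. If no $B_{i,n}$ were cofinal in $(A,<^*)$, choose for each $(i,n)$ an $f_{i,n}\in\omega^\omega$ not $<^*$-dominated by any member of $B_{i,n}$; the family $\{f_{i,n}:i<\lambda,\,n<\omega\}$ has size at most $\lambda<\mathfrak{b}$ and is therefore $<^*$-bounded by some $g\in\omega^\omega$, but any $x\in A$ with $g<^*x$ lies in some $B_{i_x,n_x}$ and contradicts the choice of $f_{i_x,n_x}$. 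Fix $(i,n)$ with $B_{i,n}$ cofinal; linkedness of $L_i$ gives compatibility of the relevant conditions and the Cohen construction succeeds.

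For case (2), fix a countable $M\prec H_\theta$ containing $\Phi, A, \dot\Psi, p$ and a strongly $(M,\mathbb{P})$-generic $q\leq p$. Strong genericity ensures that for every maximal antichain $D\in M$, $D\cap M$ is predense in $\mathbb{P}$ below $q$, so in any generic $G\ni q$ the unique element of $D\cap G$ lies in $M$; consequently every name in $M$ for a real is interpreted already by $G\cap M$ and $\dot\Psi[G]\in V[G\cap M]$. Since $\mathbb{P}\cap M$ is countable and atomless below $q$, $V[G\cap M]$ is a Cohen extension of $V$, and the Cohen lemma forces $\Phi$ to have already been trivializable in $V$. For case (4), take $G,H$ mutually $V$-generic and work in $V[G][H]$. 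The set $D_n=\{k:\Psi_G(n,k)\neq\Psi_H(n,k)\}$ is finite for each $n$ by running the Cohen-style argument inside $V[G][H]$ (both $\Psi_G,\Psi_H=^*\Phi_x$ for $x\in A$), so $d(n):=\max D_n+1$ is a function in $\omega^\omega\cap V[G][H]$. By $\omega^\omega$-boundedness of $\mathbb{P}^2$ there is $d^*\in V\cap\omega^\omega$ with $d\leq^* d^*$, and we may assume $d^*\in A$. For $n$ large and $k>d^*(n)$, $\Psi_G(n,k)=\Psi_H(n,k)\in V[G]\cap V[H]=V$, and an amalgamation argument shows $p\Vdash\dot\Psi(n,k)=v(n,k)$ for a fixed $v(n,k)\in V$. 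Defining $\Psi^*(n,k):=v(n,k)$ for $k>d^*(n)$ and $\Psi^*(n,k):=\Phi_{d^*}(n,k)$ for $k\leq d^*(n)$ and using coherence of $\Phi$ along the splitting $I_x=(I_x\cap I_{d^*})\sqcup(I_x\setminus I_{d^*})$ yields $\Psi^*\in V$ with $\Psi^*=^*\Phi_x$ for every $x\in A$.

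For case (5), the approach is a fusion argument on superperfect trees exploiting continuous reading of names: pass below $p$ to a condition for which $\dot\Psi$ is read continuously from the Miller generic, and then, using that Miller adds no dominating real, a diagonal fusion along a cofinal $<^*$-chain in $A$ produces a superperfect tree $T\leq p$ and a cofinal $B\subseteq A$ on which the errors $n_x$ are uniformly bounded; the Cohen-style extraction again yields $\Psi^*\in V$. I expect the main technical obstacle to be case (5): the other cases reduce cleanly to pigeonhole or amalgamation arguments, whereas Miller requires coordinating the continuous-reading data for uncountably many indices $x\in A$ into a single fusion sequence, and the interplay between continuous reading of $\dot\Psi$ and the pointwise structure of the $I_x$'s is where the real combinatorial work lies.
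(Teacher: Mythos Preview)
Cases (1)--(3) are correct and essentially match the paper; in (2) the paper just cites that strongly proper forcings add only Cohen reals, which your argument unpacks, and in (3) your contradiction argument is a valid way of cashing out the paper's appeal to the $\mathfrak{b}$-directedness of $\leq^*$.

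Case (4) has a genuine error: the claim that $D_n=\{k:\Psi_G(n,k)\neq\Psi_H(n,k)\}$ is finite for each $n$ does not follow. Knowing that $\Psi_G=^*\Psi_H$ on each $I_x$ says only that the disagreement set meets each $I_x$ finitely, and since $I_x\cap(\{n\}\times\omega)$ is itself finite for every $x$, this puts no bound on $|D_n|$; for instance $\Psi_G-\Psi_H$ could be supported entirely on $\{0\}\times\omega$ and still satisfy all the constraints. The paper argues differently: one shows $\psi_1=^k\psi_2$ for some fixed $k$, since otherwise there is $x\in V[G][H]$ recording disagreements in infinitely many columns $i$ at heights $\leq x(i)$, and such an $x$ cannot be $<^*$-bounded by any $y\in V$ (as $\psi_1=^*\psi_2$ on $I_y$), contradicting $\omega^\omega$-boundingness of $\mathbb{P}^2$. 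Then $\psi_1\upharpoonright((k,\infty)\times\omega)=\psi_2\upharpoonright((k,\infty)\times\omega)\in V[G]\cap V[H]=V$ trivializes $\Phi$. Your subsequent ``amalgamation argument'' is also problematic, since the region on which $\Psi_G$ and $\Psi_H$ agree depends on the specific pair $(G,H)$.

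Your case (5) is too schematic to be a proof, and the ingredients you name --- continuous reading of names and the non-dominating property of the Miller real --- are not the operative mechanisms in the paper. The paper's fusion produces a condition $p$ equipped at each splitting node $s$ with a single $f_s$ and strictly increasing $\langle n^s_i\rangle$ so that $p^{[s^\frown i]}\Vdash\dot\Psi=^{n^s_i}\Phi_{f_s}$ while $p^{[s^\frown j]}\Vdash\dot\Psi\neq^{n^s_i}\Phi_{f_s}$ for $j>i$. The engine driving this is a subclaim valid for \emph{any} forcing: below every condition $r$ there is an $f$ with $r\not\Vdash\dot\Psi=^n\Phi_f$ for every $n$, since otherwise a $<^*$-cofinal monochromatic set already trivializes $\Phi$ in $V$. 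Once $p$ is built, take $f\geq^*f_s$ for every $s$ and $q\leq p$ forcing $\dot\Psi=^n\Phi_f$; then $q$ forces $\dot\Psi=^{k_s}\Phi_{f_s}$ for suitable $k_s$ at each $s$, so $q$ can split only finitely often at each splitting node, which is impossible. This is a different shape of argument from the ``cofinal $B$ with bounded errors'' template you propose.
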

\begin{proof} We show each item in turn.
    \begin{enumerate}
        \item A trivialization is coded by a real.
        \item Every real added by a strongly proper forcing is (equivalent to) a Cohen real.
        \item Suppose $\Vdash \dot\psi$ is a trivialization for $\Phi$. 
        Let $\langle A_\alpha\mid \alpha<\kappa\rangle$ be a partition of $\mathbb{P}$ by linked sets with $\kappa<\mathfrak{b}$. 
        For each $x\in\omega^\omega$, pick $p_x,k_x$ such that $p_x\Vdash\dot\psi=^{k_x}\Phi_x$ and let $\alpha_x$ be such that $p_x\in A_{\alpha_x}$. 
        We color $\omega^\omega$ by 
        \[x\mapsto(k_x,\alpha_x).\]
        By the $\mathfrak{b}$-directedness of $\leq^*$, there is a $\leq^*$-cofinal $X$ and a $k$ such that 
        \begin{itemize}
            \item 
        $k_x=k$ for each $x\in X$ and 
        \item whenever $x,y\in X$, $\alpha_x=\alpha_y$; in particular, $p_x$ is compatible with $p_y$.
        \end{itemize}
        Then the function
        \[\psi(i,j)=\left\{\begin{array}{cc}
             \Phi_x(i,j)&i\geq k\text{ and there is some }x\in X\text{ with }j\leq x(i)  \\
             0&\text{otherwise} 
        \end{array}\right.\]
        trivializes $\Phi$; note that $\psi$ is well defined since if $x,y\in X$ then $\Phi_x=^k\Phi_y$.
        \item Suppose $\mathbb{P}^2$ is $\omega^\omega$ bounding and $\mathbb{P}$ adds a trivialization for $\Phi$. 
        Let $G\times H$ be generic for $\mathbb{P}^2$ and let $\psi_1$ and $\psi_2$ be trivializations for $\Phi$ added by $G$ and $H$ respectively. 
        Then for some $k$, $\psi_1=^k\psi_2$ as otherwise there is some $x$ such that for infinitely many $i$ and $j\leq x(i)$, $\psi_1(i,j)\neq\psi_2(i,j)$; such an $x$ cannot be bounded by any real in $V$. 
        In particular, $\psi_1\upharpoonright(k,\infty)\times\omega=\psi_2\upharpoonright(k,\infty)\times\omega\in V[G]\cap V[H]=V$ and trivializes $\Phi$.
        \item 
        Recall that Miller forcing consists of trees $T\subseteq\omega^<\omega$ such that every $\sigma\in T$ has an extension which splits infinitely often, ordered by containment. If $p$ is a Miller condition, we write $\operatorname{split}(p)$ to mean the set of splitting nodes in $p$. If $s$ is a node of $p$ we write $p^{[s]}$ to mean all the nodes of $p$ which are either an initial segment of $s$ or which end extend $s$; note that $p^{[s]}$ is a Miller condition extending $p$. 
        Suppose for simplicity that $\Vdash \dot \Psi$ is a trivialization of $\Phi$. 
    The key claim is the following:
    \begin{claim} \label{miller_pres_clm}
        There is a condition $p$, functions $\langle f_s\mid s\in\operatorname{split}(p)\rangle$, and numbers $\langle n^s_i\mid s\in\operatorname{split}(p),s^\frown i\in p\rangle$ such that 
        \begin{itemize}
            \item for each $s,i,$ and $j$, if $i<j$ then $n^s_i<n^s_j$;
            \item for each $s$ and $i$, 
        \[p^{[s^\frown i]}\Vdash\dot\Psi=^{n_i^s}\Phi_{f_s};\]
        \item for each $s$ and each $i<j$, 
        \[p^{[s^\frown j]}\Vdash\dot\Psi\neq^{n_i^s}\Phi_{f_s}.\]
        \end{itemize}
    \end{claim}
    We first note that the claim is sufficient. 
    Indeed, let $f$ be a $<^*$ upper bound for $\{f_s\mid s\in\operatorname{split}(p)\}$ and fix $q\leq p$ and $n$ such that $q\Vdash\dot\Psi=^n\Phi_f$. 
    For $s\in \operatorname{split}(p)$, let $k_s=\max(n,\min\{m\mid f_s\leq^mf\}, \min\{m\mid \Phi_{f_s}=^m\Phi_f\})$. 
    Then whenever $s\in\operatorname{split}(q)$, 
    \[q\Vdash \dot \Psi=^{k_s}\Phi_{f_s}\]
    since $q$ forces that for each $i\geq k$ and $j\leq f_s(k)$,
    \[\Psi(i,j)=\Phi_f(i,j)=\Phi_{f_s}(i,j).\]
    But this means that $q$ can split only finitely often at $s$ as $q$ is incompatible with each $p^{[s^\frown i]}$ satisfying $n^s_i> k_s$, a contradiction. 
    
    The proof of Claim \ref{miller_pres_clm} is, of course, a fusion argument. 
    We argue that for a single condition $q$ and $s\in\operatorname{split}(q)$, there is a condition extending $q$ in which $s$ is still a splitting node but there are $f_s$ and $\langle n_i^s\rangle$ as in Claim \ref{miller_pres_clm}. 
    We use the following general fact which holds for every forcing whatsoever:
    \begin{subclaim}
        For every condition $r$, there is an $f\in{}^\omega\omega$ such that for all $n<\omega$, 
        \[r\not\Vdash\dot\Psi=^n\Phi_f.\]
    \end{subclaim}
    \begin{proof}
        Suppose not and let $r$ be a counterexample. 
        For each $f\in{}^\omega\omega$ fix $n_f$ such that $r\Vdash\dot\Psi=^{n_f}\Phi_f$. 
        Since $(\omega^\omega,<^*)$ is countably directed, there is a $<^*$-cofinal set $A\subseteq\omega^\omega$ and an $n<\omega$ such that $n_f=n$ for each $f\in A$. 
        Then let 
        \[\Psi(i,j)=\left\{\begin{array}{cc}
             \Phi_f(i,j)&i>n \text{ and for some }f\in A\text{, }  f(i)\geq j\\
             0&\text{otherwise} 
        \end{array}\right.\]
        $\Psi$ trivializes $\Phi$ since for each $f\in\omega^\omega$, we may find $g\in A$ and $m<\omega$ such that $f<^mg$ so that
        whenever $i\geq m$, $j\geq m$, and $\Phi_f(i,j)=\Phi_g(i,j)$, $\Psi(i,j)=\Phi_f(i,j)$.
        
    \end{proof}
    Now, for each $i$ with $s^\frown i\in q$, fix an $f_i$ such that for all $n<\omega$, $q^{[s^\frown i]}\not\Vdash\dot\Psi=^n\Phi_f$. 
    Let $f_s$ be a $<^*$ upper bound for $\{f_i\mid s^\frown i\in q\}$. 
    Then whenever $s^\frown i\in q$ and $n<\omega$, $q^{[s^\frown i]}\not\Vdash \dot\Psi=^n\Phi_{f_s}$. 
    Now, recursively define $q_i\leq q^{[s^\frown i]}$ and $n_i$ as in the claim, first forcing that $\dot\Psi\neq^{\max_{j<i}n_j}\Phi_f$ and then using that $\dot\Psi$ is forced to be a trivialization of $\Phi$ to decide some $n_i$ with $\dot\Psi=^{n_i}\Phi_f$. 
    Then $\bigcup_{i\in\operatorname{succ}_q(s)}q_i$ is a condition extending $q$ which still has $s$ as a splitting node and $f_s$ and $\langle n_i\rangle$ satisfy the Claim \ref{miller_pres_clm}. 
    \end{enumerate}
\end{proof}
\section{Questions}
While we have shown that derived limits do not necessarily vanish in many models with small continuum, the motivational question still remains of whether derived limits can vanish with a small continuum. 
One specific instance is the following:
\begin{question}
    Does $\mathfrak{d}=\aleph_2$ imply $\lim^2\mathbf{A}\neq0$?
\end{question}
The more general instance is
\begin{question}
    Does $\lim^n\mathbf{A}=0$ for all $n>0$ imply $\mathfrak{d}\geq\aleph_{\omega+1}$?
\end{question}
The analysis in Section \ref{Mitchell_section} of the first derived limit in the Mitchell model depended on a strong form of the fact that Cohen forcing does not trivialize any nontrivial $1$-coherent families. 
Whether the same is true for higher $n$ is unclear. 

\begin{question} \label{Cohen_no_2_quest}
    Can Cohen forcing add a trivialization to a nontrivial $2$-coherent family?
\end{question}
Much more generally than Question \ref{Cohen_no_2_quest}, we may ask whether any forcing can preserve nontrivial $2$-coherent families. 
Of course, forcings which add no new sets of reals cannot possibly add a trivialization, and by Theorem \ref{gob_vanish}, forcings which collapse $\mathfrak{d}$ to $\aleph_1$ trivialize every $2$-coherent family. 
\begin{question}
    Are there any forcings which add new sets of reals but do not trivialize any nontrivial $2$-coherent families?
\end{question}
Our next question is about the first derived limit in the Miller model. 
Obtaining models with a nonvanishing first derived limit with a large value of $\mathfrak{d}$ is a rather difficult task; the only model we know of where $\mathfrak{d}>\aleph_1$ but $\lim^1\mathbf{A}\neq0$ is the model of $MA$ produced by Todor\v{c}evi\'c in \cite{CFS}. 
\begin{question}
    Does $\lim^1\mathbf{A}=0$ in the Miller model?
\end{question}

\end{document}